%% LyX 2.4.1 created this file.  For more info, see https://www.lyx.org/.
%% Do not edit unless you really know what you are doing.
\documentclass[reqno]{amsart}
\usepackage[utf8]{inputenc}
\synctex=-1
\usepackage{color}
\usepackage{url}
\usepackage{enumitem}
\usepackage{amstext}
\usepackage{amsthm}
\usepackage{amssymb}
\usepackage{esint}
\usepackage[unicode=true,pdfusetitle,
 bookmarks=true,bookmarksnumbered=false,bookmarksopen=false,
 breaklinks=false,pdfborder={0 0 0},pdfborderstyle={},backref=false,colorlinks=true]
 {hyperref}
\hypersetup{
 pdfborderstyle=,pdfborderstyle={}}

\makeatletter
%%%%%%%%%%%%%%%%%%%%%%%%%%%%%% Textclass specific LaTeX commands.
\numberwithin{equation}{section}
\numberwithin{figure}{section}
      % auxiliary length 
\theoremstyle{plain}
\newtheorem{thm}{\protect\theoremname}
\theoremstyle{definition}
\newtheorem{defn}[thm]{\protect\definitionname}
\theoremstyle{plain}
\newtheorem{lem}[thm]{\protect\lemmaname}
\theoremstyle{remark}
\newtheorem{rem}[thm]{\protect\remarkname}
\theoremstyle{plain}
\newtheorem{cor}[thm]{\protect\corollaryname}

%%%%%%%%%%%%%%%%%%%%%%%%%%%%%% User specified LaTeX commands.
\usepackage{pdfsync}
\usepackage{graphics}
\usepackage{epstopdf}
\usepackage[all]{xy}
\usepackage{amscd}
\usepackage{enumitem}
\usepackage{mathtools}
\usepackage{stmaryrd}
\usepackage[utf8]{inputenc}
\pdfoutput=1
\setlist[enumerate]{leftmargin=*,label=(\roman*),align=left}

\makeatletter
\@namedef{subjclassname@2020}{%
  \textup{2020} Mathematics Subject Classification}
\makeatother

%%%%% Paolo's macro %%%%%%%

\newcommand{\xyR}[1]{ \makeatletter
\xydef@\xymatrixrowsep@{#1} \makeatother} % end of \xyR
% command \xyC to have more space between colums in diagrams
\newcommand{\xyC}[1]{ \makeatletter
\xydef@\xymatrixcolsep@{#1} \makeatother} % end of \xyC
\entrymodifiers={++[ ][F]} % to have more space around objects in diagrams

                                                     % ARROWS %
\newcommand{\ra}{\longrightarrow}
 % extendible arrow
 % extensible arrow in displaystyle

    % MATHBBB %
\newcommand{\field}[1]{\mathbb{#1}}
\newcommand{\R}{\field{R}} % reals
\newcommand{\N}{\field{N}} % naturals
\newcommand{\CC}{\field{C}} % naturals
 % integers
 % rationals

% MATHCAL %

                                            % MATHEMATICAL SIMBOLS %

\newcommand{\eps}{\varepsilon} % for the sake of brevity only
\renewcommand{\phi}{\varphi}
\newcommand{\diff}[1]{\ifmmode\mathchoice{\hbox{\rm d}#1}  % displaystyle
 {\hbox{\rm d}#1}  % normal 
 {\scalebox{0.75}{$\hbox{\rm d}#1$}}  % scriptstyle 
 {\scalebox{0.35}{$\hbox{\rm d}#1$}}  % scriptscriptstyle
 \fi} % dt,dx,... for integrals
  % to leave horizontal space
 % for the sake of brevity

\newcommand{\abs}[2][\empty]{\ifx#1\empty\left|#2\right|%
\else#1\vert #2 #1\vert\fi}% optional arg=size
%\newcommand{\norm}[1]{\left\lVert#1\right\rVert}

                                                 % CATEGORIES AND FUNCTORS %

\newcommand{\Coo}{\mbox{\ensuremath{\mathcal{C}}}^{\infty}} % C infinity

     % COLOMBEAU THEORY %

%GENERALIZED NUMBERS%

\newcommand{\Rtil}{\widetilde \R} % real Colombeau generalized number
\newcommand{\Ctil}{\widetilde \CC} % complex Colombeau generalized number
\newcommand{\Ktil}{\widetilde{\field{K}}} % Colombeau generalized number in the field K
\newcommand{\st}[1]{{#1^\circ}} % standard part map
 % compactly supported points in Omega
 % special CA with roman s
 % thickening
\newcommand{\sint}[1]{\langle#1\rangle} % strong internal set
\newcommand{\Eball}{B^{{\scriptscriptstyle \text{\rm E}}}} % ordinary Euclidean ball
\newcommand{\nrst}[1]{{#1}^\bullet} % near standard points in #1

\newcommand{\frontRise}[2]{\ifmmode\mathchoice{{\vphantom{#1}}^{\scalebox{0.6}{$#2$}}}  % displaystyle
 {{\vphantom{#1}}^{\scalebox{0.56}{$#2$}}}  % normal 
 {{\vphantom{#1}}^{\scalebox{0.47}{$#2$}}}  % scriptstyle 
 {{\vphantom{#1}}^{\scalebox{0.35}{$#2$}}}\fi} % scriptscriptstyle 
\newcommand{\RCreal}[1]{\frontRise{\R}{#1}\Rtil}
\newcommand{\RCcomplex}[1]{\frontRise{\CC}{#1}\Ctil}
\newcommand{\RCrealrho}{\RCreal{\rho}}
\newcommand{\RCcomplexrho}{\RCcomplex{\rho}}
\newcommand{\RCKrho}{\frontRise{K}{\rho}\Ktil}
\newcommand{\realpart}{\text{\rm Re}}
\newcommand{\impart}{\text{\rm Im}}
\newcommand{\hyperN}[1]{	\frontRise{\N}{#1}\widetilde{\N}}
\newcommand{\hypNr}{\hyperN{\rho}}
\newcommand{\hypNs}{\hyperN{\sigma}}
\newcommand{\frontRiseDown}[3]{\ifmmode\mathchoice{{\vphantom{#1}}^{\scalebox{0.6}{$#2$}}_{\scalebox{0.6}{$#3$}}}  % displaystyle
 {{\vphantom{#1}}^{\scalebox{0.56}{$#2$}}_{\scalebox{0.56}{$#3$}}}  % normal 
 {{\vphantom{#1}}^{\scalebox{0.47}{$#2$}}_{\scalebox{0.47}{$#3$}}}  % scriptstyle 
 {{\vphantom{#1}}^{\scalebox{0.35}{$#2$}}_{\scalebox{0.35}{$#3$}}}\fi} % scriptscriptstyle 

\newcommand{\RCcomplexud}[2]{\frontRiseDown{\CC}{#1}{#2}\Ctil}

%HYPERLIMIT%
\newcommand{\hyperlimfarg}[3]{\mathchoice{\frontRise{\lim}{\raisebox{-0.05em}{$#1\hspace{-0.67em}$}}\lim_{#3\xrightarrow{#2} 0\,}}
	{\frontRise{\lim}{#1\hspace{-0.25em}}\lim_{#3\rightarrow{#2}\,}}
	{\frontRise{\lim}{#1\hspace{-0.25em}}\lim_{#3\rightarrow{#2}\,}}
	{\frontRise{\lim}{#1\hspace{-0.25em}}\lim_{#3\rightarrow{#2}\,}}}
\newcommand{\hyperlimf}[2]{\hyperlimfarg{#1}{#2}{h}}

%NEW HYPERLIMIT%
\newcommand{\hyplimfarg}[3]{\mathchoice{\frontRise{\lim}{\raisebox{-0.05em}{$#1\hspace{-0.67em}$}}\lim_{ #2\rightarrow {#3}\,}}
	{\frontRise{\lim}{#1\hspace{-0.25em}}\lim_{#2 \rightarrow #3\,}}
	{\frontRise{\lim}{#1\hspace{-0.25em}}\lim_{#2 \rightarrow #3\,}}
	{\frontRise{\lim}{#1\hspace{-0.25em}}\lim_{#2 \rightarrow #3\,}}}
\newcommand{\hyplimf}[2]{\hyplimfarg{#1}{#2}{0}}

%GENERALIZED SMOOTH FUNCTION%
\newcommand{\gsf}{\frontRise{\mathcal{G}}{\rho}\mathcal{GC}^{\infty}}

\newcommand{\ghf}{\frontRise{\mathcal{G}}{\rho}\mathcal{GH}}

\newcommand{\hypersumarg}[3]{\mathchoice{\frontRise{\sum}{\raisebox{-0.2em}{$#1\hspace{-0.67em}$}}\sum_{#3\in \hyperN{#2}\,}}
{\frontRise{\sum}{#1\hspace{-0.25em}}\sum_{#3\in \hyperN{#2}\,}}
{\frontRise{\sum}{#1\hspace{-0.25em}}\sum_{#3\in \hyperN{#2}\,}}
{\frontRise{\sum}{#1\hspace{-0.25em}}\sum_{#3\in \hyperN{#2}\,}}}

\newcommand{\hypersum}[2]{\hypersumarg{#1}{#2}{n}}

\newcommand{\subzero}{\subseteq_{0}}

\newcommand{\sbpt}[1]{#1_{\text{\rm s}}}

                                              % LOGICAL SYMBOLS %

 % marks with smth over. Usage: \ptind^{...}
 % implication
 % :iff by definition
 % iff: by definition

\newcommand{\bigO}{\mathcal{O}}

\makeatother

\providecommand{\corollaryname}{Corollary}
\providecommand{\definitionname}{Definition}
\providecommand{\lemmaname}{Lemma}
\providecommand{\remarkname}{Remark}
\providecommand{\theoremname}{Theorem}

\begin{document}
\title{Dirac delta as a generalized holomorphic function}
\author{Sekar Nugraheni \and Paolo Giordano}
\address{\textsc{Sekar Nugraheni \newline Faculty of Mathematics, University
of Vienna, Austria \newline Faculty of Mathematics and Natural Sciences,
Universitas Gadjah Mada, Indonesia}}
\email{sekar.nugraheni@ugm.ac.id}
\address{\textsc{Paolo Giordano \newline Faculty of Mathematics, University
of Vienna, Austria}}
\email{paolo.giordano@univie.ac.at}
\subjclass[2020]{46F-XX, 46F30, 26E30}
\keywords{Nonlinear analysis of generalized functions, generalized functions
of a complex variable, non-Archimedean analysis, distributional Cauchy-Kowalevski
theorem, Colombeau generalized functions.}
\begin{abstract}
The definition of a non-trivial space of generalized functions of
a complex variable allowing to consider derivatives of continuous
functions is a non-obvious task, e.g.~because of Morera theorem,
because distributional Cauchy-Riemann equations implies holomorphicity
and of course because including Dirac delta seems incompatible with
the identity theorem. Surprisingly, these results can be achieved
if we consider a suitable non-Archimedean extension of the complex
field, i.e.~a ring where infinitesimal and infinite numbers return
to be available. In this first paper, we set the definition of generalized
holomorphic function and prove the extension of several classical
theorems, such as Cauchy-Riemann equations, Goursat, Looman-Menchoff
and Montel theorems, generalized differentiability implies smoothness,
intrinsic embedding of compactly supported distributions, closure
with respect to composition and hence non-linear operations on these
generalized functions. The theory hence addresses several limitations
of Colombeau theory of generalized holomorphic functions. The final
aim of this series of papers is to prove the Cauchy-Kowalevski theorem
including also distributional PDE or singular boundary conditions
and nonlinear operations.
\end{abstract}

\maketitle

\section{Introduction}

The extension of at least some Schwartz distributions from the real
field to the complex plane has been (sometimes informally) attempted
by several authors, because of its uses in quantum physics, \cite{Dav,Bag,Brewster2018,Lin},
in the study of random processes, \cite{Sma}, for physical wavelets,
\cite{Kai}, and in general relativity, \cite{FrPS}. However, these
efforts sometimes present mathematical drawbacks, as already pointed
out, e.g., by \cite{Win}.

From the purely mathematical point of view, the definition of a non-trivial
space of generalized functions of a complex variable that allows one
to consider derivatives of continuous functions is a non-obvious task,
because its solution must sidestep several impossibility theorems.
For example, if we want that these generalized functions embed ordinary
continuous maps defined on a domain $D\subseteq\mathbb{C}$ and, at
the same time, satisfy the Cauchy integral theorem, then these continuous
functions would also be path-independent and, from Morera's theorem,
they would actually be holomorphic functions, see e.g.~\cite{Zal}.
Likewise, if we want that these generalized functions satisfy the
Cauchy-Riemann equations (CRE), even with respect to distributional
derivatives, then necessarily the embedded continuous ones will actually
be, once again, ordinary holomorphic functions, see \cite{GrMo}.
Paraphrasing the language of \cite{Hor90} (see also \cite{KeGi24}),
we could say that the simplest solution of the problem to have derivatives
of continuous functions of a complex variable satisfying the Cauchy
theorem or the Cauchy-Riemann equations is the sheaf of holomorphic
functions itself, and a larger space seems not possible. Once again:
it seems impossible to extend the Dirac delta distribution from $\R$
to $\mathbb{C}$ and, at the same time, to prove a general identity
theorem for these \emph{generalized holomorphic functions} (GHF).
The problem is also related to nonlinear operations on spaces of distributions:
in fact, one of the peculiar property of holomorphic functions is
their expandability in Taylor series, and hence the converge of the
Cauchy product of these series seems a natural consequence, if we
think to generalize this expandability to GHF; however, nonlinear
operations, even involving only Heaviside function and Dirac delta
(assuming that this expandability can also be proved at least for
these GHF), collide with well-known impossibility theorems about nonlinear
operations on distributions, see e.g.~\cite{Colombeau1984b,Grosser2001}
and references therein.

On the other hand, a full theory of GHF would open the possibility
to generalize the Cauchy-Kowalevski theorem including also singular
(e.g.~using suitable Schwartz distributions) PDE or singular boundary
conditions and nonlinear (generalized holomorphic) operations. This
is the final aim of our study, which starts with the present paper.

It is rather surprising that all these impossibility results can be
avoided if we consider \emph{non-Archimedean extensions} $\RCcomplexrho\supseteq\mathbb{C}$
and $\RCrealrho\supseteq\R$ of the complex and real fields, i.e.~rings
$\RCcomplexrho$ and $\RCrealrho$ where infinitesimal and infinite
numbers return to be available. In fact, for our GHF (defined on an
open subset of $\RCcomplexrho$ and valued in this same ring) we can
prove the Cauchy integral theorem, the CRE, Morera's theorem, expandability
in Taylor hyperseries (i.e.~series extended over natural infinite
numbers, see Def.~\ref{def:hypernaturalNumbers} below), closure
with respect to composition, and hence nonlinear operations, and the
generalization of several other results of ordinary holomorphic functions.
On the other hand, in essentially every non-Archimedean theory, see
e.g.~\cite{Ehr} and references therein, the topology of interest
always see the set of all the infinitesimals as a clopen set, and
hence the corresponding non-Archimedean ring is disconnected (and
all its intervals as well). This property allows one to prove only
a weak form of the identity theorem for GHF and hence make it possible
to extend the Dirac delta from $\RCrealrho^{2}$ to $\RCcomplexrho$.
Any complex primitive of this extended Dirac delta can also be regarded
as an extension of the Heaviside function and, using the closure of
GHF with respect to composition and nonlinear operations, among GHF
we can include examples such as, e.g., $\delta^{a}\cdot H^{b}\circ\delta^{c}\cdot H^{d}$
for any $a$, $b$, $c$, $d\in\N=\{0,1,2,3,\ldots\}$. Moreover,
in our solution, every GHF is a set-theoretical map defined on a subset
of $\RCcomplexrho$ and valued in the same ring, and this solves the
informal use of generalized functions as one can find in physics and
engineer, \cite{BrIsGi}. Using a classical smoothing method, every
continuous map $f$ defined on a domain $\Omega\subseteq\mathbb{C}$
and satisfying the distributional CRE can be embedded as a GHF, but
if we assume to lose all the infinitesimal information, formally if
the regularized values are ordinary complex numbers for all $z\in\Omega$,
then the previous result of \cite{GrMo} about distributional CRE
implies that necessarily $f$ is a standard holomorphic function,
see Thm.~\ref{thm:emb2}. Informally, we can hence state that without
a suitable language of infinitesimal and infinite numbers, a theory
of GHF with meaningful examples and results is impossible, but in
an appropriate non-Archimedean setting, this theory is possible and
full of reach examples and results.

In this first paper, we introduce the rings $\RCrealrho$ and $\RCcomplexrho$,
the definition of GHF, and prove its first properties, we prove that
generalized $\RCcomplexrho$-differentiability implies smoothness,
the CRE, versions of Goursat, Looman-Menchoff, and Montel theorems,
and close with a list of meaningful examples, including all compactly
supported distributions, hence the aforementioned $\delta$ and its
relation with the one dimensional Dirac delta, and all the locally
integrable functions satisfying the distributional CRE. We also summarize
why our approach solves several technical problems of Colombeau setting:
intrinsic embedding of distributions preserving all $\delta$ derivatives,
closure with respect to composition, possibility to have generalized
functions defined in infinitesimal or infinite sets (hence solutions
of differential equations which are impossible for Colombeau generalized
functions (CGF)), and better Fourier transform applicable also to
non-tempered distributions.

In subsequent papers \cite{NuGi24a,NuGi24b}, already almost finished,
we will present the extension of classical properties of path integration
to GHF, and the expansion using hyper power series, i.e.~power series
where the summation is extended over infinite natural numbers of the
ring $\RCrealrho$.

The paper is self-contained in the sense that it contains all the
statements required for the proofs we are going to present. If proofs
of preliminaries are omitted, we clearly give references to where
they can be found. Therefore, to understand this paper, only a basic
knowledge of distribution theory is needed.

\subsection{\protect\label{subsec:The-Ring-ofRC}The Ring of Robinson-Colombeau
Numbers}

A sufficiently general rigorous mathematical theory of generalized
functions of a complex variable has already been developed within
Colombeau theory of generalized functions (see e.g.~\cite{Colombeau1984,Aragona2005,Oberguggenberger2007,Vernaeve2008,Aragona2012,Juriaans2020}
and references therein). Since the beginning, in that framework it
was natural to define a holomorphic generalized function using CRE,
see e.g.~\cite{Colombeau1984,Vernaeve2008,Aragona2005,Aragona2012}.
Indeed, the notion of partial derivatives (of any order) of a CGF
was already clear and fully compatible with distributional derivative;
therefore, using pointwise evaluation of CGF, one can define a complex
CGF of a complex variable starting from its real and imaginary parts
and asking that they satisfy the CRE. Even if an important results
such as the CRE and the existence of \emph{all} the derivatives are
directly taken in the definition, this initial approach probably appeared
more natural than considering the limit of the incremental ratio in
a \emph{ring} of scalars (see Sec.~\ref{subsec:The-Ring-ofRC} below)
with a topology usually managed through a non-Archimedean ultrametric.
On the other hand, already in \cite{Aragona2005}, thanks to the density
of invertible elements in the ring of scalars, a more general notion
of differentiability in the Colombeau setting, both for the complex
and the real case, is considered using a classical Newton quotient.
This allows \cite{Aragona2005} to show that all CGF are also differentiable
in this more classical way, but missed to extend the important classical
result that from a suitable notion of \emph{generalized} complex differentiability
the existence of all \emph{generalized} derivatives of greater order
follows, see also \cite{Vernaeve2008}.

In this article we want to define a GHF by using some kind of limit
of the incremental ratio, like for ordinary holomorphic functions,
without already starting from the CRE and also showing that first
order generalized complex differentiability implies the existence
of all greater derivatives. After our proof of the CRE for GHF, see
Thm.~\ref{thm:CRE}, we will be able to show that our approach to
GHF is more general than the classical Colombeau definition, and this
allows us to include examples such as the Dirac delta and the closure
with respect to composition.

In this section, we introduce the non-Archimedean ring of (real and
complex) scalars. For more details and proofs about the basic notions
introduced here, the reader can refer e.g.~to \cite{Giordano2021,Colombeau1984b,Giordano2015}.
As we will see better below, in order to accomplish our definition
of GHF, we introduce Colombeau generalized numbers by considering
an arbitrary asymptotic scale instead of the usual net $(\eps)$ used
in Colombeau theory (see also \cite{Marti1999} for a more general
notion of scale, and \cite{Giordano2016} and references therein for
a comparison).
\begin{defn}
\label{def:RCring}Let $\rho=(\rho_{\eps}):(0,1]\ra(0,1]=:I$ be a
net such that $(\rho_{\eps})\rightarrow0$ as $\eps\rightarrow0^{+}$
(in the following, such a net will be called a \textit{gauge} and
all the asymptotic relation will be for $\eps\to0^{+}$), then
\begin{enumerate}
\item We say that a net $(x_{\eps})\in\R^{I}$ is \textit{$\rho$-moderate},
and we write $(x_{\eps})\in\R_{\rho}$ if $\exists N\in\N:\ x_{\eps}=\bigO(\rho_{\eps}^{-N})$.
\item Let $(x_{\eps})$, $(y_{\eps})\in\R^{I}$, then we say that $(x_{\eps})\sim_{\rho}(y_{\eps})$,
and we read it saying that $(x_{\eps}-y_{\eps})$ is $\rho$-\emph{negligible},
if $\forall N\in\N:\ |x_{\eps}-y_{\eps}|=\bigO(\rho_{\eps}^{N})$.
This is a congruence relation on the ring $\R_{\rho}$ of moderate
nets with respect to pointwise operations, and we can hence define
\[
\RCrealrho:=\R_{\rho}{\large /\sim_{\rho},}
\]
which we call \textit{Robinson-Colombeau ring of generalized numbers}\textit{\emph{.}}
The corresponding equivalence classes are simply denoted by $x=[x_{\eps}]$
or $x=[x_{\eps}]_{\rho}$ in case we have to underscore the dependence
from the gauge $\rho$. In particular, $\diff\rho:=[\rho_{\eps}]\in\RCrealrho$.
\item If $\mathcal{P}(\eps)$ is a property of $\eps\in I$, we use the
notation $\forall^{0}\eps:\ \mathcal{P}(\eps)$ to denote $\exists\eps_{0}\in I\,\forall\eps\in(0,\eps_{0}]:\ \mathcal{P}(\eps)$.
We can read $\forall^{0}\eps$ as: ``\emph{for $\eps$ small''}.
\item Let $x$, $y\in\RCrealrho$. We write $x\leq y$ if for all representative
$[x_{\eps}]=x$, there exists $[y_{\eps}]=y$ such that $\forall^{0}\eps:\ x_{\eps}\leq y_{\eps}$.
\item We denote by $\RCrealrho_{>0}$ the set of positive invertible generalized
numbers. In general, we write $x<y$ to say that $x\le y$ and $x-y$
is invertible.
\item A generalized complex number can be written as $z=x+iy\in\RCcomplexrho$,
where $x$, $y\in\RCrealrho$ and $i$ is the imaginary unit. Note
that $\R\subseteq\RCrealrho$ and $\CC\subseteq\RCcomplexrho$ are
embedded using constant nets.
\end{enumerate}
\end{defn}

In $\RCrealrho$, we can easily define infinitesimal and infinite
numbers, which coincide with infinitesimal and infinite representatives
(this result does not hold in nonstandard analysis, see e.g.~\cite{CKKR}):
\begin{defn}
\label{def:nonArchNumbs}Let $z\in\RCcomplexrho$ be a generalized
number. Then
\begin{enumerate}
\item $z$ is \emph{infinitesimal} if $|z|\le r$ for all $r\in\R_{>0}$.
If $z=[z_{\eps}]$, this is equivalent to $\lim_{\eps\to0^{+}}z_{\eps}=0$.
We write $z\approx w$ if $z-w$ is infinitesimal, and $D_{\infty}:=\left\{ h\in\RCcomplexrho\mid h\approx0\right\} $
for the set of all infinitesimals in $\RCcomplexrho$.
\item $z$ is \emph{infinite} if $|z|\ge r$ for all $r\in\R_{>0}$. If
$z=[z_{\eps}]$, this is equivalent to $\lim_{\eps\to0^{+}}\left|z_{\eps}\right|=+\infty$.
\item $z$ is \emph{finite} if $|z|\le r$ for some $r\in\R_{>0}$.
\item $z$ is \emph{near-standard} if for some representative (and hence
for all) $[z_{\eps}]=z$, we have $\exists\lim_{\eps\to0^{+}}z_{\eps}=:\st{z}\in\CC$,
which is called the \emph{standard part} of $z$. If $U\subseteq\RCcomplexrho$,
the set of near-standard points in $U$ is $\nrst{U}:=\left\{ z\in U\mid\exists\st{z}\in U\right\} $.
\end{enumerate}
\end{defn}

\noindent For example, we have that $\diff{\rho}^{n}\in\RCreal{\rho}$,
$n\in\N_{>0}$, is an invertible infinitesimal, whose reciprocal is
$\diff{\rho}^{-n}=[\rho_{\eps}^{-n}]$, which is necessarily a positive
infinite number. Of course, in the ring $\RCreal{\rho}$ there exist
generalized numbers which are not in any of the three classes of Def.~\ref{def:nonArchNumbs},
like e.g.~$x_{\eps}=\frac{1}{\eps}\sin\left(\frac{1}{\eps}\right)$.
It is possible to prove that $\RCrealrho$ is the simplest (co-universal)
quotient ring having a prescribed element $\diff\rho$ and where every
representative of $0=[z_{\eps}]$ is an infinitesimal net: $\lim_{\eps\to0^{+}}z_{\eps}=0$,
see \cite{KeGi24}.

On $\RCrealrho^{n}$, we consider the natural extension of the Euclidean
norm, i.e. $|[x_{\eps}]|:=[|x_{\eps}|]\in\RCrealrho$. Even if this
generalized norm takes value in $\RCrealrho$, it shares essential
properties with classical norms, like the triangle inequality and
absolute homogeneity. It is therefore natural to consider on $\RCrealrho^{n}$
the topology generated by balls $B_{r}(x):=\left\{ y\in\RCrealrho\mid|x-y|<r\right\} $,
$r\in\RCrealrho_{>0}$, which is called \textit{sharp topology}. Note
that $\diff\rho\in\RCrealrho_{>0}$, and therefore we can also have
balls $B_{\diff\rho^{n}}(x)$ of infinitesimal radius. Similarly,
the absolute value (modulus) of a generalized complex number $z=[z_{\eps}]=[x_{\eps}+iy_{\eps}]=x+iy\in\RCcomplexrho$
is defined by $|z|:=|\left[z_{\eps}\right]|=\left[\left(x_{\eps}^{2}+y_{\eps}^{2}\right)^{\frac{1}{2}}\right]$
and takes value in $\RCrealrho$. Therefore, the topology of the set
of generalized complex numbers is the same as the sharp topology of
$\RCrealrho^{2}$.

The following result is useful in dealing with positive and invertible
generalized numbers, see \cite{Grosser2001,Giordano2021}.
\begin{lem}
\label{lem:mayer} Let $x\in\RCrealrho$. Then the following are equivalent:
\begin{enumerate}
\item \label{enu:positiveInvertible}$x$ is invertible and $x\ge0$, i.e.~$x>0$.
\item \label{enu:strictlyPositive}For each representative $(x_{\eps})\in\R_{\rho}$
of $x$ we have $\forall^{0}\eps:\ x_{\eps}>0$.
\item \label{enu:greater-i_epsTom}For each representative $(x_{\eps})\in\R_{\rho}$
of $x$ we have $\exists m\in\N\,\forall^{0}\eps:\ x_{\eps}>\rho_{\eps}^{m}$,
i.e.~$x>\diff\rho^{m}$.
\item \label{enu:There-exists-a}There exists a representative $(x_{\eps})\in\R_{\rho}$
of $x$ such that $\exists m\in\N\,\forall^{0}\eps:\ x_{\eps}>\rho_{\eps}^{m}$.
\end{enumerate}
\end{lem}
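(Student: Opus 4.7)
I organize the four conditions into the cycle
\[
\text{(iii)} \Longrightarrow \text{(iv)} \Longrightarrow \text{(iii)}, \qquad \text{(iii)} \Longrightarrow \text{(ii)} \Longrightarrow \text{(iv)}, \qquad \text{(iii)} \Longleftrightarrow \text{(i)},
\]
so that the key nontrivial steps are (ii)$\Rightarrow$(iv) and (i)$\Rightarrow$(ii); everything else is either definitional or a one-step perturbation by a negligible net. I use throughout the standard reformulation $x\ge 0 \iff$ there exists a representative $(\tilde x_\eps)$ of $x$ with $\tilde x_\eps\ge 0$ for $\eps$ small (which follows from Def.~\ref{def:RCring}.(iv) applied to the constant-$0$ representative of $0$).

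The routine implications go as follows. (iii)$\Rightarrow$(iv) is immediate. (iii)$\Rightarrow$(ii) is immediate since $\rho_\eps^m>0$. For (iv)$\Rightarrow$(iii), let $(x_\eps)$ be the representative supplied by (iv) with $x_\eps>\rho_\eps^m$ for $\eps$ small and let $(x'_\eps)$ be any other representative; then $|x'_\eps-x_\eps|\le\rho_\eps^{m+1}$ for $\eps$ small, so $x'_\eps>\rho_\eps^m-\rho_\eps^{m+1}>\rho_\eps^{m+1}$ eventually. For (iii)$\Rightarrow$(i), the net $y_\eps:=1/x_\eps$ where $x_\eps>0$ (and $0$ otherwise) is moderate since $|y_\eps|\le\rho_\eps^{-m}$ for $\eps$ small, and $xy=1$; positivity $x\ge 0$ is clear.

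For (i)$\Rightarrow$(ii), fix a representative $(x_\eps)$ of $x$ and a representative $(y_\eps)$ of $y=x^{-1}$. From $xy=1$ we get $x_\eps y_\eps\to 1$, hence $|x_\eps|\ge \tfrac{1}{2|y_\eps|}\ge \tfrac12\rho_\eps^{N}$ eventually, where $N$ is a moderation exponent of $y$. From $x\ge 0$ pick $(\tilde x_\eps)$ with $\tilde x_\eps\ge 0$ for $\eps$ small; then $x_\eps-\tilde x_\eps$ is negligible, so in particular $x_\eps\ge -\rho_\eps^{N+1}$ for $\eps$ small. If $x_\eps<0$ for arbitrarily small $\eps$, the lower bound gives $x_\eps\le -\tfrac12\rho_\eps^N$ there, yielding $\rho_\eps\ge \tfrac12$, a contradiction.

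The main obstacle is (ii)$\Rightarrow$(iv), and I plan to argue by contradiction using a diagonal modification of representatives. Assume (ii) holds but (iv) fails for some representative $(x_\eps)$: for every $m\in\N$ the set $\{\eps\in I : x_\eps\le\rho_\eps^m\}$ accumulates at $0$. A diagonal extraction produces a strictly decreasing sequence $\eps_k\to 0$ with $x_{\eps_k}\le\rho_{\eps_k}^{k}$ for every $k$. Define
\[
\tilde x_\eps := \begin{cases} -x_\eps & \text{if } \eps=\eps_k \text{ for some } k, \\ x_\eps & \text{otherwise.} \end{cases}
\]
Then $(\tilde x_\eps)$ is moderate, and the key verification is that $\tilde x_\eps-x_\eps$ is $\rho$-negligible: for any $N$, the difference is $0$ unless $\eps=\eps_k$, in which case $|\tilde x_{\eps_k}-x_{\eps_k}|=2x_{\eps_k}\le 2\rho_{\eps_k}^{k}\le 2\rho_\eps^{N}$ as soon as $k\ge N$, which is automatic for $\eps\le\eps_N$ by monotonicity of $(\eps_k)$. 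Hence $(\tilde x_\eps)$ is another representative of $x$, but $\tilde x_{\eps_k}\le 0$ along $\eps_k\to 0$, contradicting (ii) applied to $(\tilde x_\eps)$. The delicate point is precisely this sign-flip construction: one must choose the accumulation sequence $(\eps_k)$ so that the diagonal bound $x_{\eps_k}\le \rho_{\eps_k}^k$ is strong enough to beat every negligibility threshold $\rho_\eps^N$, which is what allows the modified net to remain in the same equivalence class while violating (ii).
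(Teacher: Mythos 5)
The paper does not prove this lemma; it is stated as a known preliminary with a pointer to the literature (\cite{Grosser2001,Giordano2021}), so there is no in-paper proof to compare against. Your argument is correct and is essentially the standard one: the only genuinely nontrivial implication is (ii)$\Rightarrow$(iv), and your diagonal extraction of a null sequence $(\eps_k)$ with $x_{\eps_k}\le\rho_{\eps_k}^{k}$, followed by a modification of the representative on that sequence, is exactly the right mechanism. One small point you should make explicit: the identity $|\tilde x_{\eps_k}-x_{\eps_k}|=2x_{\eps_k}$ (and hence the negligibility estimate $\le 2\rho_{\eps_k}^{k}$) silently uses $x_{\eps_k}\ge 0$, which is not part of the diagonal bound but is supplied by hypothesis (ii) applied to the original representative for $\eps_k$ small; with that observation the sign-flipped net is indeed a representative of $x$ that is negative cofinally, contradicting (ii). The remaining implications are routine perturbation arguments as you describe (in (i)$\Rightarrow$(ii) the moderateness constant $C$ in $|y_\eps|\le C\rho_\eps^{-N}$ should be absorbed by passing to $N+1$, but this changes nothing).
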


\noindent A consequence of this lemma is that the sharp topology is
generated by all the infinitesimal balls of the type $B_{\diff\rho^{q}}(z)$
for any $q\in\N$ and $z\in\RCcomplexrho$, and that $x<y$ is equivalent
to $x_{\eps}<y_{\eps}$ for $\eps$ small and for all representatives
$[x_{\eps}]=x$ and $[y_{\eps}]=y$.
\begin{lem}
\label{lem:Invertible-generalized-complex}Every $z\in\RCcomplexrho$
is invertible if and only if $|z|$ is invertible. Moreover, invertible
generalized complex numbers are dense in $\RCcomplexrho$.
\end{lem}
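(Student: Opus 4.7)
The plan for the first equivalence uses the multiplicativity $|z_{1}z_{2}| = |z_{1}|\,|z_{2}|$, which lifts from $\mathbb{C}$ to representatives. If $z$ is invertible in $\RCcomplexrho$ with inverse $w$, then $1 = |1| = |zw| = |z|\,|w|$, so $|z|$ is invertible in $\RCrealrho$ with inverse $|w|$. Conversely, if $|z|$ is invertible then so is $|z|^{2} = |z|\cdot|z|$, and the identity $z\bar{z} = |z|^{2}$ (where $\bar{z} := x - iy$ whenever $z = x + iy$ with $x,y \in \RCrealrho$, which is well defined on representatives) provides the explicit two-sided inverse $z^{-1} = \bar{z}/|z|^{2}$ since $z\cdot\bar{z}/|z|^{2} = |z|^{2}/|z|^{2} = 1$.

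For density, I would first reduce the statement to the following claim: for every $z \in \RCcomplexrho$ and every $n \in \N$, there exists an invertible $w \in \RCcomplexrho$ with $|z - w| \leq 2\diff\rho^{n}$. Once this is available, given any $r \in \RCrealrho_{>0}$, Lemma~\ref{lem:mayer}(iii) yields some $m \in \N$ with $r > \diff\rho^{m}$, and since $\diff\rho$ is infinitesimal, choosing $n = m+1$ gives $2\diff\rho^{n} < \diff\rho^{m} < r$, so $w$ lies in the sharp ball $B_{r}(z)$.

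To construct $w$, I would fix any representative $(z_{\eps})$ of $z$ and perform an $\eps$-wise truncation: set $w_{\eps} := z_{\eps}$ if $|z_{\eps}| \geq \rho_{\eps}^{n}$, and $w_{\eps} := \rho_{\eps}^{n}$ otherwise. In either case $|w_{\eps}| \geq \rho_{\eps}^{n}$ and $|z_{\eps} - w_{\eps}| \leq 2\rho_{\eps}^{n}$, while moderateness of $(w_{\eps})$ follows from $|w_{\eps}| \leq |z_{\eps}| + \rho_{\eps}^{n}$. The resulting $w := [w_{\eps}] \in \RCcomplexrho$ satisfies $|w| \geq \diff\rho^{n}$, and since $\rho_{\eps} < 1$ eventually, bumping $n$ to $n+1$ turns this into a strict inequality of the form required by Lemma~\ref{lem:mayer}(iv); that lemma makes $|w|$ invertible, and the first part of the present lemma upgrades this to invertibility of $w$. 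The estimate $|z - w| \leq 2\diff\rho^{n}$ is then immediate from the pointwise bound.

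I do not anticipate a real obstacle: the main point needing care is that the piecewise definition of $(w_{\eps})$ is not invariant under the relation $\sim_{\rho}$ on representatives of $z$, but this is harmless because the statement asks only for the existence of one invertible neighbour of $z$, not for a representative-independent assignment $z \mapsto w$.
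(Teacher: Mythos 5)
Your proof is correct, and both halves take a mildly different route from the paper's. For the equivalence, the paper argues in the forward direction that $z_{\eps}w_{\eps}-1\to 0$ (every representative of $0$ is negligible), deduces $|z_{\eps}|>0$ for small $\eps$, and invokes Lem.~\ref{lem:mayer}; in the converse direction it again uses Lem.~\ref{lem:mayer} to get $|z_{\eps}|>0$ eventually and takes $[z_{\eps}^{-1}]$ as the inverse. Your version via multiplicativity of the modulus and the explicit formula $z^{-1}=\bar{z}/|z|^{2}$ is cleaner and avoids any appeal to Lem.~\ref{lem:mayer} in the forward direction; just note (as the paper implicitly does) that moderateness of $\bar{z}/|z|^{2}$ comes from $|z|\ge\diff\rho^{m}$, which does require Lem.~\ref{lem:mayer} at that point. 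For density, the paper also modifies a representative $\eps$-wise, but it replaces $z_{\eps}$ by $r_{\eps}/2$ only when $z_{\eps}=0$ exactly; your threshold truncation $w_{\eps}:=\rho_{\eps}^{n}$ whenever $|z_{\eps}|<\rho_{\eps}^{n}$ is actually the more robust choice, since it also handles representatives whose entries are nonzero but decay faster than every power of $\rho_{\eps}$ (e.g.\ $z_{\eps}=e^{-1/\eps}$ with $\rho_{\eps}=\eps$), a case the paper's construction does not visibly cover. Your closing remark that the construction need not be invariant under $\sim_{\rho}$ is exactly the right observation.
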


\begin{proof}
If $z\cdot w-1=[z_{\eps}\cdot w_{\eps}-1]=0$, then $z_{\eps}\cdot w_{\eps}-1\to0$
for $\eps\to0^{+}$. Therefore, by contradiction, $z_{\eps}\ne0$
for $\eps$ small, and hence $|z_{\eps}|>0$. By Lem.~\ref{lem:mayer},
the absolute value $|z|$ is invertible. Conversely, assume that $|z|$
is invertible. Since $|z|\geq0$, by Lem.~\ref{lem:mayer}, for every
representative $(z_{\eps})\in\CC_{\rho}$ of $z$ we have $\forall^{0}\eps:\ |z_{\eps}|>0$
and hence $z$ is invertible with inverse $[z_{\eps}^{-1}]$. For
the second part of the statement, we have to prove that for all $r\in\RCrealrho_{>0}$
and $z\in\RCcomplexrho$ there exists an invertible $z^{*}\in B_{r}(z)$.
Let $z=[z_{\eps}]$ and $r=[r_{\eps}]$ be any representatives, we
set
\[
z_{\eps}^{*}:=\begin{cases}
\frac{r_{\eps}}{2} & \text{if }z_{\eps}=0\\
z_{\eps} & \text{otherwise}
\end{cases},
\]
Then, once again by Lem.~\ref{lem:mayer}, we have that $z^{*}$
is invertible and $z^{*}\in B_{r}(z)$.
\end{proof}
A natural way to obtain particular subsets of $\frontRise{\mathbb{K}}{\rho}\mathbb{\widetilde{K}}^{n}$,
$\mathbb{K}\in\{\R,\CC\}$, is by using a net $(A_{\eps})$ of subset
$A_{\eps}\subseteq\mathbb{K}$.
\begin{defn}
\label{def:intStr}Let $(A_{\eps})$ be a net of subsets of $\mathbb{K}$,
then
\begin{enumerate}
\item A set of the type 
\[
[A_{\eps}]:=\left\{ [x_{\eps}]\in\frontRise{\mathbb{K}}{\rho}\mathbb{\widetilde{K}}^{n}\mid\forall^{0}\eps:\ x_{\eps}\in A_{\eps}\right\} 
\]
is called \emph{internal set}. It is possible to show that $[A_{\eps}]$
is sharply closed and $[A_{\eps}]=[\text{cl}(A_{\eps})]$, where $\text{cl}(S)$
is the closure of $S\subseteq\mathbb{K}^{n}$, see e.g.~\cite{Oberguggenberger2008,Giordano2021}.
\item A set of the type 
\[
\left\langle A_{\eps}\right\rangle :=\left\{ x\in\frontRise{\mathbb{K}}{\rho}\mathbb{\widetilde{K}}^{n}\mid\forall[x_{\eps}]=x\,\forall^{0}\eps:\ x_{\eps}\in A_{\eps}\right\} 
\]
is called \emph{strongly internal set}. It is possible to show that
$\sint{A_{\eps}}$ is sharply open and $\sint{A_{\eps}}=\sint{\text{int}(A_{\eps})}$,
where $\text{int}(S)$ is the interior of $S\subseteq\mathbb{K}^{n}$,
see e.g.~\cite{Giordano2015}.
\end{enumerate}
\end{defn}

It is quite useful to intuitively think at $\eps\to0^{+}$ as a time
variable, and hence $x=[x_{\eps}]$ as a \emph{dynamical number }(static
numbers are ordinary reals)\emph{ with a fuzzy halo of $\rho$-negligible
amplitude around $(x_{\eps})$}. A similar intuitive interpretation
can be considered for (strongly) internal sets. A thorough investigation
of internal sets can be found in \cite{Oberguggenberger2008,Aragona2012};
see \cite{Giordano2015} for strongly internal sets.

For example, it is not hard to show that the open and closed balls
of center $c=[c_{\eps}]\in\RCcomplexrho$ and radius $r=[r_{\eps}]>0$
can be written as 
\begin{equation}
B_{r}(c)=\left\langle \Eball_{r_{\eps}}(c_{\eps})\right\rangle ,\quad\overline{B_{r}(c)}=\left[\Eball_{r_{\eps}}(c_{\eps})\right],\label{eq:ball}
\end{equation}
where $\Eball_{r_{\eps}}(c_{\eps}):=\left\{ z\in\CC\mid|z-c_{\eps}|<r_{\eps}\right\} $
denotes an ordinary Euclidean ball in $\CC$. Moreover, by contradiction,
it is easy to prove that
\begin{equation}
\overline{B_{r}(c)}\subseteq\sint{A_{\eps}}\ \Rightarrow\ \forall^{0}\eps:\ \Eball_{r_{\eps}}(c_{\eps})\subseteq A_{\eps},\label{eq:closedBallStr}
\end{equation}
see e.g.~\cite[Lem.~11]{Giordano2021}.

Before continuing with the formal mathematics, we want to explain
why considering an arbitrary gauge $\rho$ is important. In fact,
any good theory of GHF has to be well linked with a good notion of
power series. However, in the ring $\Rtil:=\RCreal{(\eps)}$ of Colombeau
generalized numbers we have that $(x_{n})_{n\in\N}\in\Rtil^{\N}$
is a Cauchy sequence if and only if $\lim_{n\to+\infty}\left|x_{n+1}-x_{n}\right|=0$
(in the sharp topology; see \cite{Mukhammadiev2021,Tiwari2022}).
As a consequence, a series of Colombeau generalized numbers $a_{n}\in\Rtil$
\begin{equation}
\sum_{n=0}^{\infty}a_{n}\text{ converges}\ \iff\ a_{n}\to0\text{ (in the sharp topology)}.\label{eq:convNonArch}
\end{equation}
Actually, this is a well-known property of every ultrametric space,
cf., e.g., \cite{Koblitz1996}. For example, $\sum_{n=1}^{\infty}\frac{1}{n^{2}}\in\Rtil$
converges in the sharp topology if and only if $\frac{1}{n^{2}}\to0$
in the same topology, for $n\to+\infty$, $n\in\N_{>0}$. But the
sharp topology on $\Rtil$ necessarily has to deal with balls having
infinitesimal radius $r\in\Rtil$ (because generalized functions can
have infinite derivatives and are continuous in this topology, see
also \cite{Giordano2021}), and thus $\frac{1}{n^{2}}\not\rightarrow0$
if $n\rightarrow+\infty$, $n\in\N_{>0}$, because we never have $\R_{>0}\ni\frac{1}{n^{2}}<r$
if $r$ is infinitesimal. Similarly, one can easily prove that if
the exponential series $\sum_{n=0}^{+\infty}\frac{x^{n}}{n!}$ converges,
then necessarily $x\approx0$ must be an infinitesimal number because
if $\frac{x^{n}}{n!}\to0$ in the sharp topology of $\Rtil$, then
$|x_{\eps}|\le n!\eps^{1/n}$ for all $n\in\N$ sufficiently large.

Intuitively, the only way to have $\frac{1}{n^{2}}<r\approx0$ is
to take for $n\in\Rtil$ an infinite number, in this case an infinite
natural number. On the other hand, we intuitively would like to have
$\frac{1}{\log n}\to0$, but we have $\frac{1}{\log n_{\eps}}<\eps^{q}$
if and only if $n_{\eps}>e^{\eps^{-q}}$ and the net $\left(e^{\eps^{-q}}\right)_{\eps}$
is not $(\eps)$-moderate. In order to settle this problem, it is
hence important to generalize the role of the net $(\eps)$ as used
in Colombeau theory, into a more general gauge $\rho=(\rho_{\eps})\rightarrow0$,
and hence to generalize $\Rtil$ into $\RCrealrho$.

We can consider the aforementioned set of infinite natural numbers,
called \emph{hypernatural numbers,} and the set of $\rho$-moderate
nets of natural numbers in the following:
\begin{defn}
\label{def:hypernaturalNumbers}We set
\begin{enumerate}
\item $\hypNr:=\left\{ [n_{\eps}]\in\RCrealrho\mid n_{\eps}\in\N\;\forall\eps\right\} $.
\item $\N_{\rho}:=\left\{ \left(n_{\eps}\right)\in\R_{\rho}\mid n_{\eps}\in\N\;\forall\eps\right\} $
\end{enumerate}
\end{defn}

\noindent Therefore, $n\in\hypNr$ if and only if there exists $(x_{\eps})\in\R_{\rho}$
such that $n=\left[\text{int}\left(\left|x_{\eps}\right|\right)\right]$,
where $\text{int}\left(-\right)$ is the integer part function. Note
that $\text{int}\left(-\right)$ is not well-defined on $\hypNr$:
In fact, if $x=1=\left[1-\rho_{\eps}^{1/\eps}\right]=\left[1+\rho_{\eps}^{1/\eps}\right]$,
then $\text{int}\left(1-\rho_{\eps}^{1/\eps}\right)=0$ whereas $\text{int}\left(1+\rho_{\eps}^{1/\eps}\right)=1$,
for $\eps$ sufficiently small. However, the nearest integer function
can be correctly defined as follows (see e.g.~\cite{Mukhammadiev2021}):
\begin{defn}
\label{def:niF}The \textit{nearest integer function }$\text{ni}\left(-\right)$
is defined by:
\begin{enumerate}
\item $\text{ni}:\hypNr\longrightarrow\N_{\rho}$
\item If $[x_{\eps}]\in\hypNr$ and $\text{ni}\left(\left[x_{\eps}\right]\right)=\left(n_{\eps}\right)$
then $\forall^{0}\eps:$ $n_{\eps}=\left\lfloor x_{\eps}+\frac{1}{2}\right\rfloor $,
where $\left\lfloor -\right\rfloor $ is the floor function.
\end{enumerate}
In other words, if $x\in\hypNr$, then $x=\left[\text{ni}\left(x\right)_{\eps}\right]$
and $\text{ni}\left(x\right)_{\eps}\in\N$ for all $\eps$.
\end{defn}

To glimpse the necessity of studying $\hypNr$, it suffices to note
that we have $\frac{1}{\log n}\rightarrow0$ in $\RCreal{\sigma}$
as $n\to+\infty$ for $n\in\hypNs$, but only for a suitable gauge
$\sigma$ (depending on $\rho$, e.g.~$\sigma_{\eps}:=\exp(-\rho_{\eps}^{-1/\rho_{\eps}})$),
whereas this limit does not exist if $\sigma=\rho$ (cf.~\cite[Example~27]{Mukhammadiev2021}).
This is also related to the former problem of series of generalized
numbers $a_{n}\in\RCrealrho$ because instead of ordinary series,
we have better results summing over all $n\in\hypNs$: e.g.~we have
$\hypersum{\rho}{\sigma}\frac{z^{n}}{n!}=e^{z}$ for all $z\in\RCcomplexrho$
where the exponential is moderate, i.e.~if $|z_{\eps}|\le\log\left(\rho_{\eps}^{-R}\right)$
for some $R\in\N$ (clearly, this includes all finite numbers $z$),
see \cite{NuGi24b,Tiwari2023}. Intuitively, the smaller is the second
gauge $\sigma$, the greater are the infinite numbers we can consider
with it (i.e.~represented by $\sigma$-moderate nets), and hence
the greater is the number of summands in summations of the form $\hypersum{\rho}{\sigma}a_{n}\in\RCrealrho$.
This kind of summations are called \emph{hyperseries}, their theory
has been developed in \cite{Tiwari2022,Tiwari2023}, and we will use
them in \cite{NuGi24b} to show that any GHF is expandable in Taylor
hyperseries.

We now introduce the notion of \emph{generalized smooth function},
because the real and imaginary part of every GHF always belong to
this class.

\subsection{\protect\label{subsec:Generalized-Smooth-Functions}Generalized Smooth
Functions}

Generalized smooth functions (GSF) are the simplest way to deal with
a very large class of generalized functions and singular problems,
by working directly with all their $\rho$-moderate smooth regularizations,
in fact they are a universal way to have set-theoretical functions
defined on generalized numbers and having arbitrary derivatives (see
\cite{KeGi24}). GSF are close to the historically original conception
of generalized function, \cite{Dir26,Lau89,Kat-Tal12}: in essence,
the idea of authors such as Dirac, Cauchy, Poisson, Kirchhoff, Helmholtz,
Kelvin and Heaviside (who informally worked with ``numbers'' which
also comprise infinitesimals and infinite scalars) was to view generalized
functions as certain types of smooth set-theoretical maps obtained
from ordinary smooth maps by introducing a dependence on suitable
infinitesimal or infinite parameters. For example, the density of
a Cauchy-Lorentz distribution with an infinitesimal scale parameter
was used by Cauchy to obtain classical properties which nowadays are
attributed to the Dirac delta, \cite{Kat-Tal12}. More generally,
in the GSF approach, generalized functions are seen as suitable maps
defined on, and attaining values in, the non-Archimedean ring of scalars
$\RCrealrho$. The calculus of GSF is closely related to classical
analysis sharing several properties of ordinary smooth functions.
On the other hand, GSF include all Colombeau generalized functions
and hence also all Schwartz distributions \cite{Giordano2015,Grosser2001,Giordano2021}.
They allow nonlinear operations on generalized functions and unrestricted
composition \cite{Giordano2015,Giordano2021}. For GSF, we can prove
a number of analogues of theorems of classical analysis for generalized
functions: e.g., mean value theorem, intermediate value theorem, extreme
value theorem, Taylor’s theorems, local and global inverse function
theorems, integrals via primitives, and multidimensional integrals
\cite{Giordano2018,Giordano2017,Giordano2021}. We can develop calculus
of variations and optimal control for generalized functions, with
applications e.g.~in collision mechanics, singular optics, quantum
mechanics and general relativity, see \cite{LLG,Gastao2022}. We have
new existence results for nonlinear singular ODE and PDE (e.g.~a
Picard-Lindelöf theorem for PDE), \cite{Lu-Gi16,GiLu}, and with the
notion of \emph{hyperfinite Fourier transform} we can consider the
Fourier transform of any GSF, without restriction to tempered type,
\cite{MTG}. GSF with their particular sheaf property define a Grothendieck
topos, \cite{Giordano2021}.
\begin{defn}
\label{def:GSF}Let $X\subseteq\RCrealrho^{n}$ and $Y\subseteq\RCrealrho^{d}$.
We say that $f:X\ra Y$ is a GSF ($f\in\gsf(X,Y)$), if
\begin{enumerate}
\item $f:X\ra Y$ is a map.
\item There exists a net $(f_{\eps})\in\mathcal{C}^{\infty}(\Omega_{\eps},\R^{d})$
such that $X\subseteq\sint{\Omega_{\eps}}$ and for all $[x_{\eps}]\in X$
(i.e.~for all representatives $(x_{\eps})$):
\begin{enumerate}[label=\alph*)]
\item $f(x)=[f_{\eps}(x_{\eps})]$
\item $\forall\alpha\in\N^{n}:\ \left(\partial^{\alpha}f_{\eps}(x_{\eps})\right)$
is $\rho$-moderate.
\end{enumerate}
\end{enumerate}
\end{defn}

In Thm.~\ref{thm:functionofderivative}, we will prove that real
and imaginary part of any GHF are GSF.

\subsection{The language of subpoints.}

The following simple language allow us to simplify some proofs using
steps recalling the classical real field $\R$, particularly in dealing
with a negation of an equality or of an inequality, or even of an
arbitrary property in $\RCrealrho$. This notion is studied in \cite{Mukhammadiev2021}.
\begin{defn}
\label{def:subpoint}~
\begin{enumerate}
\item For subsets $J$, $K\subseteq I$ we write $K\subseteq_{0}J$ if $0$
ia an accumulation point of $K$ and $K\subseteq J$ (we read it as:
$K$ \textit{is co-final in }$J$). Note that for any $J\subseteq_{0}I$,
the constructions introduced so far in Def. \ref{def:RCring} can
be repeated using nets $(x_{\eps})_{\eps\in J}$. We indicate the
resulting ring with the symbol $\RCrealrho|_{J}$. More generally,
no peculiar property of $I=(0,1]$ will ever be used in the following,
and hence all the presented results can be easily generalized considering
any other directed set.
\item If $K\subseteq_{0}J$, $x\in\RCrealrho|_{J}$ and $x'\in\RCrealrho|_{K}$,
then $x'$ is called a \textit{subpoint} of $x$, denoted as $x'\subseteq x$,
if there exist representatives $(x_{\eps})_{\eps\in J}$, $(x_{\eps}')_{\eps\in K}$
of $x$, $x'$ such that $x_{\eps}'=x_{\eps}$ for all $\eps\in K$.
In this case, we write $x'=x|_{K}$, $\text{dom}(x'):=K$, and the
restriction $(-)|_{K}:\RCrealrho\ra\RCrealrho|_{K}$ is a well defined
operation. In general, for $X\subseteq\RCrealrho$ we set $X|_{J}:=\left\{ x|_{J}\in\RCrealrho|_{J}:x\in X\right\} $.
\end{enumerate}
\end{defn}

In the next definition, we now consider binary relations that hold
only \textit{on subpoints}.
\begin{defn}
Let $x$, $y\in\RCrealrho$, $L\subseteq_{0}I$, then we say:
\begin{enumerate}
\item $x<_{L}y$ if $x|_{L}<y|_{L}$, the latter inequality has to be meant
in the ordered ring $\RCrealrho|_{L}$. We read $x<_{L}y$ as ``$x$
is less than $y$ on $L$''.
\item $x\sbpt{<}y$ if $\exists L\subseteq_{0}I:x<_{L}y$. We read $x\sbpt{<}y$
as ``$x$ is less than $y$ on subpoints''.
\end{enumerate}
Analogously, we can define other relations holding only on subpoints
such as e.g.: $\sbpt{\in}$, $\sbpt{\le}$, $\sbpt{=}$, $\sbpt{\subseteq}$,
etc.
\end{defn}

For example, we have
\begin{align}
x<y & \Longleftrightarrow\forall L\subseteq_{0}I:x<_{L}y\;\text{and}\nonumber \\
x\neq y & \Longleftrightarrow x\sbpt{>}y\text{ or }x\sbpt{<}y.\label{eq:orderedFieldSub}
\end{align}
Moreover, if $\mathcal{P}\{x_{\eps}\}$ is an arbitrary property of
$x_{\eps}$, then
\[
\neg\left(\forall^{0}\eps:\mathcal{P}\{x_{\eps}\}\right)\Longleftrightarrow\exists L\subseteq_{0}I\;\forall\eps\in L:\neg\mathcal{P}\{x_{\eps}\}.
\]
As one can guess from \eqref{eq:orderedFieldSub} (recall that $x\sbpt{>}0$
implies that $x$ is invertible on subpoints), this language allows
one to deal with good substitute of the field and of the total order
properties, see \cite{Mukhammadiev2021} for a deeper study.

\section{\protect\label{sec:Generalized-Holomorphic-Function}Generalized
Holomorphic Functions}

\subsection{\protect\label{subsec:Basic-and-Little-oh}Basic (locally Lipschitz)
functions, limits and weak little-oh}

Before defining the notion of GHF, we need to reformulate in our non-Archimedean
setting some more basic notions: what is the class of functions we
want to consider, their limits and continuity in the sharp topology,
and Landau little-oh relation. These are the concepts that allow us
to get a beautiful theory of GHF.

Considering the theory of GSF, we still want to have generalized holomorphic
function of the form $f(z)=\left[f_{\eps}(z_{\eps})\right]\in\RCcomplexrho$
but without assuming the CRE for $f_{\eps}$ directly in the definition
like in Colombeau theory; we can think e.g.~$(f_{\eps})$ as obtained
by a suitable holomorphic regularization of a compactly supported
distribution, see Sec.~\ref{sec:Examples}.

We therefore start by considering the notion of basic function as
follows:
\begin{defn}
\label{def:basicF}Let $U\subseteq\RCcomplexrho$. We say that $f:U\ra\RCcomplexrho$
is a \textit{basic function} (or a \textit{$\rho$-basic function}
in case we have to underscore the dependence from the gauge $\rho$),
if
\begin{enumerate}
\item \label{enu:basicF1}$f:U\ra\RCcomplexrho$ is a map;
\item \label{enu:basicF2}There exists a net $(f_{\eps})$ such that $f_{\eps}:U_{\eps}\ra\CC$,
$U\subseteq\sint{U_{\eps}}$, $U_{\eps}\subseteq\CC$, and for all
$z=[z_{\eps}]\in U$
\begin{equation}
f(z)=\left[f_{\eps}\left(z_{\eps}\right)\right],\label{eq:basicF}
\end{equation}
and in this case, we say that $f$ \emph{is defined by} the net $(f_{\eps})$.
\item If $U=\sint{\Omega}$, the \emph{standard part of }$f$ is $\st{f}:\left\{ z\in\Omega\mid\exists\st{f(z)}\right\} \mapsto\st{f(z)}\in\CC$,
so that $\st{f}(z)=\lim_{\eps\to0^{+}}f_{\eps}(z)\in\CC$ for all
$z\in\Omega$ where such a limit exists.
\end{enumerate}
\noindent Moreover, we say that the basic function $f$ is ($\rho$-\emph{)locally
Lipschitz} if for all $z\in U$, there exist constants $L\in\RCrealrho_{>0}$
and $r\in\RCrealrho_{>0}$ such that 
\begin{equation}
|f(x)-f(y)|\leq L\cdot|x-y|\quad\forall x,y\in B_{r}(z).\label{eq:lipcond}
\end{equation}
We also refer to $f$ as $L$-\emph{Lipschitz function} on the sharply
open ball $B_{r}(z)$ if \eqref{eq:lipcond} holds, and we finally
say that $f$ is \emph{Lipschitz at $z$} if \eqref{eq:lipcond} holds
for some $r\in\RCrealrho_{>0}$ and some $L\in\RCrealrho_{>0}$.
\end{defn}

\noindent A similar concept was first introduced in \cite{Garetto2011}
to define basic linear maps. Note also that equality \eqref{eq:basicF}
implies that if $(z_{\eps})\sim_{\rho}(z'_{\eps})$, then $(f_{\eps}(z_{\eps}))\sim_{\rho}(f_{\eps}(z'_{\eps}))$,
which is a strong limitation on the maps $f_{\eps}$: e.g.~$f_{\eps}(z)=|z|$
defines a basic function, but $f_{\eps}(z)=1$ if $z\ne0$ and $f_{\eps}(0)=0$
does not. Therefore, basic functions are particular cases of set theoretical
maps of the type $f:U\ra\RCcomplexrho$ because they need to have
suitable defining nets $(f_{\eps})$ satisfying \ref{enu:basicF2}
above.

Clearly, we expect that our GHF are locally Lipschitz. We therefore
start studying locally Lipschitz maps and proving that being locally
Lipschitz can also be formulated $\eps$-wise.
\begin{thm}
\label{thm:Lipschitzcond}Let $U_{0}\subseteq\RCcomplexrho$ be a
sharp neighbourhood of $0$. If $R=[R_{\eps}(-)]:U_{0}\ra\RCcomplexrho$
is a basic function, then $R$ is Lipschitz at $0$ if and only if
there exists $Q\in\R_{>0}$ and $(L_{\eps})\in\R_{\rho}$ such that
$R_{\eps}$ is $L_{\eps}$-Lipschitz (hence continuous) over $\Eball_{\rho_{\eps}^{Q}}(0)\subseteq\CC$
for $\eps$ small.
\end{thm}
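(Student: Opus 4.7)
The plan is to prove the two implications separately. The reverse direction ($\Leftarrow$) transfers the classical Lipschitz condition on representatives to the generalized level via the strongly internal description of sharp balls, and is essentially mechanical. The forward direction ($\Rightarrow$) is the substantive half and proceeds by contradiction, using the subpoint language introduced above.

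For ($\Leftarrow$), set $r:=\diff\rho^Q$ and $L:=[L_\eps]$. By \eqref{eq:ball}, $B_r(0)=\sint{\Eball_{\rho_\eps^Q}(0)}$, so any representatives $(x_\eps),(y_\eps)$ of points $x,y\in B_r(0)$ eventually lie in $\Eball_{\rho_\eps^Q}(0)$. The classical Lipschitz hypothesis gives $|R_\eps(x_\eps)-R_\eps(y_\eps)|\le L_\eps|x_\eps-y_\eps|$ for $\eps$ small, and passing to classes in $\RCrealrho$ yields $|R(x)-R(y)|\le L|x-y|$, showing that $R$ is Lipschitz at $0$.

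For ($\Rightarrow$), let $L,r\in\RCrealrho_{>0}$ realize the Lipschitz-at-$0$ property. By Lemma \ref{lem:mayer} applied to $r$, pick $m\in\N$ with $r\ge\diff\rho^m$ and set $Q:=m$, so that $B_{\diff\rho^Q}(0)\subseteq B_r(0)$. Fix a representative $(\Lambda_\eps)\in\R_\rho$ of $L$ and take $L_\eps:=\Lambda_\eps+1$ as the candidate classical Lipschitz constant. Argue by contradiction: if $R_\eps$ is not $L_\eps$-Lipschitz on $\Eball_{\rho_\eps^Q}(0)$ for $\eps$ small, negating ``$\forall^0\eps$'' yields a co-final $K\subseteq_0 I$ and, for each $\eps\in K$, a pair $u_\eps,v_\eps\in\Eball_{\rho_\eps^Q}(0)$ with $|R_\eps(u_\eps)-R_\eps(v_\eps)|>(\Lambda_\eps+1)|u_\eps-v_\eps|$. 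Extending $(u_\eps),(v_\eps)$ by $0$ outside $K$ produces $\rho$-moderate nets whose classes $\hat u,\hat v$ lie in $B_{\diff\rho^Q}(0)\subseteq B_r(0)$. Applying $|R(\hat u)-R(\hat v)|\le L|\hat u-\hat v|$ and unfolding the definition of $\le$ in $\RCrealrho$ for the representative $|R_\eps(u_\eps)-R_\eps(v_\eps)|$ of the left-hand side produces a $\rho$-negligible $(\nu_\eps)$ with $|R_\eps(u_\eps)-R_\eps(v_\eps)|\le\Lambda_\eps|u_\eps-v_\eps|+\nu_\eps$ for $\eps$ small. Combined with the violation on $K$, this forces $|u_\eps-v_\eps|<\nu_\eps$ on $K$, so $(u_\eps-v_\eps)|_K$ is $\rho$-negligible; the basic function hypothesis (stability of $R_\eps$ under $\rho$-negligible perturbations of its argument) then forces $(R_\eps(u_\eps)-R_\eps(v_\eps))|_K$ to be $\rho$-negligible too, and a careful subpoint comparison of the $\rho$-decay rates of these two nets against the strict inequality of the violation closes the argument.

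The main obstacle is this final step: once $|u_\eps-v_\eps|$ is $\rho$-negligible on $K$, the generalized bound $|R(\hat u)-R(\hat v)|\le L|\hat u-\hat v|$ degenerates to $0\le 0$ at the level of equivalence classes, so the contradiction must be read off from the $\eps$-wise representatives rather than from the classes themselves. This is where the basic function hypothesis and the subpoint language are indispensable: the former prevents the generalized Lipschitz condition from being trivially satisfied at infinitesimal scales, while the latter permits fine-grained comparison of competing $\rho$-decay rates. Iterating the contradiction argument with $L_\eps:=\Lambda_\eps+\rho_\eps^{-N}$ for growing $N\in\N$, or extracting a refined diagonal subpoint of $K$, may be needed to bring the quantitative contradiction into focus.
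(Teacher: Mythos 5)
Your reverse implication is correct and coincides with the paper's argument. The forward implication also starts along the same lines as the paper (extract $Q$ via Lem.~\ref{lem:mayer}, negate the $\forall^{0}\eps$ statement to get a co-final set $K\subseteq_{0}I$ of violations, assemble the violating pairs into generalized points of $B_{r}(0)$), but the endgame has a genuine gap, and the route you have set up cannot be completed. After you deduce that $|u_{\eps}-v_{\eps}|<\nu_{\eps}$ on $K$ with $(\nu_{\eps})$ negligible, and hence (by well-definedness of the basic function $R$) that $\left(R_{\eps}(u_{\eps})-R_{\eps}(v_{\eps})\right)|_{K}$ is negligible as well, the data you hold is simply \emph{consistent}: take $|u_{\eps}-v_{\eps}|=\rho_{\eps}^{2/\eps}$ and $|R_{\eps}(u_{\eps})-R_{\eps}(v_{\eps})|=\rho_{\eps}^{1/\eps}$. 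Both nets are negligible, yet $\rho_{\eps}^{1/\eps}>(\Lambda_{\eps}+1)\rho_{\eps}^{2/\eps}$ holds for $\eps$ small whenever $(\Lambda_{\eps})$ is moderate, because the ratio $\rho_{\eps}^{-1/\eps}$ eventually dominates every power $\rho_{\eps}^{-N}$. Hence no ``comparison of decay rates'' can produce a contradiction, and for the same reason your proposed escalation $L_{\eps}:=\Lambda_{\eps}+\rho_{\eps}^{-N}$ with growing $N$ does not help: the violation ratio can exceed every moderate net. As you yourself observe, the generalized inequality $|R(\hat{u})-R(\hat{v})|\le L|\hat{u}-\hat{v}|$ degenerates to $0\le0$ in this regime; contrary to what your last paragraph hopes, nothing further can be recovered from it.

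The paper closes the contradiction by a different mechanism: from the violation \eqref{eq:KContr} it asserts $|h-k|>_{K}0$, i.e.~invertibility of the increment \emph{on} $K$, which upgrades the $\eps$-wise strict inequality to $L|h-k|<_{K}|R(h)-R(k)|\le L\cdot|h-k|$ and, after cancelling the invertible factor $|h-k|$, to the absurdity $L<_{K}L$. In other words, the entire weight of the argument rests on excluding precisely the regime your deductions land you in (increment negligible on $K$), not on analysing it. To repair your version you would have to justify that the violating pairs can be chosen with $|u_{\eps}-v_{\eps}|$ invertible on some co-final subset (or, as in Cor.~\ref{cor:epslimit} and Lem.~\ref{lem:R(0)=00003D0}, allow yourself to modify the defining net $(R_{\eps})$); your shift from $\Lambda_{\eps}$ to $\Lambda_{\eps}+1$ is harmless but does not touch this difficulty. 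Incidentally, your use of the basic-function stability of $R$ plays no role in the paper's proof, which argues purely with the order structure of $\RCrealrho$ and the subpoint relations.
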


\begin{proof}
$\Rightarrow$: Property \eqref{eq:lipcond} yields the existence
of some $L=[L_{\eps}]\in\RCrealrho_{>0}$ and $r=[r_{\eps}]\in\RCrealrho_{>0}$
such that $R$ is $L$-Lipschitz on the sharply open ball $B_{r}(0)$.
From \eqref{eq:ball} and Lem.~\ref{lem:mayer}, there exists $Q\in\N$
such that $\forall^{0}\eps:\ r_{\eps}>\rho_{\eps}^{Q}$ and hence
$R$ is $L$-Lipschitz on $\left\langle \Eball_{\rho_{\eps}^{Q}}(0)\right\rangle $.
By contradiction, assume that there exist a strictly decreasing sequence
$(\eps_{n})_{n\in\N}\rightarrow0^{+}$ and $h_{\eps_{n}}$, $k_{\eps_{n}}\in\Eball_{\rho_{\eps_{n}}^{Q}}(0)$
such that
\begin{equation}
|R_{\eps_{n}}(h_{\eps_{n}})-R_{\eps_{n}}(k_{\eps_{n}})|>L_{\eps_{n}}|h_{\eps_{n}}-k_{\eps_{n}}|\quad\forall n\in\N.\label{eq:KContr}
\end{equation}
Defining $h_{\eps}:=h_{\eps_{n}}$ and $k_{\eps}:=k_{\eps_{n}}$ for
$\eps\in(\eps_{n+1},\eps_{n}]$, and taking any $h_{\eps}$, $k_{\eps}\in\Eball_{\rho_{\eps}^{Q}}(0)$
otherwise, we have that $h:=[h_{\eps}]$, $k:=[k_{\eps}]\in B_{r}(0)$.
Setting $K:=\left\{ \eps_{n}\mid n\in\N\right\} \subseteq_{0}I$,
we have $|h-k|>_{K}0$ from \eqref{eq:KContr}, and
\[
L|h-k|<_{K}|R(h)-R(k)|\leq L\cdot|h-k|\Longrightarrow L\cdot|h-k|<_{K}L\cdot|h-k|.
\]
Therefore, $L<_{K}L$ for some $K\subseteq_{0}I$, which is a contradiction.

\noindent$\Leftarrow$: For $\eps$ small and $h$, $k\in\Eball_{\rho_{\eps}^{Q}}(0)$,
we have
\[
|R_{\eps}(h)-R_{\eps}(k)|\leq L_{\eps}|h-k|.
\]
For all $x=[x_{\eps}]$, $y=[y_{\eps}]\in\left\langle \Eball_{\rho_{\eps}^{Q}}(0)\right\rangle =B_{\diff\rho^{Q}}(0)$,
we have $x_{\eps}$, $y_{\eps}\in\Eball_{\rho_{\eps}^{Q}}(0)$ and
\[
|R_{\eps}(x_{\eps})-R_{\eps}(y_{\eps})|\leq L_{\eps}\cdot|x_{\eps}-y_{\eps}|,
\]
for small $\eps$. Hence, $R$ is locally Lipschitz for $L:=[L_{\eps}]\in\RCrealrho$.
\end{proof}
It is clear that any basic locally Lipschitz function $f:U\ra\RCcomplexrho$
is \textit{sharply continuous on }$U$, i.e.~for all $z_{0}\in U$,\textit{
\begin{equation}
\forall q\in\N\,\exists H\in\RCrealrho_{>0}\,\forall h\in U:|h-z_{0}|<H\implies\left|f(h)-f(z_{0})\right|<\diff\rho^{q}.\label{eq:contF}
\end{equation}
}

\noindent In the next definition, we introduce the notion of \emph{limit}
of a basic functions.
\begin{defn}
\label{def:rho-limit}Let $U_{0}\subseteq\RCcomplexrho$ be a sharp
neighbourhood of $z_{0}$. Let $R:U_{0}\ra\RCcomplexrho$ be a basic
function. We say that the ($\rho$-)\emph{limit} of $R$, as $h$
tends to $z_{0}$ in the ($\rho$-)sharp topology is $\lambda\in\RCcomplexrho$,
and we write $\lim_{h\to z_{0}}R(h)=\lambda$ or $\hyperlimfarg{\rho}{z_{0}}{h}R(h)=\lambda$
in case we have to underscore the dependence from the gauge $\rho$,
if the following property holds: 
\begin{equation}
\forall q\in\N\,\exists H\in\RCreal\rho_{>0}\,\forall h\in U_{0}:\ 0<|h-z_{0}|<H\Rightarrow|R(h)-\lambda|<\diff{\rho}^{q}.\label{eq:defhyperlimit}
\end{equation}
\end{defn}

\begin{rem}
\label{rem:rholimit}In the assumptions of Def.~\ref{def:rho-limit},
let $k\in\RCrealrho_{>0}$ and $N\in\N$, then the following are equivalent:
\begin{enumerate}
\item $\lambda\in\RCcomplexrho$ is the $\rho$-limit of $R$ as $h$ tends
to $z_{0}$.
\item $\forall\eta\in\RCreal\rho_{>0}\,\exists H\in\RCreal\rho_{>0}\,\forall h\in U_{0}:\ 0<|h-z_{0}|<H\Rightarrow|R(h)-\lambda|<\eta$.
\item \label{enu:rholimit3}For all $\rho$-sharply open sets $U\subseteq\RCcomplexrho$
then there exists a $\rho$-sharply open set $V\subseteq\RCcomplexrho$
containing $z_{0}$ such that $\forall h\in V\cap U_{0}:R(h)\in U$.
\item $\forall q\in\N\,\exists H\in\RCreal\rho_{>0}\,\forall h\in U_{0}:\ 0<|h-z_{0}|<H\Rightarrow|R(h)-\lambda|<k\cdot\diff{\rho}^{q}$.
\item $\forall q\in\N\,\exists H\in\RCreal\rho_{>0}\,\forall h\in U_{0}:\ 0<|h-z_{0}|<H\Rightarrow|R(h)-\lambda|<\diff{\rho}^{q-N}$.
\end{enumerate}
\end{rem}

\noindent Directly by the inequality $|\lambda_{1}-\lambda_{2}|\leq|\lambda_{1}-R(h)|+|\lambda_{2}-R(h)|\leq2\diff\rho^{q+1}<\diff\rho^{q}$
and by Lem.~\ref{lem:Invertible-generalized-complex} (or by using
that the sharp topology on $\RCcomplexrho$ is Hausdorff) it follows
that there exists at most one limit. Moreover, since any basic locally
Lipschitz function $R:U_{0}\ra\RCcomplexrho$ is sharply continuous
on $U_{0}$, in this case we have 
\begin{equation}
\lim_{h\to z_{0}}R(h)=R(z_{0}).\label{eq:limitcontinuous}
\end{equation}

\begin{lem}
\label{lem:R(0)=00003D0}Let $U_{0}\subseteq\RCcomplexrho$ be a sharp
neighbourhood of $0$. Let $R:U_{0}\ra\RCcomplexrho$ be a basic locally
Lipschitz function. Then $R(0)=0$ if and only if there exists a net
of functions $(R_{\eps}(-))$ that defines $R$ such that $R_{\eps}(0)=0$
for small $\eps$.
\end{lem}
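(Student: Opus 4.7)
The $(\Leftarrow)$ direction is immediate from Def.~\ref{def:basicF}: if $(R_\eps)$ defines $R$ and $R_\eps(0)=0$ for $\eps$ small, then $R(0)=[R_\eps(0)]=0$, so I would dispose of it in a single line.

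For the $(\Rightarrow)$ direction, my plan is to start from any defining net $(R_\eps: U_\eps\to\CC)$ of $R$ and construct a modified net by the obvious subtraction $\widetilde{R}_\eps(z):=R_\eps(z)-R_\eps(0)$. I would first observe that since $0\in U_0\subseteq\sint{U_\eps}$, applying the defining property of the strongly internal set $\sint{U_\eps}$ to the constant representative $(0)_\eps$ of $0\in\RCcomplexrho$ yields $0\in U_\eps$ for $\eps$ small; hence $R_\eps(0)$ is defined for $\eps$ small, and the hypothesis $R(0)=0$ translates into $[R_\eps(0)]=0$, i.e., the net $(R_\eps(0))_\eps$ is $\rho$-negligible.

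I would then set $\widetilde{R}_\eps(z):=R_\eps(z)-R_\eps(0)$ whenever $0\in U_\eps$, and $\widetilde{R}_\eps:=R_\eps$ otherwise (the domain $U_\eps$ is unchanged, so the condition $U_0\subseteq\sint{U_\eps}$ still holds for the new net). By construction $\widetilde{R}_\eps(0)=0$ for $\eps$ small, and for any $z=[z_\eps]\in U_0$ the computation
\[
\bigl[\widetilde{R}_\eps(z_\eps)\bigr]=[R_\eps(z_\eps)]-[R_\eps(0)]=R(z)-0=R(z)
\]
would show that $(\widetilde{R}_\eps)$ is again a defining net for $R$, now with the required property. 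The main (but genuinely minor) obstacle is verifying that $R_\eps(0)$ is defined for $\eps$ small via the strongly internal condition, and recognizing that the subtraction does not alter the equivalence class of the image, which follows directly from the $\rho$-negligibility of $(R_\eps(0))$. I would also note, perhaps in a side remark, that the local Lipschitz hypothesis plays no essential role here: it is a property of $R$ itself, hence independent of the particular defining net chosen, and the shift by a constant preserves it anyway.
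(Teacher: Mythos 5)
Your proposal is correct and coincides with the paper's proof: the paper likewise disposes of the converse as trivial and, for the forward direction, defines $R_{\eps}(h):=\bar{R}_{\eps}(h)-\bar{R}_{\eps}(0)$ from an arbitrary defining net $(\bar{R}_{\eps})$. Your additional checks (that $0\in U_{\eps}$ for $\eps$ small and that the shift by the $\rho$-negligible net $(R_{\eps}(0))$ preserves the defining property) are exactly the details the paper leaves implicit.
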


\begin{proof}
If $R(0)=0$ and the net $(\bar{R}_{\eps})$ defines $R$, then $R_{\eps}(h):=\bar{R}_{\eps}(h)-\bar{R}_{\eps}(0)$
satisfies the claim. The opposite implication is trivial.
\end{proof}
In the following corollary, we use Thm.~\ref{thm:Lipschitzcond}
to get an $\eps$-wise characterization of limits.
\begin{cor}
\label{cor:epslimit}Let $U_{0}\subseteq\RCcomplexrho$ be a sharp
neighbourhood of $0$. Let $R:U_{0}\ra\RCcomplexrho$ be a $\rho$-basic
function which is Lipschitz at $0$. Then the following are equivalent:
\begin{enumerate}
\item $\hyperlimf{\rho}{0}R(h)=0$.
\item \label{enu:epslimit(2)}There exist $Q\in\R_{>0}$ and a net of functions
$(R_{\eps}(-))$ that defines $R$, it is $L_{\eps}$-Lipschitz over
$\Eball_{\rho_{\eps}^{Q}}(0)\subseteq\CC$, and $\lim_{h\rightarrow0}R_{\eps}(h)=0$
for small $\eps$.
\end{enumerate}
\end{cor}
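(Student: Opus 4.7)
\smallskip
\noindent\textbf{Proof plan.}
The plan is to handle the two implications separately, using Thm.~\ref{thm:Lipschitzcond} (the $\eps$-wise characterization of the Lipschitz condition) and Lem.~\ref{lem:R(0)=00003D0} (the $\eps$-wise characterization of $R(0)=0$) for the forward direction, and a direct estimate for the converse.

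For $(1)\Rightarrow(2)$: since $R$ is Lipschitz at $0$, it is sharply continuous at $0$, so by \eqref{eq:limitcontinuous} we get $R(0)=\lim_{h\to 0}R(h)=0$. By Thm.~\ref{thm:Lipschitzcond} there exist $Q\in\R_{>0}$ and a defining net $(\bar{R}_{\eps})$ and $(L_{\eps})\in\R_{\rho}$ such that $\bar{R}_{\eps}$ is $L_{\eps}$-Lipschitz on $\Eball_{\rho_{\eps}^{Q}}(0)$ for $\eps$ small. By Lem.~\ref{lem:R(0)=00003D0} applied to this net (in the form of the construction used in its proof), the modified net $R_{\eps}(h):=\bar{R}_{\eps}(h)-\bar{R}_{\eps}(0)$ still defines $R$, still has $R_{\eps}(0)=0$, and is $L_{\eps}$-Lipschitz on the same ball (since subtracting a constant preserves the Lipschitz constant). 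Finally, because Lipschitz implies continuous, $\lim_{h\to 0}R_{\eps}(h)=R_{\eps}(0)=0$ for $\eps$ small, proving (2).

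For $(2)\Rightarrow(1)$: fix $q\in\N$. Since $(L_{\eps})\in\R_{\rho}$, there is $N\in\N$ with $L_{\eps}=\bigO(\rho_{\eps}^{-N})$. From $\lim_{h\to 0}R_{\eps}(h)=0$ and continuity of $R_{\eps}$ (being Lipschitz) we get $R_{\eps}(0)=0$ for $\eps$ small. Set $H:=\diff{\rho}^{q+N+Q+1}\in\RCrealrho_{>0}$. For any $h=[h_{\eps}]\in U_{0}$ with $0<|h|<H$ we have, for $\eps$ small, $|h_{\eps}|<\rho_{\eps}^{q+N+Q+1}<\rho_{\eps}^{Q}$, so $h_{\eps}\in\Eball_{\rho_{\eps}^{Q}}(0)$ and the Lipschitz bound together with $R_{\eps}(0)=0$ yields
\[
|R_{\eps}(h_{\eps})|=|R_{\eps}(h_{\eps})-R_{\eps}(0)|\leq L_{\eps}\cdot|h_{\eps}|\leq C\rho_{\eps}^{-N}\cdot\rho_{\eps}^{q+N+Q+1}\leq\rho_{\eps}^{q}
\]
for $\eps$ small. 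Therefore $|R(h)|\leq\diff{\rho}^{q}$, and by Rem.~\ref{rem:rholimit}\eqref{enu:rholimit3} (concretely the equivalence with a strict inequality up to shifting $q$) we conclude $\hyperlimf{\rho}{0}R(h)=0$.

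I do not expect a real obstacle here: the only subtle point is that Thm.~\ref{thm:Lipschitzcond} yields \emph{some} defining net which is Lipschitz $\eps$-wise, whereas Lem.~\ref{lem:R(0)=00003D0} yields \emph{some} defining net which vanishes at $0$. The little bit of care is to observe that the constant-subtraction trick used in the proof of Lem.~\ref{lem:R(0)=00003D0} preserves the Lipschitz property, so that both conditions can be arranged simultaneously on a single net.
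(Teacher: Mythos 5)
Your proof is correct and follows exactly the route the paper intends for this corollary: the forward implication combines Thm.~\ref{thm:Lipschitzcond} with the constant-subtraction device of Lem.~\ref{lem:R(0)=00003D0} (correctly noting that subtracting $\bar{R}_{\eps}(0)$ preserves the $L_{\eps}$-Lipschitz bound, so both $\eps$-wise properties can be arranged on one net), and the converse is the direct moderateness estimate $|R_{\eps}(h_{\eps})|\le L_{\eps}|h_{\eps}|\le C\rho_{\eps}^{q+Q+1}$. The only blemish is a mislabelled citation at the very end: the ``strict inequality up to shifting $q$'' equivalence you actually use is the last item of Rem.~\ref{rem:rholimit}, not item~\ref{enu:rholimit3}.
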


\medskip{}

A natural definition of $\RCcomplexrho$-differentiable function $f:U\ra\RCcomplexrho$
at $z_{0}\in U$ has to be linked to a linearization property of the
form
\begin{equation}
f(z_{0}+h)=f(z_{0})+h\cdot m+o(h)\text{ as }h\to0,\label{eq:linearization}
\end{equation}
for some $m\in\RCcomplexrho$. In pursuing this idea, we have to first
define the notion of Landau little-oh in a ring such as $\RCcomplexrho$.
\begin{defn}
\label{def:weakLittleoh}Let $U_{0}\subseteq\RCcomplexrho$ be a sharp
neighbourhood of $0$ and $f_{1}$, $f_{2}:U_{0}\ra\RCcomplexrho$
be maps. Then we say that 
\[
f_{1}(h)=\bar{o}(f_{2}(h))\:\text{as}\:h\to0,
\]
if there exists a map $r:U_{0}\ra\RCcomplexrho$ such that
\begin{enumerate}
\item \label{enu:weakLittleoh1}$\hyplimf{\rho}{h}r(h)=0$.
\item \label{enu:weakLittleoh2}$\forall h\in U_{0}:\ f_{1}(h)=f_{2}(h)r(h)$.
\end{enumerate}
We call this relation \emph{weak little-oh}.

\end{defn}

\begin{rem}
\label{rem:littleoh}~
\begin{enumerate}[label=\alph*)]
\item If $f_{2}(h)$ is invertible for all sufficiently small invertible
$h$, and both $r$, $\bar{r}$ satisfy Def.~\ref{def:weakLittleoh},
then $r(h)=\bar{r}(h)$ for these $h$.
\item If $f_{2}(h)=h$, then Def.~\ref{def:weakLittleoh} implies $\hyplimf{\rho}{h}\frac{f_{1}(h)}{h}=0$,
because in the definition of limit \eqref{eq:defhyperlimit} we always
take $0<|h|$, i.e.~$h$ invertible. This and \eqref{eq:linearization}
yield a comparison with the approach followed by \cite{Aragona2005}.
\end{enumerate}
\end{rem}

\noindent Here are some general rules that are easily satisfied by
the definition of weak little-oh:
\begin{lem}
\label{lem:littleoh}In the assumptions of Def.~\ref{def:weakLittleoh},
we have
\begin{enumerate}
\item \label{enu:lemmalittleoh1}If $f(h)=\bar{o}(h)$ and $g(h)=\bar{o}(h)$
then $f(h)+g(h)=\bar{o}(h)$ and $f(h)g(h)=\bar{o}(h^{2})$.
\item \label{enu:lemmalittleoh2}If $f(h)=\bar{o}(h)$ and $c\in\RCcomplexrho$,
then $cf(h)=\bar{o}(h)$.
\end{enumerate}
\end{lem}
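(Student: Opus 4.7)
The plan is to unpack each claim via Def.~\ref{def:weakLittleoh}, exhibit the obvious candidate remainder, and verify its $\rho$-limit is $0$ using Def.~\ref{def:rho-limit} together with the flexibility recorded in Rem.~\ref{rem:rholimit}. Throughout, I write $f(h)=h\cdot r_{1}(h)$ and $g(h)=h\cdot r_{2}(h)$ with $\hyplimf{\rho}{h}r_{i}(h)=0$; since algebraic combinations of basic functions with constants are again basic, only the limit of the appropriate combination needs to be verified in each case.

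For the sum, the natural candidate remainder of $f+g=h\cdot(r_{1}+r_{2})$ is $r_{1}+r_{2}$. Given $q\in\N$, I would apply Def.~\ref{def:rho-limit} to each $r_{i}$ at the threshold $\diff\rho^{q+1}$ to obtain radii $H_{i}\in\RCrealrho_{>0}$, and set $H:=\min(H_{1},H_{2})$: the triangle inequality then gives $|r_{1}(h)+r_{2}(h)|\le 2\diff\rho^{q+1}<\diff\rho^{q}$ on $0<|h|<H$. For the product, I would write $fg=h^{2}\cdot(r_{1}\cdot r_{2})$ and bound each $|r_{i}(h)|<\diff\rho^{q}$ on a common sharp neighbourhood, yielding $|r_{1}(h)\cdot r_{2}(h)|<\diff\rho^{2q}\le\diff\rho^{q}$ there (using $\diff\rho<1$). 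Both these steps are routine metric-style manipulations.

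The only genuine subtlety is the constant-multiple case $cf(h)=h\cdot(c\cdot r_{1}(h))$, whose candidate remainder is $c\cdot r_{1}$. Here $c\in\RCcomplexrho$ may be infinite, so $|c|$ need not admit any standard upper bound; this is the step I expect to be the main obstacle. The resolution is moderateness: by the very definition of $\RCcomplexrho$ there exists $N\in\N$ such that $|c|\le\diff\rho^{-N}$ (absorbing the standard multiplicative constant from moderateness into the exponent, as justified by Lem.~\ref{lem:mayer} applied to a representative of $\diff\rho^{-N}-|c|$). For given $q\in\N$, invoking the limit of $r_{1}$ at level $q+N+1$ then produces $H\in\RCrealrho_{>0}$ with $|c\cdot r_{1}(h)|\le\diff\rho^{-N}\cdot\diff\rho^{q+N+1}=\diff\rho^{q+1}<\diff\rho^{q}$ on $0<|h|<H$. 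This is exactly the trick encoded in the equivalent formulation of the $\rho$-limit in Rem.~\ref{rem:rholimit}(5), which is tailored precisely to absorb such moderate-but-unbounded factors.
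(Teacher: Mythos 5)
Your proof is correct and is exactly the routine verification the paper intends: the authors state this lemma without proof as ``easily satisfied by the definition'', and your candidate remainders $r_{1}+r_{2}$, $r_{1}r_{2}$ and $c\cdot r_{1}$ together with the limit checks are the canonical argument. You also correctly isolate and resolve the only non-trivial point, namely that $c\in\RCcomplexrho$ may be infinite, via moderateness $|c|\le\diff\rho^{-N}$ and the reindexing of $q$ recorded in Rem.~\ref{rem:rholimit}.
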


\subsection{\protect\label{subsec:Complex-differentiable-function}Complex generalized
differentiable function}

Having the previous notion of weak little-oh relation, it would now
be natural to use the linearization property \eqref{eq:linearization}
to define a pointwise complex differentiable function. However, considering
the following motivations, we prefer to introduce a stronger notion
of little-oh:
\begin{enumerate}[label=\arabic*)]
\item \label{enu:i}Def.~\ref{def:weakLittleoh} of weak little-oh do
not use the notion of basic function at all, and indeed it includes
the classical example given by $i(z):=1$, if $z$ is infinitesimal
and $i(z):=0$ otherwise. This function $i$ cannot be a GHF on any
neighbourhood $B_{r}(0)$ defined by a standard radius $r\in\R_{>0}$,
because, otherwise, its real and imaginary parts would be GSF (see
\ref{enu:GHF-GSF} of Thm.~\ref{thm:functionofderivative} below),
which is not the case because the intermediate value theorem does
not hold for them, see \cite[Cor.~48]{Giordano2021} and Sec.~\ref{subsec:Generalized-Smooth-Functions}
above). In particular, $i(z)$ cannot be globally defined $i(z)=[i_{\eps}(z_{\eps})]$
by a net of ordinary holomorphic functions $(i_{\eps})$. On the other
hand, the function $i$ seems well-behaving, at least if we restrict
$i$ to infinitesimal neighbourhoods and standard points, i.e.~on
balls of the type $B_{r}(c)$, where $c\in\CC$ and $r\approx0$,
e.g.~$r=\diff\rho^{Q}$.
\item \label{enu:i-series}We would then have the drawback that the function
$i$ does not coincide with its Taylor series in its entire set of
convergence, even if the latter is the trivial (constant) series.
Therefore, in general, if we define GHF using \eqref{eq:linearization}
and the weak little-oh, we cannot prove that if $f$ is a GHF, its
Taylor series converges to $f$ in the entire convergence set. All
this is related to the total disconnectedness property of $\RCcomplexrho$,
due to the fact that the set of infinitesimals is a clopen set.
\item \label{enu:globalNets}We therefore follow the idea to ask a relation
of the type \eqref{eq:linearization}, but remaining in the class
of basic functions for all the maps appearing in \eqref{eq:linearization},
$f(z_{0}+h)-f(z_{0})-h\cdot m$ and $o(h)=o(z_{0},h)$, and for all
the variables $z_{0}$ and $h$. In other words, the previous function
$i$ satisfies a property of the form ``for any standard point $z_{0}\in\CC$
there exists a net of functions $(i_{\eps})$ defining $i$ in an
infinitesimal neighbourhood of $z_{0}$ and such that the linearization
property \eqref{eq:linearization} holds'', whereas we want to restrict
to a stronger definition of the form ``there exists a net of functions
$(f_{\eps})$ globally defining $f$, and at the point $z_{0}\in\RCcomplexrho$
a linearization property of the form \eqref{eq:linearization} holds''.
\item \label{enu:strongLittle-oh}Finally, we also want to prove that $f'(z_{0}):=m$
is still a basic function defined by a net $(f'_{\eps})$ of ordinary
complex differentiable functions because this would ensure that our
class of GHF is not too large but remains strictly tied to regularizations
of GF using ordinary holomorphic ones, see Sec.~\ref{sec:Examples}.
Starting only from \eqref{eq:linearization}, this desired property
does not hold: in fact, since $[\rho_{\eps}^{1/\eps}]=0\in\RCcomplexrho$,
we could have $f_{\eps}(z_{0\eps}+h)=f_{\eps}(z_{0\eps})+h\cdot m_{\eps}+h\cdot o_{\eps}(z_{0\eps},h)+\rho_{\eps}^{1/\eps}\cdot n_{\eps}(z_{0\eps},h)$
for $\eps$ and $h$ small, even if $n_{\eps}(z_{0\eps},h)$ is not
holomorphic in its variables. For this reason, we ask that \eqref{eq:linearization}
holds also at the $\eps$-level and without any other non-regular
additive remainder.
\item \label{enu:betterThanCol}As we already mentioned in Sec.~\ref{subsec:The-Ring-ofRC},
one of the aim of this work is to prove, in this setting of GF, that
a condition of first order differentiability implies both the CRE
and the existence of all greater derivatives.
\end{enumerate}
We denote by $\bigO(\Omega)$ the space of holomorphic functions on
$\Omega$, where $\Omega$ is an open subset of $\CC$.
\begin{defn}
\label{def:complexdiff}Let $\emptyset\ne V\subseteq U\subseteq\RCcomplexrho$
and $f:U\ra\RCcomplexrho$ be a basic function. Then $f$ is said
to be $\RCcomplexrho$\textit{-differentiable on $V$} if there exist
$U_{0}\subseteq\RCcomplexrho$ and nets $f_{\eps}:U_{\eps}\ra\CC$,
$r_{\eps}:V_{\eps}\times U_{0\eps}\ra\CC$ such that:
\begin{enumerate}
\item \label{enu:neighborhood}$\forall z\in V\,\exists s\in\RCrealrho_{>0}:\ z+B_{s}(0)\subseteq z+U_{0}\subseteq U$,
\item \label{enu:f_eps}$(f_{\eps})$ defines $f$,
\item \label{enu:r_eps}$(r_{\eps})$ defines a basic function $r:V\times U_{0}\ra\RCcomplexrho$.
\end{enumerate}
For all $z\in V$, we have:
\begin{enumerate}[resume]
\item \label{enu:limit-r}$\hyplimf{\rho}{h}r(z,h)=0$.
\item \label{enu:epsLinDiff}There exists $[m_{\eps}]=m\in\RCcomplexrho$
such that for all representatives $[z_{\eps}]=z\in V$ and all $[h_{\eps}]\in U_{0}$,
we have
\begin{align}
\forall^{0}\eps:\ f_{\eps}(z_{\eps}+h_{\eps}) & =f_{\eps}(z_{\eps})+h_{\eps}\cdot m_{\eps}+h_{\eps}\cdot r_{\eps}(z_{\eps},h_{\eps}).\label{eq:epswisecomplexdiff}\\
r_{\eps}(z_{\eps},0) & =0=\lim_{h\rightarrow0}{r_{\eps}(z_{\eps},h)}.\label{eq:limitofresidue}
\end{align}
\end{enumerate}
If conditions \ref{enu:neighborhood} - \ref{enu:epsLinDiff} hold,
we simply write
\begin{equation}
f(z+h)=f(z)+h\cdot m+o(h)\text{ as }h\to0\text{ in }z\in V.\label{eq:generalizedComplexDiff1}
\end{equation}
The relation $o(h)$ is called \emph{strong little-oh}, or simply
\emph{little-oh}.

\noindent Finally, we say that:
\begin{enumerate}[label=\arabic*)]
\item $f$ is $\RCcomplexrho$\emph{-differentiable} \emph{at} $z_{0}$
if it is $\RCcomplexrho$-differentiable on $V=\{z_{0}\}$.
\item $f$ is a \textit{generalized holomorphic function }\textit{\emph{(GHF)}}
at $z_{0}$ if there exists a neighbourhood $V\supseteq B_{r}(z_{0})$
of $z_{0}$ such that it is $\RCcomplexrho$-differentiable on $V$
and the functions $f_{\eps}$ of Def.~\ref{def:complexdiff} are
all holomorphic: $f_{\eps}\in\bigO(\Eball_{r_{\eps}}(z_{0\eps}))\ \forall\eps$,
where $r=[r_{\eps}]\in\RCrealrho_{>0}$ and $z_{0}=[z_{0\eps}]$.
\item \label{enu:defGHF}$f$ is a \textit{GHF on a set} $V\subseteq U$,
if $V$ is sharply open and $f$ is $\RCcomplexrho$-differentiable
on $V$ and the functions $f_{\eps}$ of Def.~\ref{def:complexdiff}
are all holomorphic: $f_{\eps}\in\bigO(U_{\eps})\ \forall\eps$.
\item We write $f\in\ghf(W,Y)$ if $f:W\ra Y$ and $f$ is a GHF on $W$,
or simply $f\in\ghf(W)$ if $Y=\RCcomplexrho$.
\end{enumerate}
\ 

\end{defn}

\begin{rem}
\label{rem:complexdiff}~
\begin{enumerate}[label=\alph*)]
\item Note explicitly that Def.~\ref{def:complexdiff} is of the form
$\exists(f_{\eps})\,\exists(r_{\eps})\,\forall z\in V$ and not $\forall z\in V\,\exists(f_{\eps})\,\exists(r_{\eps})$,
as we already mentioned in \ref{enu:globalNets} at the beginning
of the present section.
\item Property \ref{enu:neighborhood} of Def.~\ref{def:complexdiff} allows
us to consider both $f(z+h)$ and $r(z,h)$ for all $z\in V$ and
$h\in B_{s}(0)$. If we set $U_{0}(z):=\left\{ h\in U_{0}\mid z+h\in U\right\} $
for each $z\in V$, we have $U_{0}(z)\supseteq B_{s}(0)$ and $U_{0}=\bigcup_{z\in V}U_{0}(z)$.
Moreover, \ref{enu:neighborhood} also implies that $V$ is contained
in the sharply interior of $U$.
\item \label{enu:epsdiff}By contradiction, it is not hard to prove that
\eqref{eq:epswisecomplexdiff} implies
\begin{equation}
\forall^{0}\eps\,\forall h\in\Eball_{s_{\eps}}(0):\ f_{\eps}(z_{\eps}+h)=f_{\eps}(z_{\eps})+h\cdot m_{\eps}+h\cdot r_{\eps}(z_{\eps},h),\label{eq:forall-h}
\end{equation}
where $z+\overline{B}_{s}(0)\subseteq z+U_{0}\subseteq U$. In fact,
assume that for some $J\subseteq_{0}I$, we can find $h_{\eps}\in\Eball_{s_{\eps}}(0)$
for $\eps\in J$ and such that 
\begin{equation}
f_{\eps}(z_{\eps}+h_{\eps})\neq f_{\eps}(z_{\eps})+h_{\eps}\cdot m_{\eps}+h_{\eps}\cdot r_{\eps}(z_{\eps},h_{\eps}).\label{eq:notforall-h}
\end{equation}
Defining $h_{\eps}:=0$ for $\eps\in I\setminus J$, we have $[h_{\eps}]\in[\Eball_{s_{\eps}}(0)]=\overline{B}_{s}(0)\subseteq U_{0}$,
and hence \eqref{eq:forall-h} contradicts \eqref{eq:epswisecomplexdiff}.
From \eqref{eq:forall-h} and \eqref{eq:limitofresidue} of Def.~\ref{def:complexdiff},
it follows that for $\eps$ small, each function $f_{\eps}$ is differentiable
at $z_{\eps}\in U_{\eps}$.
\item Properties \ref{enu:limit-r} and \eqref{eq:epswisecomplexdiff} of
Def.~\ref{def:complexdiff} imply 
\begin{equation}
f(z+h)=f(z)+h\cdot m+h\cdot r(z,h),\label{eq:generalizedComplexDiff2}
\end{equation}
for all $z\in V$ and all $h\in U_{0}(z)$, i.e.~a weak little-oh
relation of the form \eqref{eq:linearization}. Clearly, all the present
construction of GHF is based on the classical theory of holomorphic
functions, and indeed we are using a net $(f_{\eps})\in\bigO(U_{\eps})$
in Def.~\ref{def:complexdiff}.\ref{enu:defGHF} of GHF; however,
the notion of differentiability \eqref{eq:generalizedComplexDiff2}
or \eqref{eq:generalizedComplexDiff1} for the \emph{generalized}
function $f$ is stated only as a first order condition. Up to this
point, is hence an open problem both whether the CRE holds for a GHF
$f$ and how to even define higher order derivatives.
\item \label{enu:sheafHol}It is important to note that conditions \eqref{eq:epswisecomplexdiff}
and \eqref{eq:generalizedComplexDiff2}, express a condition only
on generalized points $z=[z_{\eps}]\in V$, and not on standard points
$z\in U_{\eps}$, which instead is considered by the properties of
the type $f_{\eps}\in\bigO(\Eball_{r_{\eps}}(z_{0\eps}))$ or $f_{\eps}\in\bigO(U_{\eps})$.
For example, we can even have that $V=\{[z_{\eps}]\}$ is given by
a single infinite point.
\item We recall that in the case $V=\{z_{0}\}$, a basic function $r:\{z_{0}\}\times U_{0}\ra\RCcomplexrho$
is defined by $r_{\eps}:V_{\eps}\times U_{0\eps}\ra\CC$ if $\{z_{0}\}\times U_{0}\subseteq\sint{V_{\eps}\times U_{0\eps}}$,
and $\left[r_{\eps}(\bar{z}_{0\eps},\bar{h}_{\eps})\right]=r(z_{0},h)=\left[r_{\eps}(z_{0\eps},h_{\eps})\right]$
for all representatives $[\bar{z}_{0\eps}]=z_{0}=[z_{0\eps}]$ and
all $h=[h_{\eps}]=[\bar{h}_{\eps}]\in U_{0}$, see Def.~\ref{def:basicF}.
Note that from $0\in U_{0}$ it also follows $(z_{0\eps},0)\in V_{\eps}\times U_{0\eps}$
for $\eps$ small.
\item Let $q\in\N$. Since $\lim_{h\rightarrow0}r(z,h)=0$, then there exists
$H\in\RCrealrho_{>0}$ such that $\forall h\in U_{0}(z):0<|h|<H$,
we have $|r(z,h)|<\diff\rho^{q}$. Hence, 
\[
\left|\frac{f(z+h)-f(z)}{h}-m\right|=|r(z,h)|<\diff\rho^{q},\quad\forall h\in U_{0}(z):0<|h|<H.
\]
Recall Lem.~\ref{lem:Invertible-generalized-complex} about the density
of invertible elements, and that $|h|>0$ means that $|h|$ is invertible.
In the other words, we have 
\[
m=\lim_{h\rightarrow0}\frac{f(z+h)-f(z)}{h}.
\]
Therefore, $m$ is unique and we can define 
\[
f'(z)=m=\lim_{h\rightarrow0}\frac{f(z+h)-f(z)}{h}
\]
which is called \textit{the derivative of $f$} \emph{at }$z\in V$.
Here, we can only conclude that $f'(z)$ is a number dependent on
$z$. To prove that $f'(-)$ is a basic function, we need the following:
\end{enumerate}
\end{rem}

\begin{thm}
\label{thm:functionofderivative}In the assumptions of Def.~\ref{def:complexdiff},
let $f$ be a \emph{GHF} on $U$ and $[z_{\eps}]=z\in V$. Then, we
have:
\begin{enumerate}
\item \label{enu:f_eps_locHol}$\exists Q\in\R_{>0}\,\forall^{0}\eps:\ f_{\eps}|_{B_{\rho_{\eps}^{Q}}(z_{\eps})}$
is a holomorphic function and $f'_{\eps}(z_{\eps})=m_{\eps}$ for
$\eps$ small, so that $f'(z)=[f'_{\eps}(z_{\eps})]$.
\item \label{enu:epsDerAreModerate}$\exists Q,R\in\R_{>0}\,\forall^{0}\eps\,\forall n\in\N:\ \left|\frac{f_{\eps}^{(n)}(z_{\eps})}{n!}\right|\le\rho_{\eps}^{-nQ-R}$.
In particular, all $\eps$-wise derivatives are $\rho$-moderate:
$\forall n\in\N:\ \left(f_{\eps}^{(n)}(z_{\eps})\right)\in\CC_{\rho}$.
\item \label{enu:r_epsLocLip}The function $r_{\eps}$ in Def.~\ref{def:complexdiff}
satisfies 
\[
\exists[a_{\eps}]\in\RCrealrho_{>0}\,\forall^{0}\eps\,\forall h\in\Eball_{a_{\eps}}(z_{\eps}):\ r_{\eps}(z_{\eps},h)=\sum_{n=2}^{\infty}\frac{f^{(n)}(z_{\eps})}{n!}h^{n-1}.
\]
Therefore, it is locally Lipschitz in the variable $h$, and hence
the $\rho$-limit Def.~\ref{def:complexdiff}.\ref{enu:limit-r}
always follows by the $\eps$-wise limit Def.~\ref{def:complexdiff}.\ref{enu:epsLinDiff}.
\item \label{enu:r_epsIntForm}The function $r_{\eps}$ in Def.~\ref{def:complexdiff}
can also be written as 
\begin{equation}
\exists[s_{\eps}]\in\RCrealrho_{>0}\,\forall^{0}\eps\,\forall h\in\Eball_{s_{\eps}}(z_{\eps}):\ r_{\eps}(z_{\eps},h)=\int_{0}^{1}f'_{\eps}(z_{\eps}+ht)\,\diff t-f'_{\eps}(z_{\eps}).\label{eq:intForm}
\end{equation}
\item \label{enu:GHF-GSF}Real and imaginary parts of $f$ are GSF defined
on $\left\{ (x,y)\in\RCrealrho^{2}\mid x+iy\in U\right\} $ and hence
$f$ is locally Lipschitz.
\item \label{enu:derivativefunction2}If $U$ is sharply open and $f\in\ghf(U)$,
the derivative of $f$, $f'\in\ghf(U)$. Therefore, recursively defining
the derivatives of $f$, we have that $f^{(n)}\in\ghf(U)$ for all
$n\in\N$.
\item \label{enu:rLocLip}The function $r:=[r_{\eps}(-,-)]$ is locally
Lipschitz in both variables $z$, $h$.
\end{enumerate}
\end{thm}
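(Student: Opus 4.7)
The plan is to prove (i)--(vii) in the stated order, since each later item rests on the holomorphy of the defining nets $f_\eps$ near $z_\eps$ together with $\rho$-moderation of their Taylor coefficients, both of which are established in (i)--(ii). For (i), Def.~\ref{def:complexdiff}.\ref{enu:neighborhood} yields some $s \in \RCrealrho_{>0}$ with $z + B_s(0) \subseteq U$, and Lem.~\ref{lem:mayer} lets me pick $Q \in \N$ with $\diff\rho^Q < s$, so that $\overline{B_{\diff\rho^Q}(z)} \subseteq U$. By \eqref{eq:closedBallStr}, $\overline{\Eball_{\rho_\eps^Q}(z_\eps)} \subseteq U_\eps$ for $\eps$ small, hence $f_\eps$ is holomorphic on this disk; dividing \eqref{eq:epswisecomplexdiff} by $h_\eps \ne 0$ and invoking \eqref{eq:limitofresidue} then identifies $m_\eps = f'_\eps(z_\eps)$.

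For (ii), Cauchy's integral formula on $|w - z_\eps| = \rho_\eps^Q$ gives $|f^{(n)}_\eps(z_\eps)/n!| \le M_\eps/\rho_\eps^{nQ}$ with $M_\eps := \max_{|w - z_\eps| = \rho_\eps^Q}|f_\eps(w)|$, so it suffices to show $M_\eps \le \rho_\eps^{-R}$ for some $R \in \N$ independent of $\eps$. This \emph{uniformization} of the pointwise moderation of $f$ is the main technical obstacle, and I expect to handle it by contradiction via subpoints: if the bound fails, there exist $L \subseteq_0 I$ and, for each $\eps \in L$, a point $w_\eps$ on $\partial\Eball_{\rho_\eps^Q}(z_\eps)$ with $|f_\eps(w_\eps)| > \rho_\eps^{-n}$ for every $n$; prolonging $(w_\eps)$ arbitrarily off $L$ produces $[w_\eps] \in \overline{B_{\diff\rho^Q}(z)} \subseteq U$ at which $f$ fails to be moderate, contradicting $f(w) \in \RCcomplexrho$.

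For (iii)--(iv), the Taylor expansion $f_\eps(z_\eps + h) = \sum_{n \ge 0} \frac{f^{(n)}_\eps(z_\eps)}{n!} h^n$ on $\Eball_{\rho_\eps^Q}(0)$, combined with (i) and the fact that $r_\eps(z_\eps, h)$ is pinned down for $h \ne 0$ by \eqref{eq:epswisecomplexdiff}, gives the series formula; the Cauchy bounds of (ii) produce a $\rho$-moderate geometric majorant on $|h| \le \rho_\eps^Q/4$, so $r_\eps$ is locally Lipschitz in $h$ with $\rho$-moderate constant, and together with \eqref{eq:limitofresidue} this upgrades the $\eps$-wise limit to the $\rho$-limit of Def.~\ref{def:complexdiff}.\ref{enu:limit-r}. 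Formula \eqref{eq:intForm} is then the fundamental theorem of calculus along the segment $[z_\eps, z_\eps + h] \subseteq \Eball_{\rho_\eps^Q}(z_\eps)$, after subtracting $h \cdot f'_\eps(z_\eps)$ from $f_\eps(z_\eps + h) - f_\eps(z_\eps)$.

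For (v), the pullback $F_\eps(x,y) := f_\eps(x+iy)$ is $\Coo$, and by iterating the Cauchy--Riemann equations each $\partial^\alpha F_\eps(x_\eps, y_\eps)$ is a $\CC$-linear combination of $f^{(k)}_\eps(z_\eps)$ with $k \le |\alpha|$, hence $\rho$-moderate by (ii); Def.~\ref{def:GSF} is thereby satisfied by the real and imaginary parts of $F_\eps$, and local Lipschitzness of $f$ follows from $f(z+h) - f(z) = h \int_0^1 f'_\eps(z_\eps + ht)\,\diff t$ together with a moderate sup bound on $f'$ coming from (ii). For (vi), applying the same scheme to $f'$ and using termwise differentiation of the Taylor series gives $f'_\eps(z_\eps + h) = f'_\eps(z_\eps) + h f''_\eps(z_\eps) + h s_\eps(z_\eps, h)$ with $s_\eps \to 0$ as $h \to 0$, verifying every clause of Def.~\ref{def:complexdiff} for $f'$ on the sharply open $U$; iteration then yields $f^{(n)} \in \ghf(U)$ for all $n$. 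Finally, (vii) is obtained by differencing \eqref{eq:intForm} in both arguments and bounding $|f''_\eps|$ on a moderate disk via (ii) applied to $f'$, which produces a joint local Lipschitz constant for $r$ in $(z, h)$.
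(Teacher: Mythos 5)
Your proposal is correct and follows essentially the same route as the paper's proof: local holomorphy of $f_\eps$ near $z_\eps$ via strongly internal sets and \eqref{eq:closedBallStr}, Cauchy estimates on a circle of moderate radius combined with the observation that the maximizing point of $|f_\eps|$ on that circle is itself a generalized point of $U$ (where moderateness of the basic function $f$ forces $M_\eps\le\rho_\eps^{-R}$), the Taylor-series and fundamental-theorem-of-calculus representations of $r_\eps$, and iteration of the whole scheme on $f'$ for the higher derivatives. The only caveat is cosmetic: in (ii) the negation should be set up as a diagonalization (choose $\eps_n\downarrow 0$ with $M_{\eps_n}>\rho_{\eps_n}^{-n}$), since for a fixed $\eps$ one cannot have $|f_\eps(w_\eps)|>\rho_\eps^{-n}$ for every $n$; the paper sidesteps this by taking the maximizer net directly and invoking moderateness of $f$ at $[\bar z_\eps]\in U$.
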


\begin{proof}
\noindent\ref{enu:f_eps_locHol}: In fact $z=[z_{\eps}]\in V\subseteq U$
and $U\subseteq\sint{U_{\eps}}$ because $(f_{\eps})$ defines $f$.
Therefore, there exists $Q\in\N$ such that $\overline{B_{\diff\rho^{Q}}(z)}\subseteq\sint{U_{\eps}}$
because $\sint{U_{\eps}}$ is a sharply open set (see Def.~\ref{def:intStr}),
and hence $\Eball_{\rho_{\eps}^{Q}}(z_{\eps})\subseteq U_{\eps}$
for $\eps$ small by \eqref{eq:closedBallStr}. This yields the conclusion
because $f_{\eps}\in\mathcal{O}(U_{\eps})$. Finally, \eqref{eq:forall-h}
and \eqref{eq:limitofresidue} imply $f_{\eps}'(z_{\eps})=m_{\eps}$
for small $\eps$, so that $f'(z)=m=[m_{\eps}]=[f'_{\eps}(z_{\eps})]$.

\noindent\ref{enu:epsDerAreModerate}: From Def.~\ref{def:complexdiff}.\ref{enu:neighborhood}
we can find $a=[a_{\eps}]$, $b=[b_{\eps}]\in\RCrealrho_{>0}$ with
$a<b$ and such that $B_{a}(z)\subseteq\overline{B_{b}(z)}\subseteq U$.
As we proved in \ref{enu:f_eps_locHol}, we have $f_{\eps}\in\mathcal{O}(\Eball_{b_{\eps}}(z_{\eps}))$
for $\eps$ small, and hence for these $\eps$ Cauchy's formula yields
\begin{equation}
f_{\eps}^{(n)}(z_{\eps})=\frac{n!}{2\pi i}\int_{\gamma}\frac{f_{\eps}(z)}{(z-z_{\eps})^{n+1}}\diff z,\label{eq:Cauchyformula}
\end{equation}
where $\gamma$ is the counterclockwise oriented circle forming the
boundary of $\Eball_{a_{\eps}}(z_{\eps})$. Therefore, $|f_{\eps}^{(n)}(z_{\eps})|\le n!a_{\eps}^{-n}\sup_{z\in\gamma}|f_{\eps}(z)|$.
Since $f_{\eps}$ is continuous and $\gamma\subseteq\CC$ is compact,
we have $\sup_{z\in\gamma}|f_{\eps}(z)|=|f_{\eps}(\bar{z}_{\eps})|$
for some $\bar{z}_{\eps}\in\gamma$, and hence $\bar{z}:=[\bar{z}_{\eps}]\in U$.
Thereby $\left(\left|f_{\eps}(\bar{z}_{\eps})\right|\right)\in\R_{\rho}$
because $f:U\ra\RCcomplexrho$ is a basic function, and this proves
claim \ref{enu:epsDerAreModerate}, where we can take $Q\in\R_{>0}$
such that $a\ge\diff\rho^{Q}$, hence depending only on $a$, and
$R\in\R_{>0}$ such that $|f_{\eps}(\bar{z}_{\eps})|\le\rho_{\eps}^{-R}$.
Moreover, if $\eps_{0}\in I$ satisfies both $a_{\eps}\ge\rho_{\eps}^{Q}$
and $|f_{\eps}(\bar{z}_{\eps})|\le\rho_{\eps}^{-R}$ for all $\eps\le\eps_{0}$,
then for all $n\in\N$ we also have $\left|f_{\eps}^{(n)}(z_{\eps})\right|\le n!\rho_{\eps}^{-nQ-R}$.

\noindent\ref{enu:r_epsLocLip}: Taking $a$, $b\in\RCrealrho_{>0}$
as above, we have that the Taylor formula
\begin{equation}
f_{\eps}(z)=\sum_{n=0}^{\infty}\frac{f_{\eps}^{(n)}(z_{\eps})}{n!}(z-z_{\eps})^{n}\label{eq:Taylorformula}
\end{equation}
pointwise converges for $z\in\Eball_{a_{\eps}}(z_{\eps})$. The conclusion
now follows from \eqref{eq:forall-h}.

\noindent\ref{enu:r_epsIntForm}: From \eqref{eq:forall-h}, for
$h\in\Eball_{s_{\eps}}(z_{\eps})\setminus\{0\}$ it suffices to multiply
by $h$ both sides of \eqref{eq:intForm} and integrate. For $h=0$,
the equality follows from $r_{\eps}(z_{\eps},0)=0$.

\noindent\ref{enu:GHF-GSF}: This follows from \ref{enu:f_eps_locHol},
\ref{enu:epsDerAreModerate} and Def.~\ref{def:GSF}.

\noindent\ref{enu:derivativefunction2}: Set $U'_{0}:=\bigcup\left\{ B_{a}(0)\mid a>0,\ \exists b>a\,\exists z\in U:\ \overline{B_{b}(z)}\subseteq U\right\} $.
For each $z\in U$, from Def.~\ref{def:complexdiff}.\ref{enu:neighborhood}
(for $V=U$), it follows the existence of $a$, $b\in\RCrealrho_{>0}$
such that $a<b$ and $z+B_{a}(0)\subseteq\overline{B_{b}(z)}\subseteq z+U_{0}\subseteq U$.
Thereby, $B_{a}(0)\subseteq U'_{0}$ and hence $z+B_{a}(0)\subseteq z+U'_{0}\subseteq U$.
This shows property \ref{enu:neighborhood} for $U'_{0}$. The map
$z=[z_{\eps}]\in U\mapsto[f'_{\eps}(z_{\eps})]\in\RCcomplexrho$ is
basic because of property \ref{enu:epsDerAreModerate} and because
if $\left(z_{\eps}\right)\simeq_{\rho}\left(\bar{z}_{\eps}\right)$,
then $\left|f'_{\eps}(z_{\eps})-f'_{\eps}(\bar{z}_{\eps})\right|\le|z_{\eps}-\bar{z}_{\eps}|\left|f''_{\eps}(w_{\eps})\right|\simeq_{\rho}0$
for some $w_{\eps}$ in the segment $[z_{\eps},\bar{z}_{\eps}]$.
Using \ref{enu:r_epsIntForm}, we can define $r'_{\eps}(z,h):=\int_{0}^{1}f''_{\eps}(z+ht)\,\diff t-f''_{\eps}(z)$
for all $z\in U_{\eps}$ and all $h$ such that $[z,z+h]\subseteq U_{\eps}$.
Once again from \ref{enu:epsDerAreModerate} it follows that $r_{\eps}$
defines a basic function of the type $U\times U'_{0}\ra\RCcomplexrho$.
Proceeding as in \ref{enu:r_epsIntForm}, we can also prove that $f'_{\eps}(z_{\eps}+h_{\eps})=f'_{\eps}(z_{\eps})+h_{\eps}\cdot f''_{\eps}(z_{\eps})+h_{\eps}\cdot r'_{\eps}(z_{\eps},h_{\eps})$.

\noindent\ref{enu:rLocLip}: It follows from \ref{enu:derivativefunction2}
and \ref{enu:GHF-GSF} that $f'$ is locally Lipschitz. The claim
then follows from \ref{enu:r_epsIntForm}.
\end{proof}
Using the natural definition of differentiability and Lem.~\ref{lem:littleoh},
it is possible to give intrinsic proof (i.e. without using nets of
functions that define given basic functions) of several classical
theorems of differential calculus, such as algebraic properties and
chain rule.
\begin{thm}
Let $U\subseteq\RCcomplexrho$ be a sharply open set, $f$, $g:U\ra\RCcomplexrho$
be basic functions, $z_{0}\in U$, and $c\in\RCcomplexrho$. If $f$,
$g$ are $\RCcomplexrho$-differentiable at $z_{0}$, then
\begin{enumerate}
\item $f+g$ is $\RCcomplexrho$-differentiable at $z_{0}$ and $(f+g)'(z_{0})=f'(z_{0})+g'(z_{0})$;
\item $cf$ is $\RCcomplexrho$-differentiable at $z_{0}$ and $(cf)'(z_{0})=cf'(z_{0})$;
\item $fg$ is $\RCcomplexrho$-differentiable at $z_{0}$ and $(fg)'(z_{0})=f'(z_{0})g(z_{0})+f(z_{0})g'(z_{0})$;
\item If $g(z_{0})$ is invertible, then $\left(\frac{f}{g}\right)'(z_{0})=\frac{f'(z_{0})g(z_{0})-g'(z_{0})f(z_{0})}{g(z_{0})^{2}}$.
\end{enumerate}
\end{thm}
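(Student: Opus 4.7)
The plan is to follow the classical calculus proofs while verifying the requirements of Def.~\ref{def:complexdiff} both at the $\eps$-wise level (properties \eqref{eq:epswisecomplexdiff}--\eqref{eq:limitofresidue}) and at the generalized level (property \ref{enu:limit-r}). By hypothesis, we have defining nets $(f_\eps)$, $(g_\eps)$, $(r_{f,\eps})$, $(r_{g,\eps})$ and representatives $c=[c_\eps]$, $z_0=[z_{0\eps}]$ such that
\begin{align*}
f_\eps(z_{0\eps}+h) &= f_\eps(z_{0\eps}) + h\, f'_\eps(z_{0\eps}) + h\, r_{f,\eps}(z_{0\eps},h),\\
g_\eps(z_{0\eps}+h) &= g_\eps(z_{0\eps}) + h\, g'_\eps(z_{0\eps}) + h\, r_{g,\eps}(z_{0\eps},h)
\end{align*}
for small $\eps$ and $h$ in an $\eps$-wise disk, with the residual nets vanishing at $h=0$ and tending to $0$ as $h\to 0$ in the Euclidean sense.

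For the sum and the scalar multiple I would take the natural defining nets $(f_\eps+g_\eps)$, $(c_\eps f_\eps)$ and the natural residual nets $(r_{f,\eps}+r_{g,\eps})$, $(c_\eps r_{f,\eps})$. The $\eps$-wise linearization and limit follow by linearity of the two given identities, and $\rho$-moderateness of the new nets is immediate because $\R_\rho$ is a ring, so the required basic function status on $\{z_0\}\times U_0$ holds. The weak little-oh condition Def.~\ref{def:complexdiff}.\ref{enu:limit-r} is then precisely the content of Lem.~\ref{lem:littleoh}.

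For the product I would algebraically expand the two linearizations and define the residual
\[
s_\eps(z,h) := g_\eps(z)\, r_{f,\eps}(z,h) + f_\eps(z)\, r_{g,\eps}(z,h) + h[f'_\eps(z)+r_{f,\eps}(z,h)][g'_\eps(z)+r_{g,\eps}(z,h)],
\]
so that $(fg)_\eps(z_{0\eps}+h) = (fg)_\eps(z_{0\eps}) + h[f'_\eps g_\eps + f_\eps g'_\eps](z_{0\eps}) + h\, s_\eps(z_{0\eps},h)$ is a pure algebraic identity. The $\eps$-wise conditions \eqref{eq:epswisecomplexdiff}--\eqref{eq:limitofresidue} follow directly; moderateness of $(s_\eps)$, needed to ensure that it defines a basic function, follows from moderateness of $f_\eps$, $g_\eps$, $r_{f,\eps}$, $r_{g,\eps}$ evaluated at $(z_{0\eps},h_\eps)$ and, for the $f'_\eps$, $g'_\eps$ factors, from Thm.~\ref{thm:functionofderivative}.\ref{enu:epsDerAreModerate}. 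The $\rho$-limit condition is again reduced to repeated applications of Lem.~\ref{lem:littleoh}.

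The quotient case is where I expect the main technical obstacle. It suffices to handle $1/g$ and combine with the product rule. Since $g(z_0)$ is invertible, Lem.~\ref{lem:mayer} yields $m\in\N$ with $|g_\eps(z_{0\eps})|>\rho_\eps^m$ for small $\eps$; the $\eps$-wise differentiability of $g_\eps$ (Rem.~\ref{rem:complexdiff}.\ref{enu:epsdiff}), which entails continuity, then provides $Q\in\R_{>0}$ such that $g_\eps$ is non-vanishing and bounded below by $\tfrac{1}{2}\rho_\eps^m$ on $\Eball_{\rho_\eps^{Q}}(z_{0\eps})$ for small $\eps$. On this disk the elementary identity
\[
\frac{1}{g_\eps(z_{0\eps}+h)} - \frac{1}{g_\eps(z_{0\eps})} = -\frac{h\, g'_\eps(z_{0\eps}) + h\, r_{g,\eps}(z_{0\eps},h)}{g_\eps(z_{0\eps}+h)\, g_\eps(z_{0\eps})}
\]
isolates the derivative $-g'(z_0)/g(z_0)^2$ and an explicit remainder whose $\rho$-moderateness and vanishing limit follow from the uniform lower bound on $|g_\eps|$ together with the already-established properties of $r_{g,\eps}$. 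Applying the product rule to $f\cdot(1/g)$ then yields the claimed formula. The delicate point throughout is ensuring that the constructed defining nets are $\rho$-moderate and, in the quotient case, that invertibility of $g$ extends uniformly in $\eps$ from the single point $z_{0\eps}$ to an honest Euclidean neighbourhood of it, so that $1/g_\eps$ is a genuinely holomorphic factor on which the residual calculation can be carried out.
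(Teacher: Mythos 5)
Your proof is correct, but it takes a more explicit route than the paper, which in fact gives no proof of this theorem at all: the authors only remark that an \emph{intrinsic} argument is possible, i.e.\ one that manipulates the basic remainder functions $r(z_{0},h)$ directly via the little-oh calculus of Lem.~\ref{lem:littleoh}, exactly as they do for the chain rule in Thm.~\ref{thm:chain}. You instead descend to representatives and verify the $\eps$-wise conditions \eqref{eq:epswisecomplexdiff}--\eqref{eq:limitofresidue} by hand. The two approaches buy different things: the intrinsic proof is shorter and formally identical to the classical one, but it silently relies on the fact that all the algebraic manipulations (sums, products by moderate nets) descend to the defining nets, which is precisely what the \emph{strong} little-oh of Def.~\ref{def:complexdiff} demands; your version makes this descent explicit, at the cost of some bookkeeping (intersecting the domains $U_{\eps}$, $U_{0\eps}$ of the two given nets, and noting that moderateness of $m_{\eps}=f'_{\eps}(z_{0\eps})$ is already built into Def.~\ref{def:complexdiff}.\ref{enu:epsLinDiff}, so the appeal to Thm.~\ref{thm:functionofderivative}.\ref{enu:epsDerAreModerate} is not needed and, strictly speaking, that item is stated only for GHF).

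One point in the quotient case deserves a more careful justification than ``differentiability entails continuity'': pointwise continuity of each $g_{\eps}$ at $z_{0\eps}$ only yields a radius $\delta_{\eps}$ that could a priori decay faster than every $\rho_{\eps}^{Q}$, so it does not by itself produce the disk $\Eball_{\rho_{\eps}^{Q}}(z_{0\eps})$ on which $|g_{\eps}|\ge\tfrac{1}{2}\rho_{\eps}^{m}$. What saves you is the $\eps$-wise linearization \eqref{eq:forall-h} together with a uniform moderate bound $|r_{g,\eps}(z_{0\eps},h)|\le\rho_{\eps}^{-N}$ for all $h\in\Eball_{s_{\eps}}(0)$ and $\eps$ small (obtained by the same contradiction/subpoint argument used for \eqref{eq:forall-h}, or via Thm.~\ref{thm:Lipschitzcond}): this gives $|g_{\eps}(z_{0\eps}+h)-g_{\eps}(z_{0\eps})|\le|h|\,\rho_{\eps}^{-N'}$, and then $Q\ge N'+m+1$ does the job. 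With that estimate in place, your identity for $1/g_{\eps}$ and the product rule close the argument.
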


\noindent For example, the following result proves, replicating the
classical proof, that GHF are closed with respect to composition and
hence to all holomorphic nonlinear operations:
\begin{thm}
\label{thm:chain}Let $U$, $V\subseteq\RCcomplexrho$ be sharply
open sets, $f:V\ra U$, $g:U\ra\RCcomplexrho$, and $z_{0}\in V$.
If $f$ is $\RCcomplexrho$-differentiable at $z_{0}$ and $g$ is
$\RCcomplexrho$-differentiable at $f(z_{0})$, then $(g\circ f)$
is $\RCcomplexrho$-differentiable at $z_{0}$ and 
\[
(g\circ f)'(z_{0})=g'(f(z_{0}))f'(z_{0}).
\]
\end{thm}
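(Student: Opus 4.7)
The plan is to mimic the classical substitution proof while keeping careful track of the $\eps$-wise structure required by Def.~\ref{def:complexdiff}. First I would fix defining nets: $(f_\eps)$ for $f$ with remainder $(r^f_\eps)$ and multiplier $f'_\eps(z_{0\eps})$ at $z_0 = [z_{0\eps}]$, and $(g_\eps)$ for $g$ with remainder $(r^g_\eps)$ and multiplier $g'_\eps(f_\eps(z_{0\eps}))$ at $f(z_0) = [f_\eps(z_{0\eps})]$, the latter identification being permitted by Thm.~\ref{thm:functionofderivative}.\ref{enu:f_eps_locHol}. For $\eps$ small and $h_\eps$ in a suitable infinitesimal neighbourhood of $0$, I would substitute
\[
k_\eps(h_\eps) := f_\eps(z_{0\eps} + h_\eps) - f_\eps(z_{0\eps}) = h_\eps\bigl[f'_\eps(z_{0\eps}) + r^f_\eps(z_{0\eps}, h_\eps)\bigr]
\]
into the expansion of $g_\eps$ at $f_\eps(z_{0\eps})$, obtaining after collecting terms
\[
g_\eps(f_\eps(z_{0\eps} + h_\eps)) = g_\eps(f_\eps(z_{0\eps})) + h_\eps \cdot g'_\eps(f_\eps(z_{0\eps})) f'_\eps(z_{0\eps}) + h_\eps \cdot R_\eps(z_{0\eps}, h_\eps),
\]
with $R_\eps(z_{0\eps}, h_\eps) := r^f_\eps(z_{0\eps}, h_\eps)\, g'_\eps(f_\eps(z_{0\eps})) + [f'_\eps(z_{0\eps}) + r^f_\eps(z_{0\eps}, h_\eps)]\, r^g_\eps(f_\eps(z_{0\eps}), k_\eps(h_\eps))$. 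Taking classes gives $m = g'(f(z_0)) f'(z_0)$, the claimed derivative.

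Next I would verify that $(g_\eps \circ f_\eps)$ and $(R_\eps)$ fulfill the requirements of Def.~\ref{def:complexdiff} for $V = \{z_0\}$. Property \ref{enu:epsLinDiff} is immediate by construction, while $R_\eps(z_{0\eps}, 0) = 0$ holds because $k_\eps(0) = 0$, $r^f_\eps(z_{0\eps}, 0) = 0$ and $r^g_\eps(f_\eps(z_{0\eps}), 0) = 0$. The $\eps$-wise limit $\lim_{h \to 0} R_\eps(z_{0\eps}, h) = 0$ follows from continuity of the three ingredients: $f_\eps$ is holomorphic, hence $k_\eps(h) \to 0$; $r^f_\eps(z_{0\eps}, h) \to 0$ by hypothesis; and $r^g_\eps$ is continuous in its second argument by Thm.~\ref{thm:functionofderivative}.\ref{enu:r_epsLocLip}.

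That $(g_\eps \circ f_\eps)$ defines the basic function $g \circ f$ on a sharp neighbourhood of $z_0$, and that $(R_\eps)$ defines a basic function on $\{z_0\} \times U_0$ for some sharp neighbourhood $U_0$ of $0$, reduces to checking that the expressions are independent of representatives modulo $\sim_\rho$. This follows from Thm.~\ref{thm:functionofderivative}.\ref{enu:rLocLip} and .\ref{enu:GHF-GSF} (local Lipschitz continuity of $f$, $r^f$ and $r^g$) together with the sharp continuity of $g' \circ f$ coming again from the same theorem. For the $\rho$-limit Def.~\ref{def:complexdiff}.\ref{enu:limit-r}, I would invoke Cor.~\ref{cor:epslimit}: since $R$ is locally Lipschitz in $h$ (an algebraic combination of locally Lipschitz basic functions) and its $\eps$-wise limit at $0$ vanishes, the sharp-topology limit $R(z_0, h) \to 0$ follows.

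I expect the main obstacle to be the neighbourhood bookkeeping required by Def.~\ref{def:complexdiff}.\ref{enu:neighborhood}: one must choose $U_0$ small enough that $f(z_0 + h)$ lies in the domain of $g$'s linearization for every $h \in U_0$ and uniformly over representatives, so that $r^g_\eps(f_\eps(z_{0\eps}), k_\eps(h_\eps))$ makes sense. This rests on the sharp continuity of $f$ at $z_0$ together with \eqref{eq:closedBallStr} to pass between sharp and $\eps$-wise Euclidean ball inclusions; taking $U_0 = B_{\diff\rho^Q}(0)$ for $Q$ sufficiently large then handles it, and all remaining verifications are routine.
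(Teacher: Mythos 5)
Your proposal is correct and follows essentially the same substitution argument as the paper's proof, just carried out at the level of the defining nets rather than intrinsically with the little-oh calculus of Lem.~\ref{lem:littleoh}. In fact your explicit remainder $R_\eps = r^f_\eps\, g'_\eps(f_\eps(z_{0\eps})) + \bigl[f'_\eps(z_{0\eps}) + r^f_\eps\bigr]\, r^g_\eps(f_\eps(z_{0\eps}), k_\eps)$ is the precise form of what the paper abbreviates as $g'(f(z_0))r_1(z_0,h) + r_2(z_0,h)$ (the paper's display silently absorbs both the change of increment in the second argument of $r_2$ and the factor $f'(z_0)+r_1(z_0,h)$), so your version is, if anything, the more careful one.
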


\begin{proof}
Since $f$ is $\RCcomplexrho$-differentiable at $z_{0}$ and $g$
is $\RCcomplexrho$-differentiable at $f(z_{0})$ then there exists
$U_{0}\subseteq\RCcomplexrho$ a sharply neigborhood of $0$, such
that $z_{0}+h\in V$ and $f(z_{0})+h\in U$ for all $h\in U_{0}$,
and 
\begin{align*}
f(z_{0}+h) & =f(z_{0})+f'(z_{0})h+o(h)\quad\text{and}\\
g(f(z_{0})+h) & =g(f(z_{0}))+g'(f(z_{0}))h+o(h)
\end{align*}
as $h\rightarrow0$. Therefore, there exist $\rho$-basic locally
Lipschitz functions $r_{1}(z_{0},-)$, $r_{2}(z_{0},-):U_{0}\ra\RCcomplexrho$
such that for all $h\in U_{0}$, we have
\begin{align*}
f(z_{0}+h) & =f(z_{0})+f'(z_{0})h+r_{1}(z_{0},h)h\\
g(f(z_{0})+h) & =g(f(z_{0}))+g'(f(z_{0}))h+r_{2}(z_{0},h)h.
\end{align*}
Then, for all $h\in U_{0}$ sufficiently small such that $f(z_{0}+h)\in U$,
we have
\begin{align*}
g(f(z_{0}+h)) & =g\left(f(z_{0})+f'(z_{0})h+r_{1}(z_{0},h)h\right)\\
 & =g(f(z_{0}))+g'(f(z_{0}))\left(f'(z_{0})h+r_{1}(z_{0},h)h\right)+r_{2}(z_{0},h)h\\
 & =g(f(z_{0}))+g'(f(z_{0}))f'(z_{0})h+\left[g'(f(z_{0}))r_{1}(z_{0},h)+r_{2}(z_{0},h)\right]h.
\end{align*}
The function $r(z_{0},h):=g'(f(z_{0}))r_{1}(z_{0},h)+r_{2}(z_{0},h)$,
then it satisfies \eqref{eq:generalizedComplexDiff1} of Def.~\ref{def:complexdiff},
from which the conclusion follows. 
\end{proof}
We close this section by characterizing sharply continuous functions
using the little-oh notations. We will use this result in the definition
of path integral in \cite{NuGi24a}. Using a little abuse of language,
in the following result we use the notion of strong little-oh $o(1)$,
which can be easily defined reformulating Def.~\ref{def:complexdiff}
with $m_{\eps}=0$ and replacing $h_{\eps}\cdot r_{\eps}(z_{\eps},h_{\eps})$
with $r_{\eps}(z_{\eps},h_{\eps})$, see also Def.~\ref{def:sheafTaylor}
below.
\begin{lem}
\label{lem:GC0LittleOh}Let $U\subseteq\RCcomplexrho$ be a sharply
open set, $f:U\ra\RCcomplexrho$ be a map and $z_{0}\in U$. Then
$f$ is sharply continuous at $z_{0}$ if and only if 
\[
f(z_{0}+h)-f(z_{0})=\bar{o}(1)\quad\text{as}\quad h\to0.
\]
The same relation holds for the strong little-oh $o(1)$ if $f$ is
a basic function defined by a net $f_{\eps}\in\mathcal{C}^{0}(U_{\eps})$
of continuous functions.
\end{lem}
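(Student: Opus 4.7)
The plan is to take the canonical witness $r(h):=f(z_0+h)-f(z_0)$ on the set $U_0:=\{h\in\RCcomplexrho\mid z_0+h\in U\}$, which is a sharp neighbourhood of $0$ since $U$ is sharply open and $z_0\in U$. With $f_2\equiv 1$, Def.~\ref{def:weakLittleoh} asks precisely for a function $r:U_0\to\RCcomplexrho$ with $\hyplimf{\rho}{h}r(h)=0$ satisfying $f(z_0+h)-f(z_0)=1\cdot r(h)$ pointwise on $U_0$. With the canonical $r$, the pointwise identity is tautological, so the whole content of the lemma is an equivalence between sharp continuity and the $\rho$-limit clause.

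For the forward implication of the weak version, I would unfold the $\rho$-limit condition on $r$ as $\forall q\in\N\,\exists H\in\RCrealrho_{>0}\,\forall h\in U_0:\ 0<|h|<H\Rightarrow|f(z_0+h)-f(z_0)|<\diff\rho^q$, and observe that this is literally the definition of sharp continuity of $f$ at $z_0$ restricted to $h$ with invertible $|h|$; the missing case $h=0$ is trivial since $f(z_0)-f(z_0)=0<\diff\rho^q$. The converse is entirely symmetric: any witness $r$ as in Def.~\ref{def:weakLittleoh} forces the same continuity estimate on $f$.

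For the strong little-oh variant I would exploit the basic structure of $f$ by setting $r_\eps(z_{0,\eps},h):=f_\eps(z_{0,\eps}+h)-f_\eps(z_{0,\eps})$, which is well defined on $\Eball_{\rho_\eps^Q}(0)$ for some $Q\in\N$ and $\eps$ small, using \eqref{eq:closedBallStr} and Lem.~\ref{lem:mayer} applied to a sharp ball contained in $U_0$. The net $(r_\eps)$ then defines a basic function $r:\{z_0\}\times U_0\to\RCcomplexrho$: the $\eps$-wise identity of Def.~\ref{def:complexdiff} with $m_\eps=0$ and $h_\eps\cdot r_\eps$ replaced by $r_\eps$ holds by construction, $r_\eps(z_{0,\eps},0)=0$ is automatic, and the $\eps$-wise limit $\lim_{h\to 0}r_\eps(z_{0,\eps},h)=0$ is precisely continuity of $f_\eps$ at $z_{0,\eps}$. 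The $\rho$-limit $\lim_{h\to 0}r(z_0,h)=0$ then coincides with sharp continuity of $f$ exactly as in the weak case. The converse is immediate because strong $o(1)$ entails weak $\bar{o}(1)$, and the weak equivalence already delivers sharp continuity.

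Overall, the argument is a direct unwrapping of Def.~\ref{def:weakLittleoh} and Def.~\ref{def:rho-limit} against the canonical witness. The only mild obstacle, easily dispatched, is the domain bookkeeping: checking that $U_0$ is a sharp neighbourhood of $0$ and, in the strong case, that the internal domain of each $r_\eps$ contains a ball $\Eball_{\rho_\eps^Q}(0)$ for $\eps$ small, which is exactly what sharp openness of $U$ together with \eqref{eq:closedBallStr} and Lem.~\ref{lem:mayer} provide. No new estimates beyond this are needed.
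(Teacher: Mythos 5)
Your proposal follows the same route as the paper: the paper's entire proof consists of naming the canonical witness $r_{\eps}(z,h):=f_{\eps}(z+h)-f_{\eps}(z)$ for the strong case and asserting that everything else follows directly from Def.~\ref{def:weakLittleoh}. You simply spell out what the paper leaves implicit (the forced uniqueness of $r$ because $f_{2}\equiv1$ is invertible, the domain bookkeeping via \eqref{eq:closedBallStr} and Lem.~\ref{lem:mayer}, the identification of the $\eps$-wise limit of $r_{\eps}$ with continuity of $f_{\eps}$ at $z_{0\eps}$), and all of that is consistent with the paper's argument.

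One step in your unwinding is stated too optimistically, namely the claim that passing from the limit clause to sharp continuity leaves only ``the missing case $h=0$''. The limit condition \eqref{eq:defhyperlimit} quantifies over $h$ with $0<|h|<H$, i.e.\ over increments whose modulus is \emph{invertible}, whereas sharp continuity \eqref{eq:contF} demands the estimate for every $h$ with $|h|<H$. The increments not covered by the limit clause are therefore all $h$ with $|h|$ non-invertible, and every ball $B_{H}(0)$ contains nonzero such $h$ (e.g.\ $h_{\eps}=\rho_{\eps}^{1/\eps}$ on a co-final set of $\eps$ and $h_{\eps}=H_{\eps}/2$ elsewhere). For a bare map $f$ this is a real gap: one can define maps that vanish on all increments with invertible modulus but not on some non-invertible one, so the implication from $\bar{o}(1)$ to sharp continuity needs either an extra argument exploiting the basic/continuous structure of $f$ (available in the strong case, by arguing $\eps$-wise and on the sub-domains where $h$ is negligible) or a restriction of the continuity statement to invertible increments. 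To be fair, the paper's own one-line proof is silent on this direction altogether, so your write-up does not fall below the paper's standard of detail; but ``the missing case is $h=0$'' is the one sentence I would not let stand as written.
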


\begin{proof}
We only have to define $r_{\eps}(z,h):=f_{\eps}(z+h)-f_{\eps}(z)$
for all $z\in U_{\eps}$ and all $h$ sufficiently small to show the
necessary condition for the strong little-oh, because the remaining
part follows directly from Def.~\ref{def:weakLittleoh} and the relations
between strong and weak little-oh.
\end{proof}

\subsection{\protect\label{subsec:epsHol}A criterion of generalized holomorphicity}

The next problem that we want to solve is an $\eps$-wise criterion
of generalized holomorphicity: If we start from an arbitrary net $f_{\eps}:U_{\eps}\ra\CC$
of holomorphic function (e.g.~a $\rho$-moderate net coming from
regularizing singularities in a given PDE, see Sec.~\ref{sec:Examples}),
when does this net define a GHF? 
\begin{thm}
\label{thm:epsDiff}Let $U_{\eps}\subseteq\CC$ be a net of open sets,
and $z_{\eps}\in U_{\eps}$ for all $\eps$. Let $(f_{\eps})$ be
a net of functions $f_{\eps}:U_{\eps}\ra\CC$ and $q\in\R_{>0}$.
Set $z:=[z_{\eps}]\in\RCcomplexrho$, and suppose that 
\begin{enumerate}
\item \label{enu:zInt}$B_{\diff\rho^{q}}(z)\subseteq\sint{U_{\eps}}$,
\item \label{enu:f_epsHolBas}$f_{\eps}\in\bigO(\Eball_{\rho_{\eps}^{q}}(z_{\eps}))$
and $(f_{\eps})$ defines a basic function of the type $B_{\diff\rho^{q}}(z)\ra\RCcomplexrho$,
\item \label{enu:f_eps'Mod}$\forall[c_{\eps}]\in B_{\diff\rho^{q}}(z):\ \left(f'_{\eps}(c_{\eps})\right)\in\CC_{\rho}$.
\end{enumerate}
Then $f:=\left[f_{\eps}(-)\right]|_{B_{\diff\rho^{q}}(z)}\in\ghf(B_{\diff\rho^{q}}(z))$.
In particular, this also shows that ordinary holomorphic functions
on the open set $\Omega\subseteq\CC$ are embedded as GHF with
\[
f\in\mathcal{O}(\Omega)\mapsto[f(-)]_{\rho}\in\ghf(d_{f}(\Omega)),
\]
where $z\in d_{f}(\Omega)$ if and only if for some $q\in\R_{>0}$,
conditions \ref{enu:zInt}, \ref{enu:f_epsHolBas} and \ref{enu:f_eps'Mod}
hold with $U_{\eps}=\Omega$. Note that if $f\in\bigO(\Omega)$, then
$d_{f}(\Omega)\subseteq\RCcomplexrho$ is sharply open, and it always
contains finite generalized points strictly contained in $\Omega$,
i.e.:
\[
\sint{\Omega}_{\text{\emph{fin}}}:=\left\{ z\in\sint{\Omega}\mid z\text{ is finite, }d(z,\partial\Omega)\in\R_{>0}\right\} \subseteq d_{f}(\Omega).
\]
\end{thm}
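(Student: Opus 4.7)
The plan is to realize $f$ as a GHF by explicitly constructing the data of Def.~\ref{def:complexdiff} from the net $(f_\eps)$ and verifying each clause. Set $V := B_{\diff\rho^q}(z)$; by \eqref{eq:ball} and \eqref{eq:closedBallStr}, we may assume $\Eball_{\rho_\eps^q}(z_\eps) \subseteq U_\eps$ for $\eps$ small, and we restrict the defining net by replacing $U_\eps$ with $\Eball_{\rho_\eps^q}(z_\eps)$, so that $f_\eps \in \bigO(U_\eps)$ globally and $\sint{U_\eps} = V$. The natural $\eps$-wise candidates are $m_\eps(c) := f'_\eps(c)$ (moderate for each $c \in V$ by hypothesis \ref{enu:f_eps'Mod}) and
\[
r_\eps(c, h) := \begin{cases} \dfrac{f_\eps(c+h) - f_\eps(c)}{h} - f'_\eps(c), & h \ne 0, \\ 0, & h = 0, \end{cases}
\]
which, by holomorphicity, extends holomorphically in $h$ as $r_\eps(c, h) = \sum_{n=2}^{\infty} \frac{f_\eps^{(n)}(c)}{n!}\, h^{n-1}$; with these choices the $\eps$-wise linearization \eqref{eq:epswisecomplexdiff} and the residual limit \eqref{eq:limitofresidue} are immediate.

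The main technical step is to show that $(r_\eps)$ defines a basic function $r: V \times U_0 \ra \RCcomplexrho$ on a suitable sharp neighbourhood $U_0$ of $0$, i.e.~to verify $\rho$-moderation and representative independence. For $c = [c_\eps] \in V$, Lem.~\ref{lem:mayer} applied to the invertible $\diff\rho^q - |c - z| > 0$ supplies an integer $m \in \N$ with $\rho_\eps^q - |c_\eps - z_\eps| > \rho_\eps^m$ for $\eps$ small. Taking the Cauchy circle $\gamma$ of radius $a_\eps := \rho_\eps^m / 2$ around $c_\eps$ (which lies inside $\Eball_{\rho_\eps^q}(z_\eps)$ by the triangle inequality), the Cauchy integral form of the Taylor remainder yields
\[
|r_\eps(c_\eps, h_\eps)| \le \frac{2|h_\eps|}{a_\eps^2}\cdot\sup_{|w - c_\eps| = a_\eps}|f_\eps(w)|
\]
for $|h_\eps| < a_\eps/2$. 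The sup is attained at some $\bar z_\eps \in \gamma \subseteq \Eball_{\rho_\eps^q}(z_\eps)$, hence $[\bar z_\eps] \in V$, and hypothesis \ref{enu:f_epsHolBas} gives $\rho$-moderation of $|f_\eps(\bar z_\eps)|$; thus $r_\eps(c_\eps, h_\eps)$ is $\rho$-moderate after absorbing $\rho_\eps^{-2m}$ into the $\rho$-class. Representative independence of $r$ follows from Cauchy-derivative bounds on $\partial_c r_\eps, \partial_h r_\eps$ combined with the negligibility of $c_\eps - c'_\eps$ and $h_\eps - h'_\eps$.

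Once $r$ is a basic function, the $\rho$-limit Def.~\ref{def:complexdiff}.\ref{enu:limit-r} follows from the same Taylor bound: $|r(c, h)|$ scales linearly in $|h|$ while $\diff\rho^{-2m}$ is fixed, so for $|h|$ infinitesimal enough, $|r(c, h)|$ is smaller than any prescribed $\diff\rho^{q'}$. For the embedding statement: given $f \in \bigO(\Omega)$ and $z \in \sint{\Omega}_{\text{fin}}$, with $d_0 := d(z, \partial\Omega) \in \R_{>0}$ and $M \in \R_{>0}$ bounding $|z|$, we pick $q \in \R_{>0}$ with $2\diff\rho^q \le d_0$; then $B_{\diff\rho^q}(z) \subseteq \sint{\Omega}$, and hypotheses \ref{enu:f_epsHolBas} and \ref{enu:f_eps'Mod} follow from uniform boundedness of $f, f'$ on the standard compact set $K := \{w \in \Omega : d(w, \partial\Omega) \ge d_0/2,\ |w| \le M + 1\}$. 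Sharp openness of $d_f(\Omega)$ is immediate: if $z$ has witness $q$, any $w \in B_{\diff\rho^{q+1}}(z)$ has witness $q+1$ via $B_{\diff\rho^{q+1}}(w) \subseteq B_{\diff\rho^q}(z) \subseteq \sint{\Omega}$.

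The main obstacle is that the Cauchy radius $a_\eps = \rho_\eps^m / 2$ depends, via Lem.~\ref{lem:mayer}, on the $\rho$-class of $c$, so a single radius uniform in $c \in V$ is not available. The $\rho$-moderation of $r_\eps$ therefore has to be carried out at each generalized point separately, and one must check that the resulting generalized value $[r_\eps(c_\eps, h_\eps)]$ is independent of the choice of $m$; this is handled by the standard fact that Cauchy's formula for $f_\eps^{(n)}(c_\eps)$ is independent of the integration radius inside the holomorphic ball, so the Taylor coefficients $f_\eps^{(n)}(c_\eps)/n!$ driving $r_\eps$ are intrinsic.
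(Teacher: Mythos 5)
Your proposal is correct and follows essentially the same route as the paper: both verify Def.~\ref{def:complexdiff} by taking the Newton-quotient/Taylor-tail remainder $r_\eps$, establishing its moderateness and representative-independence pointwise at generalized points via Cauchy-type estimates (with Lem.~\ref{lem:mayer} supplying the $\eps$-wise radius), and then reading off the $\eps$-wise linearization and the embedding claims. The only notable variation is that the paper writes $r_{\eps}(z,h)=\int_{0}^{1}f'_{\eps}(z+ht)\,\diff t-f'_{\eps}(z)$ so that moderateness follows directly from hypothesis (iii) on $f'_{\eps}$ along the segment $[z,z+h]\subseteq V$, whereas you bound the Taylor tail by $\sup|f_{\eps}|$ on a circle via hypothesis (ii) (which forces $U_{0}$ to be taken as a suitably small infinitesimal ball, as Def.~\ref{def:complexdiff} indeed permits); both are valid.
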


\begin{proof}
We proceed by proving that $f$ is $\RCcomplexrho$-differentiable
at $z$. To show that it is also $\RCcomplexrho$-differentiable at
any other $w\in B_{\diff\rho^{q}}(z)$ it suffices to consider a suitable
ball $B_{\diff\rho^{p}}(w)\subseteq B_{\diff\rho^{q}}(z)$ and argue
similarly. By assumption, $(f_{\eps})$ defines a basic function of
the type $B_{\diff\rho^{q}}(z)\ra\RCcomplexrho$, so that we have
to focus only on the remainder functions $r_{\eps}$ that we define
for all $\eps$ as
\begin{equation}
r_{\eps}(z,h):=\int_{0}^{1}f'_{\eps}(z+ht)\,\diff t-f'_{\eps}(z)\quad\forall z\in U_{\eps}:=\Eball_{\rho_{\eps}^{q}}(z_{\eps})\,\forall h\in U_{0\eps}:=\Eball_{\rho_{\eps}^{q}}(0).\label{eq:def-r_eps_holomophicity}
\end{equation}
Recall that $f_{\eps}\in\bigO(\Eball_{\rho_{\eps}^{q}}(z_{\eps}))$
by assumption. As we did above, considering the product $h_{\eps}r_{\eps}(z_{\eps},h_{\eps})$
and integrating, we get \eqref{eq:epswisecomplexdiff} and \eqref{eq:limitofresidue}
for all $[z_{\eps}]=z$ and all $[h_{\eps}]=h\in B_{\diff\rho^{q}}(0)$.
Therefore, it remains only to prove that the net $(r_{\eps})$ defines
a basic function of the type $\{z\}\times B_{\diff\rho^{q}}(0)\ra\RCcomplexrho$.
From \ref{enu:f_eps'Mod} and \eqref{eq:def-r_eps_holomophicity},
it follows that $(r_{\eps}(z_{\eps},h_{\eps}))\in\CC_{\rho}$. We
want to prove that $r(w,h):=[r_{\eps}(w_{\eps},h_{\eps})]$ is well-defined
for all $w=[w_{\eps}]\in B_{\diff\rho^{q}}(z)$ and all $h\in B_{\diff\rho^{q}}(0)$.
Take representatives $[\bar{w}_{\eps}]=[w_{\eps}]$, $[\bar{h}_{\eps}]=[h_{\eps}]$
and let $\gamma_{\eps}$ be any counterclockwise oriented circle centered
at $w_{\eps}$ and with radius $\left[s_{\eps}\right]\in\RCrealrho_{>0}$
contained in $\Eball_{\rho_{\eps}^{q}}(z_{\eps})$. By Cauchy formula
and the integral mean value theorem we have
\begin{align}
\left|f'_{\eps}(w_{\eps})-f'_{\eps}(\bar{w}_{\eps})\right| & \le\frac{s_{\eps}^{-2}}{2\pi}\int_{0}^{2\pi}\left|f_{\eps}(w_{\eps}+e^{it})-f_{\eps}(\bar{w}_{\eps}+e^{it})\right|\,\diff t\nonumber \\
 & =s_{\eps}^{-2}\left|f_{\eps}(w_{\eps}+e^{it_{\eps}})-f_{\eps}(\bar{w}_{\eps}+e^{it_{\eps}})\right|\nonumber \\
 & \le s_{\eps}^{-2}|f'_{\eps}(c_{\eps})|\cdot|w_{\eps}-\bar{w}_{\eps}|\label{eq:lip-f'}
\end{align}
for some $t_{\eps}\in[0,2\pi]$ and some $c_{\eps}\in[w_{\eps}+e^{it_{\eps}},\bar{w}_{\eps}+e^{it_{\eps}}]$.
This proves that $(f'_{\eps}(w_{\eps}))\simeq_{\rho}(f'_{\eps}(\bar{w}_{\eps}))$
(and the Lipschitz constant in \eqref{eq:lip-f'} depends only on
$f'_{\eps}$ and not on higher derivatives, so we can use assumption
\ref{enu:f_eps'Mod}). Similarly, we can prove independence on representatives
in the first integral summand in \eqref{eq:def-r_eps_holomophicity}.
\end{proof}
\noindent Comparing the previous Thm.~\ref{thm:epsDiff} with \cite[Thm.~2]{Oberguggenberger2007},
we can note several differences:
\begin{enumerate}[label=\arabic*)]
\item \cite[Thm.~2]{Oberguggenberger2007} considers only the case of holomorphicity
at ordinary points $z_{0}\in\Omega\subseteq\CC$. On the contrary,
in Thm.~\ref{thm:epsDiff}, in general, $\rho\ne(\eps)$ and $[z_{\eps}]_{\rho}$
can also be, e.g., an infinite generalized number. 
\item In assumption \ref{enu:f_eps'Mod}, we require that only the first
derivative is moderate and not estimates of the type $\left|f_{\eps}^{(n)}(z_{0})\right|\le n!\eta^{n+1}\eps^{-Q}$
($\eps$ small) with $\eta\in\R_{>0}$ for all derivatives. On the
one hand, this is due to our effort to start from a first order differentiability
condition \eqref{eq:epswisecomplexdiff}, that allows us to prove
\ref{enu:epsDerAreModerate} and \ref{enu:derivativefunction2} of
Thm.~\ref{thm:functionofderivative}. On the other hand, as we already
showed in the proof of Thm.~\ref{thm:functionofderivative}.\ref{enu:epsDerAreModerate},
we also have an estimate of the form $|f_{\eps}^{(n)}(z_{\eps})|\le n!a_{\eps}^{-n}\sup_{z\in\gamma}|f_{\eps}(z)|\le n!(\rho_{\eps}^{-Q})^{n}\rho_{\eps}^{-R}$,
but the term $\eta=\rho_{\eps}^{-Q}$ cannot usually be replaced by
a finite number.
\item This allows us to include a larger class of GHF with respect to the
classical Colombeau theory (in particular, the Dirac delta, see Sec.~\ref{sec:Examples}
below). One could also conceptually describe the present approach
as follows: depending on a given problem, let $\left(f_{\eps}\right)$
be a $\rho$-moderate net of holomorphic functions obtained by regularizing
the singularities of a given problem; find a gauge $\rho$ that satisfies
the assumption of Thm.~\ref{thm:epsDiff}, then $\overline{f}:=\left[f_{\eps}(-)\right]_{\rho}$
is a GHF that can be studied in the present theory.
\end{enumerate}
~

In general, it could happen that $z_{\eps}$ approaches $\partial U_{\eps}$
too quickly with respect to $\rho$, so that $(z_{\eps})$ does not
yield an interior point in the $\rho$-sharp topology. This problem
is solved by changing the gauge into a smaller one $\sigma_{\eps}\le\min\left\{ \rho_{\eps},d(z_{\eps},\partial U_{\eps})\right\} $,
which would hence allow us to measure smaller but still $\sigma$-invertible
infinitesimal numbers and thus greater infinite numbers.
\begin{defn}
\label{def:relGauges}Let $\sigma$, $\rho$ be two gauges, then we
say $\sigma\le\rho$ if $\forall^{0}\eps:\ \sigma_{\eps}\leq\rho_{\eps}$.
\end{defn}

\noindent The relation $(-)\leq(-)$ is reflexive, transitive, and
antisymmetric in the sense that $\sigma\leq\rho$ and $\rho\leq\sigma$
imply $\sigma_{\eps}=\rho_{\eps}$ for $\eps$ small. Clearly, $\sigma\leq\rho$
implies the inclusion of $\rho$-moderate nets $\R_{\rho}\subseteq\R_{\sigma}$.
We now need to link $\rho$-moderate numbers in $\RCcomplex\sigma$
with those in $\RCcomplex\rho$: If $\sigma\le\rho$ and $U\subseteq\RCcomplexrho$,
we define $\frontRiseDown{U}{\sigma}{\rho}U:=\left\{ \left[x_{\eps}\right]_{\sigma}\in\RCcomplex\sigma\mid(x_{\eps})\in\mathbb{C}_{\rho}\text{ and }\left[x_{\eps}\right]_{\rho}\in U\right\} $
as the set of $\RCcomplex\sigma$ numbers which are $\rho$-moderate
(recall that $[x_{\eps}]_{\sigma}$ is the equivalent class with respect
to the relation $\sim_{\sigma}$ as in Def.~\ref{def:RCring}). We
can also well-define a natural map $\iota:[x_{\eps}]_{\sigma}\in\RCcomplexud{\sigma}{\rho}\mapsto[x_{\eps}]_{\rho}\in\RCcomplex\rho$
because $\sigma\le\rho$. This map is surjective but generally not
injective, even if $\iota(x)=\iota(y)$ implies $|x-y|\le[\rho_{\eps}]_{\sigma}^{q}$
for all $q\in\R_{\ge0}$.

For instance, if $\rho=(\eps)$ and $\sigma=\left(e^{-\frac{1}{\eps}}\right)$,
then a generalized number of the form $\left[2^{-\frac{1}{\eps}}\right]_{\rho}=0$
but $\left[2^{-\frac{1}{\eps}}\right]_{\sigma}>0$. However, if $\left[x_{\eps}\right]_{\sigma}\in\RCcomplexud{\sigma}{\rho}$
and $\left[x_{\eps}\right]_{\rho}>0$ then $\left[x_{\eps}\right]_{\sigma}>0$.

Furthermore, we denote by $\rho$ and $\sigma$ two arbitrary gauges;
only when it will be needed, we will assume a relation between them,
such as $\sigma\leq\rho$.

In the following result, we show that a representative of a $\rho$-basic
locally Lipschitz function also defines a $\sigma$-basic locally
Lipschitz function,
\begin{lem}
\label{lem:sigmabasicf}Let $U\subseteq\RCcomplexrho$. Let $(U_{\eps})$
be a net of open subsets of $\CC$ such that $U\subseteq\left\langle U_{\eps}\right\rangle $
and $(f_{\eps})$ be a net of functions $f_{\eps}:U_{\eps}\ra\CC$.
If $(f_{\eps})$ defines a $\rho$-basic locally Lipschitz function
$f:U\ra\RCcomplexrho$, then for all $\sigma\leq\rho^{Q}$, for some
$Q\in\R_{>0}$, the net $(f_{\eps})$ defines a $\sigma$-basic locally
Lipschitz map $\overline{f}:=\left[f_{\eps}(-)\right]:\frontRiseDown{U}{\sigma}{\rho}U\ra\RCcomplex\sigma$.
\end{lem}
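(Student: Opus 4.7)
The plan is to choose any $Q \in \R_{>0}$ (e.g.~$Q=1$), fix $\sigma \le \rho^Q$, and show that the same net $(f_\eps)$ defines $\overline{f}$ on $\frontRiseDown{U}{\sigma}{\rho}U$ as a $\sigma$-basic locally Lipschitz function. The central tool is Thm.~\ref{thm:Lipschitzcond} applied pointwise after translation: for each $x \in \frontRiseDown{U}{\sigma}{\rho}U$ with $(x_\eps) \in \CC_\rho$, set $\iota(x) := [x_\eps]_\rho \in U$; the $\rho$-local Lipschitz property of $f$ at $\iota(x)$, combined with Thm.~\ref{thm:Lipschitzcond} applied to the translated basic function $h \mapsto f(\iota(x)+h) - f(\iota(x))$, yields $Q_x \in \R_{>0}$ and $(L_\eps^x) \in \R_\rho$ with $L_\eps^x \le \rho_\eps^{-M_x}$ (for some $M_x \in \N$) such that $f_\eps$ is $L_\eps^x$-Lipschitz on $\Eball_{\rho_\eps^{Q_x}}(x_\eps)$ for $\eps$ small.

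To verify that $\overline{f}$ is $\sigma$-basic and well-defined, I would first note the implication $\sim_\sigma \;\Rightarrow\; \sim_\rho$ under $\sigma \le \rho^Q$: from $|z_\eps| \le \sigma_\eps^N$ for all $N$ one gets $|z_\eps| \le \rho_\eps^{QN}$ for all $N$. Consequently every $\sigma$-representative of $x$ is also a $\rho$-representative of $\iota(x) \in U \subseteq \left\langle U_\eps \right\rangle$, so eventually lies in $U_\eps$; this is precisely the condition defining $x \in \left\langle U_\eps \right\rangle$ inside $\RCcomplex{\sigma}$, verifying the first axiom of Def.~\ref{def:basicF}. For independence of the $\sigma$-representative, if $(x'_\eps) \sim_\sigma (x_\eps)$ and $N$ is chosen with $QN \ge Q_x$, then $x'_\eps \in \Eball_{\rho_\eps^{Q_x}}(x_\eps)$ for $\eps$ small, so the Euclidean Lipschitz bound gives $|f_\eps(x'_\eps) - f_\eps(x_\eps)| \le \rho_\eps^{-M_x}\sigma_\eps^N \le \sigma_\eps^{N - M_x/Q}$; letting $N$ vary shows $(f_\eps(x'_\eps)) \sim_\sigma (f_\eps(x_\eps))$, and $\sigma$-moderateness of the image is automatic from its $\rho$-moderateness.

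For the $\sigma$-local Lipschitz property at $x$, I would take $p \in \N$ with $p \ge Q_x/Q$ and set $r := \diff\sigma^p \in \RCreal{\sigma}_{>0}$. By Lem.~\ref{lem:mayer} applied in $\RCreal{\sigma}$, every $y, y' \in B_r(x) \subseteq \RCcomplex{\sigma}$ admit representatives with $|y_\eps - x_\eps|,\, |y'_\eps - x_\eps| \le \sigma_\eps^p \le \rho_\eps^{Qp} \le \rho_\eps^{Q_x}$ for $\eps$ small, so both lie in $\Eball_{\rho_\eps^{Q_x}}(x_\eps)$. The pointwise inequality $|f_\eps(y_\eps) - f_\eps(y'_\eps)| \le L_\eps^x \cdot |y_\eps - y'_\eps|$ then passes to $\sigma$-classes as $|\overline{f}(y) - \overline{f}(y')| \le [L_\eps^x]_\sigma \cdot |y - y'|$, establishing the Lipschitz estimate on $B_r(x)$ with constant $[L_\eps^x]_\sigma \in \RCreal{\sigma}_{>0}$. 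The only real obstacle is the bookkeeping between $\rho$-moderate growth such as $\rho_\eps^{-M_x}$ and $\sigma$-negligibility under $\sigma \le \rho^Q$, but the universal quantifier $\forall N$ in the definition of negligibility absorbs any fixed finite ratio, so in fact any positive $Q$ suffices.
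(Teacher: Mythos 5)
Your proof is correct and follows essentially the same route as the paper's: both transfer the $\rho$-Lipschitz data (a constant $L_{\eps}\le\rho_{\eps}^{-M}$ valid on an $\eps$-wise ball of radius bounded below by a power of $\rho_{\eps}$) to the $\sigma$-setting by the standard moderateness/negligibility bookkeeping under $\sigma\leq\rho^{Q}$, which also handles well-definedness and independence of representatives. The only difference is cosmetic: you explicitly route the $\eps$-wise Lipschitz estimate through Thm.~\ref{thm:Lipschitzcond}, whereas the paper applies $|f_{\eps}(x_{\eps})-f_{\eps}(z_{\eps})|\leq L_{\eps}|x_{\eps}-z_{\eps}|$ directly, and your version of the negligibility estimate (letting the exponent $N$ vary) is if anything written more carefully.
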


\begin{proof}
Let $[z_{\eps}]_{\sigma}=\overline{z}\in\frontRiseDown{U}{\sigma}{\rho}U$.
Then there exists $(x_{\eps})\in\CC_{\rho}$ such that $[x_{\eps}]_{\sigma}=\overline{z}=[z_{\eps}]_{\sigma}$
and $[x_{\eps}]_{\rho}\in U$. Therefore, we have $[f_{\eps}(x_{\eps})]_{\rho}\in\RCcomplexrho$
and $(f_{\eps}(x_{\eps}))\in\CC_{\rho}\subseteq\CC_{\sigma}$, hence
$[f_{\eps}(x_{\eps})]_{\sigma}\in\RCcomplex\sigma$. By \eqref{eq:lipcond},
there exist $L=\left[L_{\eps}\right]_{\rho}$, $r=\left[r_{\eps}\right]_{\rho}\in\RCrealrho_{>0}$
such that $|f(x)-f(y)|\leq L\cdot|x-y|$ whenever $x$, $y\in B_{r}(z)$.
By Lem.~\ref{lem:mayer}, there exist $N$, $\overline{q}\in\N$
such that $L_{\eps}\leq\rho_{\eps}^{-N}$ and $r_{\eps}\geq\rho_{\eps}^{\overline{q}}$,
for small $\eps$. Since $(x_{\eps})\sim_{\sigma}(z_{\eps})$ and
$\sigma\leq\rho$, then for sufficiently small $\eps$, we have 
\[
|f_{\eps}(x_{\eps})-f_{\eps}(z_{\eps})|\leq L_{\eps}|x_{\eps}-z_{\eps}|\leq\rho_{\eps}^{-N}\cdot\sigma_{\eps}^{\overline{q}}\leq\sigma_{\eps}^{q}\quad\forall q\geq\overline{q}-N.
\]
This implies $[f_{\eps}(z_{\eps})]_{\sigma}=\overline{f}(\overline{z})=[f_{\eps}(x_{\eps})]_{\sigma}$,
and hence conditions \ref{enu:basicF1} and \ref{enu:basicF2} follow.
To prove \eqref{eq:lipcond}, by setting $\overline{L}:=\left[L_{\eps}\right]_{\sigma}$
and $\overline{r}:=[r_{\eps}]_{\sigma}$, we have $\overline{L}$,
$\overline{r}\in\RCreal\sigma_{>0}$, and for every $x$, $y\in\frontRiseDown{U}{\sigma}{\rho}U\cap B_{\overline{r}}(\overline{z})$
\[
|\overline{f}(x)-\overline{f}(y)|\leq\overline{L}\cdot|x-y|.
\]
\end{proof}
A simple case of this construction is the exponential map
\[
e^{\left(-\right)}:[z_{\eps}]\in\left\{ z\in\RCcomplexrho\mid\exists c\in\RCrealrho_{>0}:|z|\leq\log c\right\} \mapsto\left[e^{z_{\eps}}\right]\in\RCcomplexrho.
\]
The domain of this map depends on the infinitesimal net $\rho$. For
instance, if $\rho=\left(\eps\right)$ then all its points are bounded
by generalized numbers of the form $\left[-N\log\eps\right]$, $N\in\N$;
whereas if $\rho=\left(e^{-\frac{1}{\eps}}\right)$, all points are
bounded by $\left[N\eps^{-1}\right]$, $N\in\N$. However, if we consider
a gauge $\sigma\leq\rho$ such that $\sigma_{\eps}:=\exp\left(-\rho_{\eps}^{\frac{1}{\eps}}\right)$,
then $\overline{e}^{\left(-\right)}:[z_{\eps}]\in\RCcomplexud{\sigma}{\rho}\mapsto\left[e^{z_{\eps}}\right]\in\RCcomplex\sigma$
is well defined.

\subsection{\protect\label{subsec:The-Cauchy-Riemann-equations}The Cauchy-Riemann
equations}

In this section, we want to prove that these GHF satisfy the Cauchy-Riemann
equations. We start from the definitions of $\RCrealrho$-differentiability
and $\RCrealrho$-partial differentiability. Even if in this series
of papers we will apply these definitions mainly to the real and imaginary
parts of a GHF (which are GSF, and hence all partial derivatives are
well-defined), these notions can be introduced independently from
the theory of GSF using the language of strong little-oh. Moreover,
notions of differentiability which are more general and independent
from GSF theory are used in the present work to extend Goursat, Looman-Menchoff
and Montel theorems to GHF.

We already pointed out, see Rem.~\ref{rem:complexdiff}.\ref{enu:sheafHol}
and Thm.~\ref{thm:functionofderivative}, the importance of using
the sheaf $\bigO(-)$ of holomorphic functions in Def.~\ref{def:complexdiff}.
Also in Lem.~\ref{lem:GC0LittleOh}, in the definition of strong
$o(1)$, it is clear that we are using the same idea, but with the
sheaf $\mathcal{C}^{0}(-)$ of continuous functions and, instead of
a first order Taylor difference $f_{\eps}(z_{\eps}+h_{\eps})-f_{\eps}(z_{\eps})-h_{\eps}\cdot m_{\eps}$,
a $0$-order Taylor difference. The following Def.~\ref{def:partDiff}
of $\RCrealrho$-(partially) differentiable maps follows the same
idea, but using the sheaf of (partially) differentiable maps $\R^{2}\ra\R^{2}$
and suitable Taylor (partial) differences.

In general, $T_{N}(u,z,h,\vec{v},(m_{j})_{j})$ denotes a (possibly
partial) Taylor difference (i.e.~the function $u$ evaluated at a
translated argument minus the corresponding Taylor polynomial) of
order $N\in\N$ of the function $u$ at the point $z$, with increment
$h$ in the direction $\vec{v}$ and coefficients $(m_{j})_{j=1}^{N}$,
and we have:
\begin{defn}
\label{def:sheafTaylor}Let $\mathbb{K}\in\{\R,\mathbb{C}\}$, $U\subseteq\RCKrho^{n}$,
$V\subseteq\RCKrho^{d}$, $T_{N}$ be a Taylor difference of order
$N$. Let $S(X)\subseteq(\R^{d})^{X}$ be a family of maps for all
open subsets $X\subseteq\field{K}^{n}$ (e.g.~a sheaf). Then, we
write $u\in\frontRise{\mathcal{G}}{\rho}\mathcal{G}(U,V;S(-),T_{N})$
if $u:U\ra V$ is a basic function and there exist $U_{0}\subseteq\RCKrho^{n}$
and nets $u_{\eps}\in S(U_{\eps})$, $r_{\eps}:U_{\eps}\times U_{0\eps}\ra\field{K}^{d}$
such that:
\begin{enumerate}
\item \label{enu:neighborhood-1}$U_{0}$ is a sharp neighborhood of $0$,
\item \label{enu:f_eps-1}$(u_{\eps})$ defines $u$,
\item \label{enu:r_eps-1}$(r_{\eps})$ defines a basic function $r:U\times U_{0}\ra\RCKrho^{d}$.
\end{enumerate}
For all $z\in U$, we have:
\begin{enumerate}[resume]
\item \label{enu:limit-r-1}$\hyplimf{\rho}{h}r(z,h)=0$ and there exists
$[m_{1\eps}]=m_{1},\ldots,[m_{N\eps}]=m_{N}\in\RCKrho^{d}$ such that
for all representatives $[z_{\eps}]=z\in U$ and all $[h_{\eps}]\in U_{0}$,
we have
\begin{align}
\forall^{0}\eps:\ T_{N}(u_{\eps},z_{\eps},h_{\eps},\vec{v}_{\eps},(m_{j\eps})_{j}) & =h_{\eps}^{N}\cdot r_{\eps}(z_{\eps},h_{\eps}).\label{eq:epswisecomplexdiff-1}\\
r_{\eps}(z_{\eps},0) & =0=\lim_{h\rightarrow0}{r_{\eps}(z_{\eps},h)}.\label{eq:limitofresidue-1}
\end{align}
\item \label{enu:highestDer}For all $\alpha\in\N^{n}$ with $|\alpha|=N$,
the directional derivative $[z_{\eps}]\in U\mapsto\left[\frac{\partial^{\alpha}u_{\eps}}{\partial\vec{v}_{\eps}^{N}}(z_{\eps})\right]\in\RCKrho^{d}$
is well-defined and sharply continuous.
\end{enumerate}
If conditions \ref{enu:neighborhood-1} - \ref{enu:limit-r-1} hold,
we simply write
\[
T_{N}(u,z,h,\vec{v},(m_{j})_{j})=o(h^{N})\text{ as }h\to0\text{ in }z\in U,
\]

\noindent and we say that $u$ is $T_{N}$\emph{-regular with representatives
}$u_{\eps}\in S(U_{\eps})$ (\textit{and with respect to} \textit{$\RCKrho^{n}$}
in case we have to underscore the dependence from $\RCKrho^{n}$).
Usually, specifying the family $S(X)$ it is already implicitly clear
what is the Taylor difference $T_{N}$ we are considering. As already
proved in Thm.~\ref{thm:functionofderivative}, in case of GHF, the
last condition \ref{enu:highestDer} can be proved from the remaining
ones.
\end{defn}

\noindent We have the following examples:
\begin{defn}
\label{def:partDiff}Let $V\subseteq U\subseteq\RCrealrho^{2}$ be
a sharply open set, $f:U\ra\RCrealrho^{2}$ be a basic function. Then,
\begin{enumerate}
\item A function $u$ is said to be $\RCrealrho$-\emph{differentiable on}
$V$ if for all $(x,y)\in V$ there exist a linear $\rho$-basic function
$D_{(x,y)}:\RCrealrho^{2}\ra\RCrealrho^{2}$ satisfying the condition
$u(x+h_{1},y+h_{2})-u(x,y)-D_{(x,y)}(h)=o(h)$ as $h=(h_{1},h_{2})\rightarrow0$
and with representatives in:
\[
\text{Diff}(U):=\left\{ u:\bar{U}\ra\R^{d}\mid u\text{ is differentiable on }\bar{U},\;U\subseteq\left\langle \bar{U}\right\rangle \right\} .
\]
This unique $D_{(x,y)}$ (see Rem.~\ref{rem:diff}.\ref{enu:uniqueDiff}
below) is called the differential of $f$ at $(x,y)$ and is denoted
by $f'(x,y)$ or $\diff f(x,y)$.
\item A function $f$ is said to be $\RCrealrho$-\emph{partially differentiable}
with respect to $x$ (resp.~$y$) on $V$ if for all $(x,y)\in V$
there exist $m_{x}$ (resp.~$m_{y}$)$\in\RCrealrho$ such that $f(x+h,y)=f(x,y)+h\cdot m_{x}+o(h)$
as $h\rightarrow0$ (resp. $f(x+h,y)=f(x+y)+h\cdot m_{x}+o(h)$ as
$h\rightarrow0$) and with representatives in:
\[
\text{PDiff}(U):=\left\{ u:\bar{U}\ra\R^{d}\mid\forall j=1,\ldots,n:\ \exists\partial_{j}u\text{ on }\bar{U},\;U\subseteq\left\langle \bar{U}\right\rangle \right\} .
\]
We denote the $\RCrealrho$-partial derivative of $f$ with respect
to $x$ (resp.~$y$) as $\frac{\partial f}{\partial x}(x,y):=\partial_{1}f(x,y):=\partial_{x}f(x,y):=m_{x}$
(or $\frac{\partial f}{\partial y}(x,y):=\partial_{2}f(x,y):=\partial_{2}f(x,y):=m_{y}$).
\end{enumerate}
\end{defn}

\begin{rem}
~\label{rem:diff}
\begin{enumerate}[label=\alph*)]
\item Linear $\rho$-basic functions $D:\RCrealrho^{2}\ra\RCrealrho^{2}$
are precisely the $\rho$-basic functions that can be written as $D(z)=Az$,
for all $z\in\RCrealrho^{2}$, where $A\in M_{2\times2}(\RCrealrho)$
is a matrix with entries in the ring $\RCrealrho$. That is, each
linear $\rho$-basic function $D$ is a multiplication by a unique
$A\in M_{2\times2}(\RCrealrho)$ and conversely.
\item \label{enu:uniqueDiff}The only weak $\bar{o}(h)$-linear function
$D$ on $\RCrealrho^{2}$, i.e.~such that it can be written as $D(h)=h\cdot r(h)$
with $r(h)\to0$, is the zero function. In fact, assume that $D(h_{0})\ne0$
for some $h_{0}\in\RCrealrho^{2}$. From \eqref{eq:orderedFieldSub},
there exists $L\subzero I$ and $r\in\N$ such that $|D(h_{0})|>_{L}0$
is invertible. We cannot have $h_{0}=_{L}0$ because, otherwise, $D(h_{0})=_{L}0$
by linearity. Once again from \eqref{eq:orderedFieldSub} (applied
to the ring $\RCrealrho|_{L})$ there exists $K\subzero L$ such that
both $|h_{0}|>_{K}0$ and $|D(h_{0})|>_{K}0$ are invertible. For
all invertible $\alpha\in\RCrealrho|_{K}$, we have
\[
D(\alpha h_{0})=_{K}\alpha D(h_{0})=_{K}\alpha h_{0}r(\alpha h_{0}),
\]
and we get the contradiction $|D(h_{0})|/|h_{0}|=_{K}|r(\alpha h_{0})|\to0$
as $\alpha\to0$ in $\RCrealrho|_{K}$.
\end{enumerate}
\end{rem}

In the following, we prove the equivalent condition for differentiability
on partial derivatives. Thanks to our language of little-ohs, the
proof is formally identical to the classical one, and we hence omit
it.
\begin{thm}
\label{thm:eqconditionofdifferentiability}Let $V\subseteq\RCrealrho^{2}$
be a sharply open set and $z=(z_{1},z_{2})\in V$
\begin{enumerate}
\item If $f:=(f_{1},f_{2}):V\ra\RCrealrho^{2}$ be a $\RCrealrho$-differentiable
at $z$, then for $i$,$j\in\left\{ 1,2\right\} $, there exists the
$\RCrealrho$-partial derivative $\partial_{j}f_{i}$ and is sharply
continuous. Furthermore,
\[
f'(z)=\left[\begin{array}{cc}
\partial_{1}f_{1}(z) & \partial_{2}f_{1}(z)\\
\partial_{1}f_{2}(z) & \partial_{2}f_{2}(z)
\end{array}\right].
\]
\item If $f:=(f_{1},f_{2}):V\ra\RCrealrho^{2}$ be a basic function. Suppose
that for each $i$, $j\in\{1,2\}$, the $\RCrealrho$-partial derivative
$\partial_{j}f_{i}$ exists on some neighbourhood of $z$ and the
$\RCrealrho$-partial derivative $\partial_{j}f_{i}$ is sharply continuous
at $z$. Then $f$ is $\RCrealrho$-differentiable at $z$.
\end{enumerate}
\end{thm}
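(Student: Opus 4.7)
Both implications would follow the standard real-variable proofs, translated into the non-Archimedean little-oh calculus of Sec.~\ref{sec:Generalized-Holomorphic-Function}. The translation is essentially formal once two ingredients are in place: the characterisation of linear $\rho$-basic maps as matrix multiplications (Rem.~\ref{rem:diff}) and the arithmetic of the strong little-oh (Lem.~\ref{lem:littleoh}).

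For (1), I would write $D_z$ as multiplication by a matrix $A = (a_{ij}) \in M_{2\times 2}(\RCrealrho)$ (Rem.~\ref{rem:diff}). Substituting $h = (h_1, 0)$ into $f(z+h) - f(z) - D_z(h) = o(h)$ and reading off components gives $f_i(x+h_1, y) - f_i(x, y) - a_{i1} h_1 = o(h_1)$, so that $a_{i1} = \partial_1 f_i(z)$ in the sense of Def.~\ref{def:partDiff}; the restriction to the slice $h_2 = 0$ preserves both the basic-function character of the residual and its sharp-topology limit as $h_1 \to 0$, which yields a legitimate strong $o(h_1)$ in the one-dimensional sense. The symmetric substitution $h = (0, h_2)$ identifies $a_{i2} = \partial_2 f_i(z)$. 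The sharp continuity of each $\partial_j f_i$ is already part of the $T_1$-regularity hypothesis via condition \ref{enu:highestDer} of Def.~\ref{def:sheafTaylor} applied with $N = 1$ and representatives in $\textrm{Diff}(U)$.

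For (2), I would take $D_z(h) := \partial_1 f(z) h_1 + \partial_2 f(z) h_2$ as candidate differential, a $\rho$-basic linear map by Rem.~\ref{rem:diff}, and define the residual algebraically by
\[
r_\eps(z_\eps, h_\eps) := f_\eps(z_\eps + h_\eps) - f_\eps(z_\eps) - \partial_1 f_\eps(z_\eps)\, h_{1,\eps} - \partial_2 f_\eps(z_\eps)\, h_{2,\eps},
\]
which is manifestly a basic function of $(z, h)$ on a suitable $V \times U_0$ thanks to the hypothesis that $f$ and each $\partial_j f$ are basic. To show $r_\eps = o(|h_\eps|)$, I would telescope $f_\eps(x_\eps + h_{1,\eps},\, y_\eps + h_{2,\eps}) - f_\eps(x_\eps, y_\eps)$ through the intermediate point $(x_\eps,\, y_\eps + h_{2,\eps})$ and apply the classical one-variable mean value theorem (legitimate since $f_\eps \in \textrm{PDiff}(U_\eps)$) to each resulting bracket, obtaining intermediate points $\xi_\eps$, $\eta_\eps$ used only to \emph{estimate} $r_\eps$. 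The sharp continuity of $\partial_j f$ at $z$ combined with \eqref{eq:contF} then forces the deviations $\partial_j f_\eps(\textrm{intermediate}) - \partial_j f_\eps(z_\eps)$ to be $\rho$-negligible as $h \to 0$ in the sharp topology, which packages into both the $\eps$-wise identity \eqref{eq:epswisecomplexdiff-1} and the residual limit conditions of Def.~\ref{def:sheafTaylor}.

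The principal obstacle is the bookkeeping around the intermediate points: being supplied by the classical mean value theorem, they are non-canonical and depend on chosen representatives, so they cannot be used to \emph{define} a basic function. Defining $r_\eps$ algebraically as above and invoking the mean value theorem only for the a posteriori estimate sidesteps this issue entirely. A subsidiary subtlety is that sharp continuity of $\partial_j f$, being a pointwise statement at $z$, must be transported along the line segments connecting $z_\eps$ to the intermediate points; this follows routinely from \eqref{eq:contF} once $|h|$ is smaller than the relevant continuity modulus, and Lem.~\ref{lem:littleoh} then combines the two telescoping contributions into a single strong little-oh, closing the argument.
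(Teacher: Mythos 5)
Your proposal is correct and follows exactly the route the paper intends: the paper omits this proof with the remark that, thanks to the little-oh language, it is formally identical to the classical one, and your argument is precisely that classical proof (slice substitutions $h=(h_1,0)$, $h=(0,h_2)$ for part (1); telescoping through an intermediate point plus the one-variable mean value theorem for part (2)) carried out with the appropriate non-Archimedean bookkeeping. Your observation that the residual $r_\eps$ must be defined algebraically, with the mean-value intermediate points used only for a posteriori estimates so as to remain within the class of basic functions, is exactly the care the translation requires.
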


We can finally prove the CRE, and the related Goursat-like theorems.
The proof of the CRE is again formally identical to the classical
one. For Goursat theorem, we need to take representatives in
\[
\text{CRE}(U):=\left\{ u\in\text{PDiff}(U)\mid\text{\ensuremath{\partial_{j}u_{k}}\text{ satisfy the CRE}, }j,k=1,2\right\} .
\]

\begin{thm}[Cauchy-Riemann equations and Goursat theorem]
\label{thm:CRE}Consider a sharply open set $U\subseteq\RCrealrho^{2}$,
set $\overline{U}:=\left\{ z\in\RCcomplexrho:\left(\realpart(z),\impart(z)\right)\in U\right\} $
and take $(x_{0},y_{0})\in U$. Let $u$, $v:U\ra\RCrealrho^{2}$
be maps, and for all $z=x+iy\in\overline{U}$ set $f(z):=u(x,y)+iv(x,y)$.
Then, we have:
\begin{enumerate}
\item \label{enu:CRE}If $f$ is $\RCcomplexrho$-differentiable at $z_{0}:=x_{0}+iy_{0}$,
then $u$, $v$ are sharply continuously $\RCrealrho$-differentiable
at $(x_{0},y_{0})$ and satisfy the CRE:
\begin{equation}
\partial_{1}u=\partial_{2}v\quad\text{and}\quad\partial_{2}u=-\partial_{1}v.\label{eq:cauchyriemann}
\end{equation}
\item \label{enu:Goursat}Let $u$, $v$ be $\RCrealrho$-partially differentiable
functions on $U$ with representatives in
\begin{equation}
\left\{ u\in\text{\emph{CRE}}(U_{\eps})\cap\mathcal{C}^{0}(U_{\eps})\mid\text{\ensuremath{\partial_{j}u_{k}\in\mathcal{C}^{0}(U_{\eps})}, }j,k=1,2\right\} .\label{eq:reprGoursat}
\end{equation}
Assume that the generalized partial derivatives of $u$, $v$ satisfy
the CRE \eqref{eq:cauchyriemann}, then $f\in\ghf(U)$.
\item \label{enu:strongGoursat}Let $u$, $v$ be $\RCrealrho$-partially
differentiable functions on $U\subseteq\RCrealrho^{2}$ with representatives
in $\text{\emph{CRE}}(U_{\eps})\cap\text{\emph{Diff}}(U_{\eps})$.
Assume that the generalized partial derivatives of $u$, $v$ satisfy
the CRE \eqref{eq:cauchyriemann}, then $f\in\ghf(U)$.
\end{enumerate}
\end{thm}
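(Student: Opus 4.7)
The plan for part \ref{enu:CRE} is to carry through the classical CRE derivation inside the sharp little-oh calculus of Def.~\ref{def:complexdiff}. Write $m = m_1 + i m_2$ with $m_j \in \RCrealrho$, then specialize the relation $f(z_0 + h) = f(z_0) + h m + o(h)$ first to a real increment $h \in \RCrealrho$ and then to $h = i k$ with $k \in \RCrealrho$. Separating real and imaginary parts of the two resulting identities should immediately produce both the existence of the four $\RCrealrho$-partial derivatives of $u$, $v$ at $(x_0, y_0)$ and the equations $\partial_1 u = m_1 = \partial_2 v$, $\partial_1 v = m_2 = -\partial_2 u$, i.e.~exactly \eqref{eq:cauchyriemann}. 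Sharp continuous $\RCrealrho$-differentiability in a neighborhood will then come for free from Thm~\ref{thm:functionofderivative}.\ref{enu:GHF-GSF}, which identifies $u$ and $v$ with GSF (hence smooth and locally Lipschitz).

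For parts \ref{enu:Goursat} and \ref{enu:strongGoursat}, my first step is to promote the hypothesis at the level of representatives to ordinary $\eps$-wise holomorphicity. Setting $f_\eps(x+iy) := u_\eps(x,y) + i v_\eps(x,y)$ on $U_\eps \subseteq \CC$, the classical elementary fact that real differentiability of $u_\eps$, $v_\eps$ combined with CRE yields at every $w \in U_\eps$ the expansion $f_\eps(w + k) = f_\eps(w) + (\partial_x u_\eps + i\partial_x v_\eps)(w)\cdot k + o(|k|)$ gives $f_\eps \in \bigO(U_\eps)$. In case \ref{enu:strongGoursat} real differentiability is hypothesized directly; case \ref{enu:Goursat} reduces to \ref{enu:strongGoursat} because continuity of the partials $\partial_j u_k$ automatically forces real differentiability of $u_\eps$, $v_\eps$.

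With $f_\eps$ holomorphic $\eps$-wise, the remaining task is to verify the conditions of Def.~\ref{def:complexdiff} for $f = u + iv$ on the sharply open set $\overline{U}$. For every $c \in \overline{U}$, the $\RCrealrho$-partial differentiability of $u$, $v$ delivers $f'(c) := \partial_1 u(\realpart c, \impart c) + i\partial_1 v(\realpart c, \impart c) \in \RCcomplexrho$, whose representatives $f'_\eps(c_\eps)$ are $\rho$-moderate simply by belonging to $\RCrealrho$. This is precisely hypothesis \ref{enu:f_eps'Mod} of Thm~\ref{thm:epsDiff}, so that theorem applies around any $z \in \overline{U}$ and yields local GHF. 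The patching into the global assertion $f \in \ghf(\overline{U})$ is then modelled on the proof of Thm~\ref{thm:functionofderivative}.\ref{enu:derivativefunction2}: take the global sharp neighborhood $U_0 := \bigcup\{B_a(0) \mid \exists b > a\,\exists z \in \overline{U}:\overline{B_b(z)} \subseteq \overline{U}\}$ and the global remainder $r_\eps(z, h) := \int_0^1 f'_\eps(z + ht)\,\diff t - f'_\eps(z)$; the $\eps$-wise linearization is direct integration, and the sharp $\rho$-limit in Def.~\ref{def:complexdiff}.\ref{enu:limit-r} will follow from Cor.~\ref{cor:epslimit} combined with the Lipschitz character of $r_\eps(z,\cdot)$.

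The main obstacle I expect is verifying that this $r_\eps$ really defines a \emph{basic} function $\overline{U} \times U_0 \to \RCcomplexrho$, i.e.~that $[r_\eps(w_\eps, h_\eps)]$ is independent of the chosen representatives. The cleanest route is the Cauchy-estimate argument at the end of the proof of Thm~\ref{thm:epsDiff}: using a bound of the form $|f'_\eps(w_\eps) - f'_\eps(\bar{w}_\eps)| \le s_\eps^{-2}\,\sup_{|z - w_\eps| = s_\eps}|f_\eps(z)|\cdot|w_\eps - \bar{w}_\eps|$ with a moderate auxiliary radius $[s_\eps]$, one converts the already-established $\rho$-moderateness of $f'_\eps$ into $\rho$-negligibility of $f'_\eps(w_\eps) - f'_\eps(\bar{w}_\eps)$. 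Beyond this bookkeeping, no genuinely new conceptual difficulty should arise.
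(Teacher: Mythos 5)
Your overall strategy is sound and the two halves of the argument coincide with the paper's on the decisive points: part \ref{enu:CRE} is exactly the classical computation with $h$ real and $h$ purely imaginary, and in parts \ref{enu:Goursat}--\ref{enu:strongGoursat} the paper likewise first upgrades the representatives to $\eps$-wise holomorphy via the classical Goursat-type theorems (it cites \cite{GrMo}, Thms.~1 and 2, which is precisely your ``continuous partials force differentiability'' reduction followed by ``differentiability plus CRE gives complex differentiability''). Where you diverge is in the second step of \ref{enu:Goursat}--\ref{enu:strongGoursat}: you route through Thm.~\ref{thm:epsDiff} plus a patching argument modelled on Thm.~\ref{thm:functionofderivative}.\ref{enu:derivativefunction2}, and you correctly identify that the cost of this route is re-verifying that the remainder $r_\eps(z,h)=\int_0^1 f'_\eps(z+ht)\,\diff t-f'_\eps(z)$ is a basic function. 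The paper avoids this bookkeeping entirely: it verifies the strong little-oh condition \eqref{eq:generalizedComplexDiff1} \emph{intrinsically}, by combining the little-oh expansions of $u$ and $v$ (available from $\RCrealrho$-differentiability via Thm.~\ref{thm:eqconditionofdifferentiability}) with the generalized CRE and Lem.~\ref{lem:littleoh}, collecting terms into $f(z_0)+\left[\partial_1u-i\partial_2u\right]h+o(h)$. Both routes are legitimate; the paper's is shorter because the remainder of $f$ is assembled from the already-certified remainders of $u$ and $v$, while yours buys a reusable template (criterion plus globalization) at the price of the Cauchy-estimate verification you describe.

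One citation in part \ref{enu:CRE} should be repaired: you invoke Thm.~\ref{thm:functionofderivative}.\ref{enu:GHF-GSF} to get sharp continuity of the partial derivatives, but that theorem is stated for $f$ a \emph{GHF}, i.e.\ with holomorphic defining nets $f_\eps\in\bigO(U_\eps)$, whereas the hypothesis of part \ref{enu:CRE} is only $\RCcomplexrho$-differentiability at the single point $z_0$, with no holomorphy assumption on the representatives. The correct tool is Thm.~\ref{thm:eqconditionofdifferentiability}.(i): since multiplication by $m\in\RCcomplexrho$ is in particular an $\RCrealrho$-linear map of $\RCrealrho^2$, the $\RCcomplexrho$-differentiability of $f$ at $z_0$ gives $\RCrealrho$-differentiability of $(u,v)$ at $(x_0,y_0)$, and that theorem then yields existence and sharp continuity of the four partials. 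This is how the paper argues, and it is the only place where your proposal leans on a statement whose hypotheses are not met.
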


\begin{proof}
\ref{enu:CRE}: Assume that $f$ is $\RCcomplexrho$-differentiable
at $z_{0}$. By Thm.~\ref{thm:eqconditionofdifferentiability}, the
partial derivatives $\partial_{j}u$ and $\partial_{j}v$ ($j=1$,
$2$) exist and are sharply continuous. Since $f$ is $\RCcomplexrho$-differentiable
at $z_{0}$, then there exists $f'(z_{0})\in\RCcomplexrho$ satisfying
\[
f(z_{0}+h)=f(z_{0})+m\cdot h+o(h)\quad\text{as}\quad h\rightarrow0.
\]
Taking $h=h_{1}\in\RCrealrho$, we have
\[
\begin{aligned}f(z_{0}+h_{1})-f(z_{0}) & =u(x_{0}+h_{1},y_{0})+iv(x_{0}+h_{1},y_{0})-u(x_{0},y_{0})-iv(x_{0},y_{0})\\
 & =u(x_{0}+h_{1},y_{0})-u(x_{0},y_{0})+i\left[v(x_{0}+h_{1},y_{0})-v(x_{0},y_{0})\right]\\
 & =\partial_{1}u(x_{0},y_{0})h_{1}+o(h_{1})+i\left[v(x_{0},y_{0})h_{1}+o(h_{1})\right]\\
 & =\left[\partial_{1}u(x_{0},y_{0})+i\partial_{1}v(x_{0},y_{0})\right]h_{1}+o(h_{1})\quad\text{as}\quad h_{1}\rightarrow0.
\end{aligned}
\]
Since the derivative is unique, then 
\begin{equation}
f'(z_{0})=\partial_{1}u(x_{0},y_{0})+i\partial_{1}v(x_{0},y_{0}).\label{eq:CRE1}
\end{equation}
Taking $h=ih_{2}$, $h_{2}\in\RCrealrho$ we have
\[
\begin{aligned}f(z_{0}+ih_{2})-f(z_{0}) & =u(x_{0},y_{0}+h_{2})+iv(x_{0},y_{0}+h_{2})-u(x_{0},y_{0})-iv(x_{0},y_{0})\\
 & =u(x_{0},y_{0}+h_{2})-u(x_{0},y_{0})+i\left[v(x_{0},y_{0}+h_{2})-v(x_{0},y_{0})\right]\\
 & =\partial_{2}u(x_{0},y_{0})h_{2}+o(h_{2})+i\left[v(x_{0},y_{0})h_{2}+o(h_{2})\right]\\
 & =\left[\partial_{2}u(x_{0},y_{0})+i\partial_{2}v(x_{0},y_{0})\right]h_{2}+o(h_{2})\quad\text{as}\quad h_{2}\rightarrow0.
\end{aligned}
\]
On the other hand, we have
\[
\begin{aligned}f(z_{0}+ih_{2})-f(z_{0}) & =f'(z_{0})ih_{2}+o(h_{2})\quad\text{as}\quad h_{2}\rightarrow0\end{aligned}
.
\]
Since the derivative is unique, then 
\begin{equation}
f'(z_{0})=\frac{1}{i}\left(\partial_{2}u(x_{0},y_{0})+i\partial_{2}v(x_{0},y_{0})\right).\label{eq:CRE2}
\end{equation}
By \eqref{eq:CRE1} and \eqref{eq:CRE2}, we hence have 
\[
\partial_{1}u(x_{0},y_{0})+i\partial_{1}v(x_{0},y_{0})=\partial_{2}v(x_{0},y_{0})-i\partial_{2}u(x_{0},y_{0})
\]
and from which the conclusion follows.

\noindent\ref{enu:Goursat}: In order to prove that $f\in\ghf(U)$,
we have to show that $f_{\eps}\in\bigO(U_{\eps})$ (and here we clearly
have to use the classical Goursat theorem, see e.g.~\cite[Thm.~1]{GrMo}),
and the strong little-oh differentiability condition \eqref{eq:generalizedComplexDiff1}.

To prove that $f_{\eps}\in\bigO(U_{\eps})$, take representatives
$u=[u_{\eps}(-)]$ and $v=[v_{\eps}(-)]$ in \eqref{eq:reprGoursat}.
Then, $\partial_{j}u_{\epsilon}$ and $\partial_{j}v_{\eps}$ exist
and are continuous in $U_{\eps}$, they satisfy there the CRE, and
$f_{\eps}:=u_{\eps}+iv_{\eps}$ is continuous because $u_{\eps}$,
$v_{\eps}\in\mathcal{C}^{0}(U_{\eps})$. Therefore, for the classical
Goursat theorem \cite[Thm.~1]{GrMo}, we have the conclusion.

To show \eqref{eq:generalizedComplexDiff1}, by Thm.~\ref{thm:eqconditionofdifferentiability},
we have that the $\RCrealrho$-partial derivatives $\partial_{i}u$
and $\partial_{i}v$ exist at $(x_{0},y_{0})$, $i=1,2$. Hence, for
all $h=h_{1}+ih_{2}\in\RCcomplexrho$ sufficiently small, by applying
Lemma \ref{lem:littleoh} and the CRE, we obtain

\begin{eqnarray}
f(z_{0}+h) & = & u(x_{0}+h_{1},y_{0}+h_{2})+iv(x_{0}+h_{1},y_{0}+h_{2})\nonumber \\
 & = & \left[u(x_{0},y_{0})+\partial_{1}u(x_{0},y_{0})h_{1}+\partial_{2}u(x_{0},y_{0})h_{2}+o(h)\right]\nonumber \\
 &  & +i\left[v(x_{0},y_{0})+\partial_{1}v(x_{0},y_{0})h_{1}+\partial_{2}v(x_{0},y_{0})h_{2}+o(h)\right]\nonumber \\
 & = & \left[u(x_{0},y_{0})+iv(x_{0},y_{0})\right]+\left[\partial_{1}u(x_{0},y_{0})+i\partial_{1}v(x_{0},y_{0})\right]h_{1}\nonumber \\
 &  & +\left[\partial_{2}u(x_{0},y_{0})+i\partial_{2}v(x_{0},y_{0})\right]h_{2}+o(h)\nonumber \\
 & = & \left[u(x_{0},y_{0})+iv(x_{0},y_{0})\right]+\left[\partial_{1}u(x_{0},y_{0})-i\partial_{2}u(x_{0},y_{0})\right]h_{1}\nonumber \\
 &  & +\left[\partial_{1}u(x_{0},y_{0})-i\partial_{2}u(x_{0},y_{0})\right]ih_{2}+o(h)\nonumber \\
 & = & f(z_{0})+\left[\partial_{1}u(x_{0},y_{0})-i\partial_{2}u(x_{0},y_{0})\right]h+o(h)\text{ as }h\rightarrow0.\label{eq:CRE-diff}
\end{eqnarray}

\noindent\ref{enu:strongGoursat}: We can proceed as above using
\cite[Thm.~2]{GrMo}.
\end{proof}
Proceeding like in these extensions of Goursat theorem and using the
classical Looman-Menchof theorem \cite[Thm.~3]{GrMo}, we obtain the
following
\begin{thm}[Looman-Menchof]
In the general assumptions of Thm.~\ref{thm:CRE}, let $u$, $v$
be $\RCrealrho$-partially differentiable functions on $U$ with representatives
in $\text{\emph{CRE}}(U_{\eps})\cap\mathcal{C}^{0}(U_{\eps})$, then
$f\in\ghf(U)$.
\end{thm}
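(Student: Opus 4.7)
The plan is to follow the same two-step template as in the proof of Thm.~\ref{thm:CRE}.\ref{enu:Goursat}--\ref{enu:strongGoursat}, replacing the classical Goursat theorem by the classical Looman--Menchoff theorem \cite[Thm.~3]{GrMo}. Fix representatives $u=[u_{\eps}(-)]$ and $v=[v_{\eps}(-)]$ with $u_{\eps},v_{\eps}\in\text{CRE}(U_{\eps})\cap\mathcal{C}^{0}(U_{\eps})$: for each $\eps$ the function $f_{\eps}:=u_{\eps}+iv_{\eps}$ is continuous on $U_{\eps}$, its real and imaginary parts have pointwise partial derivatives everywhere on $U_{\eps}$, and these partial derivatives satisfy the CRE on $U_{\eps}$. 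The classical Looman--Menchoff theorem therefore yields $f_{\eps}\in\bigO(U_{\eps})$ for every $\eps$.

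The second step is to promote this $\eps$-wise holomorphicity to $f\in\ghf(U)$ via the criterion of Thm.~\ref{thm:epsDiff}. Let $z=[z_{\eps}]\in U$. Because $u$ is a basic function on $U$, we have $U\subseteq\sint{U_{\eps}}$, so there exists $q\in\R_{>0}$ with $B_{\diff\rho^{q}}(z)\subseteq\sint{U_{\eps}}$ and consequently $f_{\eps}\in\bigO(\Eball_{\rho_{\eps}^{q}}(z_{\eps}))$ for $\eps$ small by \eqref{eq:closedBallStr}; moreover $(f_{\eps})$ defines a basic function on $B_{\diff\rho^{q}}(z)$ directly from $f=u+iv$, so hypotheses \ref{enu:zInt} and \ref{enu:f_epsHolBas} of Thm.~\ref{thm:epsDiff} hold. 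It remains to verify the moderation condition \ref{enu:f_eps'Mod}, i.e.~that $(f'_{\eps}(c_{\eps}))\in\CC_{\rho}$ for every $c=[c_{\eps}]\in B_{\diff\rho^{q}}(z)$.

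Fix such a $c$. The $\RCrealrho$-partial differentiability of $u$ furnishes some $\partial_{1}u(c)=[m_{\eps}]\in\RCrealrho$ together with a strong little-oh expansion; passing to its $\eps$-wise content (in the spirit of Rem.~\ref{rem:complexdiff}.\ref{enu:epsdiff} and of the proof of Thm.~\ref{thm:functionofderivative}.\ref{enu:f_eps_locHol}) and using that $u_{\eps}\in\text{PDiff}(U_{\eps})$ guarantees the pointwise existence of $\partial_{1}u_{\eps}(c_{\eps})$, one identifies $m_{\eps}=\partial_{1}u_{\eps}(c_{\eps})$ for $\eps$ small. Hence $(\partial_{1}u_{\eps}(c_{\eps}))\in\R_{\rho}$, and similarly $(\partial_{1}v_{\eps}(c_{\eps}))\in\R_{\rho}$. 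Since each $f_{\eps}$ is now holomorphic at $c_{\eps}$, the classical identity $f'_{\eps}(c_{\eps})=\partial_{1}u_{\eps}(c_{\eps})+i\partial_{1}v_{\eps}(c_{\eps})$ combined with the previous moderation yields $(f'_{\eps}(c_{\eps}))\in\CC_{\rho}$. Thm.~\ref{thm:epsDiff} then gives $f\in\ghf(B_{\diff\rho^{q}}(z))$, and since $z\in U$ was arbitrary and the GHF property is local, $f\in\ghf(U)$.

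The principal obstacle is this final $\eps$-wise identification: the hypothesis of $\RCrealrho$-partial differentiability is an asymptotic statement in $\RCrealrho$ and does not single out, a priori, the pointwise partial derivative of $u_{\eps}$ as its natural representative. The bridge is provided by the class $\text{PDiff}(U_{\eps})$, which ensures that classical pointwise partial derivatives exist, together with the strong little-oh definition, which enforces an $\eps$-wise Taylor relation pinning down the representative up to $\rho$-negligible nets; without this bootstrap one could not feed Thm.~\ref{thm:epsDiff} and classical Looman--Menchoff into each other to conclude.
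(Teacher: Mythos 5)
Your proof is correct, and the first half coincides with the paper's: both apply the classical Looman--Menchoff theorem to the representatives $f_{\eps}=u_{\eps}+iv_{\eps}$ (continuous, everywhere partially differentiable, satisfying the CRE $\eps$-wise) to get $f_{\eps}\in\bigO(U_{\eps})$. Where you diverge is in the second half. The paper's (very terse) proof says to ``proceed like in the extensions of Goursat theorem'', i.e.~to verify the strong little-oh condition \eqref{eq:generalizedComplexDiff1} directly by the computation \eqref{eq:CRE-diff}, which expands $u$ and $v$ via their generalized differentials and then uses the \emph{generalized} CRE to collapse the expansion into $f(z_{0})+\left[\partial_{1}u-i\partial_{2}u\right]h+o(h)$. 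You instead feed the $\eps$-wise holomorphicity into the criterion of Thm.~\ref{thm:epsDiff}, whose only nontrivial remaining hypothesis is the moderateness of $(f'_{\eps}(c_{\eps}))$; you obtain this by identifying, for $\eps$ small, the representative $m_{\eps}$ of the generalized partial derivative with the classical pointwise derivative $\partial_{1}u_{\eps}(c_{\eps})$ (via the contradiction argument of Rem.~\ref{rem:complexdiff}, exactly as you indicate) and then using $f'_{\eps}=\partial_{1}u_{\eps}+i\partial_{1}v_{\eps}$. Both routes are valid; yours buys two things: it never needs to upgrade $\RCrealrho$-\emph{partial} differentiability to full $\RCrealrho$-differentiability of $u,v$ (a step the paper's \eqref{eq:CRE-diff} implicitly relies on), and it does not need to first establish the CRE for the \emph{generalized} partial derivatives, since once $f\in\ghf$ the generalized CRE come for free from Thm.~\ref{thm:CRE}.\ref{enu:CRE}. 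It also delegates the construction of a globally defined remainder net $r_{\eps}$ to the integral formula inside Thm.~\ref{thm:epsDiff}, which makes the passage from ``$\RCcomplexrho$-differentiable at each point'' to ``GHF on $U$'' (a single net $(r_{\eps})$ for all $z$, cf.~Rem.~\ref{rem:complexdiff}) more transparent than your closing appeal to locality suggests. The paper's route, by contrast, directly exhibits the derivative $f'(z_{0})=\partial_{1}u(x_{0},y_{0})-i\partial_{2}u(x_{0},y_{0})$ in terms of the generalized partial derivatives.
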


\noindent If we only assume that $u$, $v$ are $\RCrealrho$-partially
differentiable functions on $U$ with representatives in $\text{CRE}(U_{\eps})$
and $f$ is sharply bounded on a closed ball, we obtain the following
generalized version of Montel theorem:
\begin{thm}[Montel]
In the general assumptions of the previous Thm.~\ref{thm:CRE},
let $u$, $v$ be $\RCrealrho$-partially differentiable functions
on $U$ with representatives in $\text{\emph{CRE}}(U_{\eps})$, and
assume that $f$ is sharply bounded on the closed ball $\overline{B}_{r}(z_{0})\subseteq U$,
i.e.
\begin{equation}
\exists M\in\RCrealrho_{>0}\,\forall z\in\overline{B}_{r}(z_{0}):\ |f(z)|<M.\label{eq:sharpBounded}
\end{equation}
Then $f\in\ghf(B_{r}(z_{0}))$.
\end{thm}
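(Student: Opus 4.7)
The strategy mirrors the proofs of parts \ref{enu:Goursat} and \ref{enu:strongGoursat} of Thm.~\ref{thm:CRE}: reduce to a classical statement on the representatives and then apply Thm.~\ref{thm:epsDiff}, via Cauchy's formula, to lift back to generalized holomorphicity. The classical input here will be Montel's theorem from \cite{GrMo}, which gives holomorphicity of a bounded map whose real and imaginary parts are partially differentiable and satisfy the classical CRE at every point. Since $u$, $v$ are $\RCrealrho$-partially differentiable, they are basic functions (Def.~\ref{def:partDiff}), and hence so is $f = u + iv$.

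The first step is to upgrade the sharp bound \eqref{eq:sharpBounded} to a $\rho$-moderate $\eps$-wise bound on $f_\eps := u_\eps + iv_\eps$ over the Euclidean disk $\Eball_{r_\eps}(z_{0\eps})$, where $r = [r_\eps]$, $z_0 = [z_{0\eps}]$, and $u_\eps, v_\eps \in \text{CRE}(U_\eps)$ are the given representatives. Writing $\overline{B}_r(z_0) = [\Eball_{r_\eps}(z_{0\eps})]$ by \eqref{eq:ball} and fixing a representative $M = [M_\eps]$, I would argue by contradiction in the spirit of Rem.~\ref{rem:complexdiff}.\ref{enu:epsdiff}: if for some cofinal $K \subseteq_0 I$ there were $\bar z_\eps \in \Eball_{r_\eps}(z_{0\eps})$ with $|f_\eps(\bar z_\eps)| \geq M_\eps + 1$, then extending $\bar z_\eps$ arbitrarily outside $K$ would produce $\bar z \in \overline{B}_r(z_0)$ with $|f(\bar z)| \geq_K M + 1$, contradicting \eqref{eq:sharpBounded} via \eqref{eq:orderedFieldSub}. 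Thus $|f_\eps| \leq M_\eps + 1$ on $\Eball_{r_\eps}(z_{0\eps})$ for $\eps$ small, and $M_\eps + 1$ is $\rho$-moderate. The classical Montel theorem of \cite{GrMo}, applied to each $f_\eps$ on this bounded disk, then yields $f_\eps \in \bigO(\Eball_{r_\eps}(z_{0\eps}))$ for $\eps$ small.

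With holomorphicity of $f_\eps$ established, I would invoke Thm.~\ref{thm:epsDiff} pointwise. Given $z = [z_\eps] \in B_r(z_0)$, the number $r - |z - z_0|$ is positive and invertible, so by Lem.~\ref{lem:mayer} there exists $Q \in \N$ with $\diff\rho^Q \leq r - |z - z_0|$; this ensures $B_{\diff\rho^Q}(z) \subseteq B_r(z_0) = \sint{\Eball_{r_\eps}(z_{0\eps})}$, verifying condition \ref{enu:zInt}, and $\Eball_{\rho_\eps^Q}(z_\eps) \subseteq \Eball_{r_\eps}(z_{0\eps})$ eventually, verifying condition \ref{enu:f_epsHolBas}. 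Cauchy's integral formula on a circle of radius comparable to $\rho_\eps^Q$ inside $\Eball_{r_\eps}(z_{0\eps})$ around any $[c_\eps] \in B_{\diff\rho^Q}(z)$ delivers $|f'_\eps(c_\eps)| \leq \rho_\eps^{-Q}(M_\eps + 1)$, which is $\rho$-moderate, establishing condition \ref{enu:f_eps'Mod}. Therefore Thm.~\ref{thm:epsDiff} gives $f \in \ghf(B_{\diff\rho^Q}(z))$, and since the single defining net $(f_\eps)$ works simultaneously for all $z \in B_r(z_0)$ and the remainder may be taken to be the integral expression of Thm.~\ref{thm:functionofderivative}.\ref{enu:r_epsIntForm} (globally well defined wherever $f_\eps$ is holomorphic), this local GHF structure assembles into the conclusion $f \in \ghf(B_r(z_0))$.

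The main obstacle is the first step: translating a sharp, pointwise bound of the form $|f(z)| < M$ for every $z \in \overline{B}_r(z_0)$ into a uniform $\rho$-moderate $\eps$-wise bound on $|f_\eps|$ over the Euclidean ball. Once this subpoint-style contradiction is in place, the rest parallels the Goursat- and Looman-Menchoff-type arguments already presented, with classical Montel supplying the $\eps$-wise holomorphicity and Cauchy's estimate supplying the moderate bound on $f'_\eps$ required by Thm.~\ref{thm:epsDiff}.
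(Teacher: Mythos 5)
Your proposal is correct and follows essentially the same route as the paper: the subpoint/cofinal contradiction upgrading the sharp bound \eqref{eq:sharpBounded} to an $\eps$-wise bound on $|f_{\eps}|$ over $\Eball_{r_{\eps}}(z_{0\eps})$, followed by the classical Montel theorem of \cite{GrMo} to obtain $f_{\eps}\in\bigO(\Eball_{r_{\eps}}(z_{0\eps}))$ for $\eps$ small. The only (harmless) divergence is the final step, where the paper concludes by repeating the computation \eqref{eq:CRE-diff} from the Goursat proof using the generalized CRE, whereas you verify the hypotheses of Thm.~\ref{thm:epsDiff} via a Cauchy estimate for $f'_{\eps}$; both routes are valid and already present elsewhere in the paper.
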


\begin{proof}
In fact, take representatives $u_{\eps}$, $v_{\eps}\in\text{CRE}(U_{\eps})$
of $u$, $v$ resp., and set $f_{\eps}:=u_{\eps}+iv_{\eps}$. Let
$M=[M_{\eps}]$ from \eqref{eq:sharpBounded}. Proceeding by contradiction
as in \eqref{eq:forall-h}, we can prove that
\[
\forall^{0}\eps\,\forall z\in\Eball_{r_{\eps}}(z_{0\eps}):\ |f_{\eps}(z)|<M_{\eps}.
\]
For these small $\eps$, we can hence apply the classical Montel theorem
to prove that $f_{\eps}\in\bigO(\Eball_{r_{\eps}}(z_{0\eps}))$. We
can finally argue as in \eqref{eq:CRE-diff}.
\end{proof}

\section{\protect\label{sec:Examples}Embedding of compactly supported distributions
and other examples}

The question of embedding suitable subspaces of $\mathcal{C}^{0}(\Omega)$,
$\mathcal{C}^{\infty}(\Omega)$, $\mathcal{D}'(\Omega)$ (where $\Omega\subseteq\R^{2}$
is an open subset) in spaces of GHF arises naturally. The embedding
of classical holomorphic functions $\mathcal{O}(\Omega)$ into GHF
has already been considered in Thm.~\ref{thm:epsDiff} by the canonical
map
\begin{equation}
\sigma:\mathcal{O}(\Omega)\rightarrow\ghf(\sint{\Omega}_{\text{fin}})\quad f\mapsto\left[f(-)\right]_{\rho}\label{eq:canonical}
\end{equation}
which is an injective homomorphism of algebras.

As we already mentioned above, even the embedding of Dirac delta $\delta$
and Heaviside function $H$ would allow us to consider powers $\delta^{k}$,
$H^{h}$, $k$, $h\in\N$, and compositions such as $\delta\circ\delta$,
$\delta^{k}\circ H^{h}$, $H^{h}\circ\delta^{k}$.

Classically, the idea is to associate a net of holomorphic functions
$(f_{\eps})$ to a given distribution by a regularization process
using an \emph{entire} function $\mu\in\Coo(\R^{2})$ such that
\begin{equation}
\int_{\R^{2}}\mu(x)\,\diff x=1.\label{eq:int1}
\end{equation}
For example, we can take $\mu:=\mathcal{F}^{-1}(\beta)\in\mathcal{S}(\R^{2})$
as the inverse Fourier transform of any smooth compactly supported
function $\beta\in\mathcal{D}(\R^{2})$ with $\beta(0)=1$. The Schwartz-Paley-Wiener
theorem implies that $\mu$ is an entire function that can be analytically
extended to the whole complex space. We then define a $\delta$-net
as an approximation to the identity by 
\begin{equation}
\mu_{\eps}(z):=\rho_{\eps}^{-2}\cdot\mu(\rho_{\eps}^{-1}\cdot z)\quad\forall z\in\CC\,\forall\eps.\label{eq:approxId}
\end{equation}
For every compactly supported complex valued distribution $T=(T_{1},T_{2})\in\mathcal{E}'(\R^{2},\CC)$,
the convolution $T*\mu_{\eps}$ is well-defined and given by 
\[
(T*\mu_{\eps})(z)=\langle T,\mu_{\eps}(z-\cdot)\rangle\quad\forall z\in\CC.
\]
This convolution defines an entire holomorphic function since $T$
is compactly supported and $\mu_{\eps}$ is an entire holomorphic
function, see e.g.~\cite[Exercise 27.2~(4)]{Tre1967}. Using a little
abuse of language, we call \emph{entire mollifier }any function $\mu\in\Coo(\R^{2})$
such that \eqref{eq:int1} (without any requirement about its support,
positivity or symmetry).
\begin{thm}
\label{thm:embeddingdistribution}Let $\mu\in\Coo(\R^{2})$ be a entire
mollifier, define the $\rho$-approximation to the identity $(\mu_{\eps})$
as above in \eqref{eq:approxId}. Then
\[
\iota:T\in\mathcal{E}'(\R^{2},\CC)\mapsto\left[(T*\mu_{\eps})(-)\right]\in\ghf(\RCcomplexrho)
\]
is a linear monomorphism. Moreover, the embedding $\iota$ preserves
derivatives:
\begin{equation}
\iota(\partial^{\alpha}T)=\partial^{\alpha}\iota(T)\quad\forall\alpha\in\N^{2},\label{eq:der}
\end{equation}
and hence for all $z=x_{0}+iy_{0}\in\RCcomplexrho$
\begin{align}
\iota(T)'(z) & =\iota(\partial_{1}T_{1})(x_{0},y_{0})+i\iota(\partial_{1}T_{2})(x_{0},y_{0})\nonumber \\
 & =\iota(\partial_{2}T_{2})(x_{0},y_{0})-i\iota(\partial_{2}T_{1})(x_{0},y_{0}).\label{eq:CRE-distr}
\end{align}
\end{thm}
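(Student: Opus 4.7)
The plan is to combine the criterion Thm.~\ref{thm:epsDiff} with classical properties of $T\ast\mu_\eps$, plus the standard mollification limit for injectivity. First, for each $T\in\mathcal{E}'(\R^2,\CC)$, the net $f_\eps:=T\ast\mu_\eps$ consists of entire functions on $\CC$, since $T$ is compactly supported and $\mu_\eps$ extends entirely. Using the distributional order bound $|\langle T,\phi\rangle|\leq C\sup_{|\alpha|\leq N,\,y\in K}|\partial^\alpha\phi(y)|$ together with the Paley-Wiener estimates on $\mu$ and the scaling $\partial^\alpha\mu_\eps(\cdot)=\rho_\eps^{-2-|\alpha|}(\partial^\alpha\mu)(\rho_\eps^{-1}\cdot)$, I would show that $(f_\eps(z_\eps))$ and $(f'_\eps(z_\eps))$ are $\rho$-moderate for every $z=[z_\eps]\in\RCcomplexrho$, and that the class $[f_\eps(z_\eps)]$ is independent of the representative (via a chord estimate $|f_\eps(z_\eps)-f_\eps(\bar z_\eps)|\leq|z_\eps-\bar z_\eps|\cdot\sup|f'_\eps|$ on the segment joining two representatives). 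This makes $\iota(T)$ a basic function of the type $\RCcomplexrho\to\RCcomplexrho$ and verifies the hypotheses \ref{enu:zInt}--\ref{enu:f_eps'Mod} of Thm.~\ref{thm:epsDiff} at every $z$, yielding $\iota(T)\in\ghf(\RCcomplexrho)$.

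Linearity of $\iota$ is immediate from linearity of convolution. To prove injectivity, I would suppose $\iota(T)=0$: then $[f_\eps(x)]=0$ for every standard $x\in\R^2$, i.e., $f_\eps(x)$ is infinitesimal of arbitrary order in $\rho_\eps$. For an arbitrary test function $\phi\in\mathcal{D}(\R^2)$, the classical identity $\int\phi(x)f_\eps(x)\,\diff x=\langle T,\check{\mu}_\eps\ast\phi\rangle$ (with $\check{\mu}_\eps(z):=\mu_\eps(-z)$) together with $\check{\mu}_\eps\ast\phi\to\phi$ in $\mathcal{D}$ yields $\langle T,\phi\rangle=\lim_{\eps\to 0^+}\int\phi f_\eps\,\diff x$; dominated convergence, using $\rho$-moderateness of $f_\eps$ and compact support of $\phi$, forces the right-hand side to vanish, hence $T=0$ in $\mathcal{D}'$.

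For \eqref{eq:der}, I would invoke the classical identity $\partial^\alpha(T\ast\mu_\eps)=(\partial^\alpha T)\ast\mu_\eps$, combined with Thm.~\ref{thm:functionofderivative}.\ref{enu:f_eps_locHol}, which shows that generalized derivatives of $\iota(T)$ are represented precisely by the $\eps$-wise derivatives of $f_\eps$. The explicit formula \eqref{eq:CRE-distr} then follows by applying the CRE from Thm.~\ref{thm:CRE}.\ref{enu:CRE} to $\iota(T)=u+iv$, combined with the identities $\partial_j u=\iota(\partial_j T_1)$ and $\partial_j v=\iota(\partial_j T_2)$ coming from \eqref{eq:der}, and the formula $\iota(T)'(z)=\partial_1 u+i\partial_1 v$ obtained during the proof of the CRE. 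The main obstacle will be the $\rho$-moderateness analysis at infinite points of $\RCcomplexrho$, where the exponential growth of the entire extension of $\mu$ in the imaginary direction must be controlled by the compact support of $T$ and by derivative-reducing integration by parts inside $\langle T,\mu_\eps(z-\cdot)\rangle$; a secondary delicacy is the dominated convergence step in the injectivity argument, which must be carried out at the $\eps$-level with uniform control in a fixed neighborhood of the support of $\phi$.
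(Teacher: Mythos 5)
Your overall strategy coincides with the paper's: reduce everything to the criterion of Thm.~\ref{thm:epsDiff}, establish $\rho$-moderateness of $(T*\mu_\eps)(z_\eps)$ and of the first derivative from the compact support and finite order of $T$ together with the scaling of $\mu_\eps$ (the paper writes $T=\sum_{|\alpha|\le m}\partial^\alpha g_\alpha$ and changes variables, which is interchangeable with your seminorm estimate), get independence of representatives from a Lipschitz bound on $f_\eps$, obtain \eqref{eq:der} from $(\partial^\alpha T)*\mu_\eps=\partial^\alpha(T*\mu_\eps)$, and deduce \eqref{eq:CRE-distr} from the CRE. Incidentally, your announced ``main obstacle'' about exponential growth of $\mu$ in the imaginary direction is not an issue: the evaluation points $z_\eps-x$ with $x\in\operatorname{supp}T$ stay in $\CC\cong\R^2$, and the order estimate already gives a bound uniform in the moderate net $(z_\eps)$.

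The genuine gap is in the injectivity argument. From $\iota(T)=0$ you only extract that $f_\eps(x)\to0$ faster than any power of $\rho_\eps$ \emph{at each fixed standard} $x$, and then you invoke ``dominated convergence, using $\rho$-moderateness of $f_\eps$''. This does not close: moderateness only gives $|f_\eps|\le C\rho_\eps^{-M}$ on the support of $\phi$, which is not a dominating function, and pointwise decay at every standard point together with a blowing-up uniform bound does not imply $\int\phi f_\eps\to0$ (a bump of height $\rho_\eps^{-2}$ and width $\rho_\eps$ drifting toward a point converges to $0$ at every standard point while its integral diverges; no Egorov- or equicontinuity-type rescue is available since the Lipschitz constants of $f_\eps$ also blow up). You flag the needed ``uniform control'' as a delicacy but do not supply it, and it cannot be supplied from vanishing at standard points alone. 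The paper's fix uses the hypothesis at \emph{nonstandard} generalized points: if $T*\mu_\eps\not\to0$ uniformly on some compact $L$, pick $z_k\in L$ and $\eps_k\downarrow0$ with $|(T*\mu_{\eps_k})(z_k)|\ge c>0$, interleave them into a moderate net $(z_\eps)$, and obtain a finite generalized point $[z_\eps]\in\RCcomplexrho$ at which $\iota(T)$ does not vanish --- a contradiction. With uniform convergence on compacts in hand, $T*\mu_\eps\to0$ in $\mathcal{D}'$ and hence $T=0$. Your proof needs this step (or an equivalent use of the hypothesis at generalized points built from sequences) to be complete.
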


\begin{proof}
By Thm.~\ref{thm:epsDiff}, we have to show that for all $z=[z_{\eps}]\in\RCcomplexrho$,
$T*\mu_{\eps}$ defines a basic function of the type $\RCcomplexrho\ra\RCcomplexrho$,
and $\left((T*\mu_{\eps})'(c_{\eps})\right)\in\CC_{\rho}$ for all
$[c_{\eps}]\in\RCcomplexrho$. Since $T$ is compactly supported,
we can write $T=\sum_{|\alpha|\le m}\partial^{\alpha}g_{\alpha}$
for $m\in\N$ and compactly supported functions $g_{\alpha}\in\mathcal{C}^{0}(\R^{2})$.
We therefore have
\begin{align}
(T*\mu_{\eps})(z_{\eps}) & =\sum_{|\alpha|\le m}(-1)^{|\alpha|}\int_{\R^{2}}g_{\alpha}(x)\partial^{\alpha}\mu_{\eps}(z_{\eps}-x)\,\diff x\label{eq:wellDef}\\
 & =\sum_{|\alpha|\le m}(-1)^{|\alpha|}\int_{\R^{2}}g_{\alpha}(z_{\eps}-y)\partial^{\alpha}\mu_{\eps}(y)\,\diff x\nonumber \\
 & =\sum_{|\alpha|\le m}(-1)^{|\alpha|}\int_{\R^{2}}g_{\alpha}(z_{\eps}-\rho_{\eps}w)\rho_{\eps}^{-|\alpha|}\partial^{\alpha}\mu(w)\,\diff w\nonumber \\
 & =\mathcal{O}(\rho_{\eps}^{-m}).\nonumber 
\end{align}
Since $(T*\mu_{\eps})'=T*\mu_{\eps}'$, the same argument applies
to the derivatives. Finally, from \eqref{eq:wellDef} it also follows
independence from representatives since the same property holds for
$[\mu_{\eps}(\cdot-x)]$. To show injectivity, assume that $\left[(T*\mu_{\eps})(-)\right]=0$.
Since $T*\mu_{\eps}\to T$ in $\mathcal{D}'(\R^{2})$, in order to
prove that $T=0$, it suffices to demonstrate that $T*\mu_{\eps}\to0$
as $\eps\to0$ uniformly on compact subsets. Suppose this were not
the case, so that we could find some compact $L\Subset\R^{2}$, some
$c>0$, a sequence $\eps_{k}\downarrow0$ and $z_{k}\in L$ such that
$|\left(T*\mu_{\eps}\right)(z_{k})|\ge c$ for all $k$. Fixing any
$z\in\R^{2}$ and setting $z_{\eps}:=z_{k}$ for $\eps=\eps_{k}$
and $z_{\eps}=z$ otherwise, we define an element $[z_{\eps}]\in\RCcomplexrho$
with $\left[(T*\mu_{\eps})(z_{\eps})\right]\ne0$, a contradiction.
Note that the generalized point $[z_{\eps}]$ is finite, and hence
the embedding of $T$ is actually uniquely determined by its values
on finite points of $\RCcomplexrho$.

For \eqref{eq:der}, we have
\[
\iota(\partial^{\alpha}T)_{\eps}=\left(\partial^{\alpha}T\right)*\mu_{\eps}=\partial^{\alpha}\left(T*\mu_{\eps}\right)=\partial^{\alpha}\iota(T)_{\eps}.
\]
Therefore, \eqref{eq:CRE-distr} follows from \eqref{eq:der} and
the CRE for GHF, i.e.~Thm.~\ref{thm:CRE}.
\end{proof}
In particular, if we assume that $T=\delta$, then we have a Dirac
delta embedded as a GHF, defined by
\[
\delta_{\eps}(z):=\left(\delta*\mu_{\eps}\right)(z)=\mu_{\eps}(z)=\rho_{\eps}^{-2}\mu\left(\rho_{\eps}^{-1}z\right).
\]
Since $\mu$ is an entire function, this also clearly confirms that
Dirac delta is in fact a GHF given by $\delta(z)=\diff\rho^{-2}\mu\left(\frac{z}{\diff\rho}\right)\in\RCcomplexrho$
and defined on $\RCcomplexrho$, and it allows us to include in this
theory a large family of interesting examples, sometime informally
studied in physics (cf.~\cite{Brewster2018,Bag,Dav,FrPS,Kai,Lin,Sma}).

From \eqref{eq:int1} and the identity theorem for $\mu$, it follows
that $\int_{\R}\mu=:c\ne0$. Therefore, we can also consider $\mu_{1}(x):=\frac{1}{c}\mu(x)$
for all $x\in\R$. Then $\delta_{1}(x):=\diff\rho^{-1}\mu_{1}\left(\frac{x}{\diff\rho}\right)$
for all $x\in\RCrealrho$ is defined by the $1$-dimensional $\delta$-net
$\rho_{\eps}^{1}\mu_{1}(\rho_{\eps}^{-1}x)\to\delta$ in $\mathcal{D}'(\R)$
(and it can also be considered the $\mu_{1}$-embedding of the Dirac
delta as a GSF; this formula firstly appears in \cite{Aragona2005}).
This also yields the link to the GHF $\delta(z)$: in fact $\delta(x)=c\diff\rho^{-1}\delta_{1}(x)$
for all $x\in\RCrealrho$.

Note also that $\delta(0)=\frac{\mu(0)}{\diff\rho^{2}}$ is an infinite
number. Moreover, if we take $\mu\in\mathcal{S}(\R^{2})$ and $x\in\RCrealrho$
``far'' from the origin, i.e.~$|x|\ge r\in\R_{>0}$, then $\delta(x)=0$
in $\RCcomplexrho$ because $|\delta(x)|\le C\cdot r^{-n}\cdot\diff\rho^{n-2}$
for some $C\in\R_{>0}$ and for all $n\in\N$.

Integral properties of this Dirac delta will be considered in \cite{NuGi24a};
sufficient conditions for the identity theorem will be explored in
\cite{NuGi24b} (see Sec.~\ref{subsec:Comparison-with-Colombeau}
below for some initial considerations).\\

The entire mollifier $\mu$ is another free variable of our mathematical
structure and it has to be chosen depending on the regularization
properties we need, see also Sec.~\ref{subsec:Comparison-with-Colombeau}
for a more complete account on this idea.\\

A second way, inspired by \cite{GrMo}, to get a meaningful class
of examples of GHF is to regularize a locally integrable function
convolving with a \emph{compactly supported} mollifier. However, since
necessarily we are not smoothing with an entire mollifier, we have
to assume that distributional CRE hold:
\begin{thm}
\label{thm:emb2}Let $\Omega$ be an open subset of $\CC$. Let $k\in\mathcal{D}(\Omega)$
with $\int_{\R^{2}}k=1$, and define $k_{\eps}(z):=\rho_{\eps}^{-2}\cdot k(\rho_{\eps}^{-1}\cdot z)$
for all $z\in\CC$. If $f$ is locally integrable on $\Omega$ and,
as a distribution, satisfies the CRE, then $f$ can be identified
with the GHF defined by
\[
\jmath(f):[z_{\eps}]\in d_{f}\mapsto\left[(f*k_{\eps})(z_{\eps})\right]\in\RCcomplexrho,
\]
in the sense that $\jmath(f)=\jmath(g)$ implies $f=g$. We recall
that the domain $d_{f}$ has been defined in Thm.~\ref{thm:epsDiff}
(in the present case with $U_{\eps}=\Omega$). This embedding preserved
weak derivatives of $f$, assuming that they exists: $\jmath(\partial^{\alpha}f)=\partial^{\alpha}\jmath(T)$.

\noindent Moreover, if $f\in\mathcal{C}^{0}(\Omega)$, then
\begin{enumerate}
\item \label{enu:stPartFun}The standard part of this embedding yields back
the given function, which necessarily is holomorphic: $\st{\jmath(f)}|_{\Omega}=f\in\bigO(\Omega)$,
\item \label{enu:noNonArch}If the standard part does not attains non-Archimedean
values, i.e.~$\jmath(f)|_{\Omega}\subseteq\CC$, then $\jmath(f)|_{\Omega}=f\in\bigO(\Omega)$.
\end{enumerate}
\end{thm}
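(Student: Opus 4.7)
The plan is to combine classical facts about mollification with Thm.~\ref{thm:epsDiff} and the result of \cite{GrMo} that a continuous function satisfying the distributional CRE is holomorphic. First I would exploit that distributional CRE means $\bar\partial f = 0$, so that $\bar\partial(f\ast k_\eps) = (\bar\partial f)\ast k_\eps = 0$; hence each $f\ast k_\eps$ is holomorphic on the open set $\Omega_\eps := \{z \in \CC \mid z - \mathrm{supp}(k_\eps) \subseteq \Omega\}$, and these exhaust $\Omega$ as $\eps \to 0$. From $\|k_\eps\|_\infty = \bigO(\rho_\eps^{-2})$ and $\|k'_\eps\|_\infty = \bigO(\rho_\eps^{-3})$ together with local integrability of $f$, one obtains the moderateness estimates $|(f\ast k_\eps)(z)| = \bigO(\rho_\eps^{-2})$ and $|(f\ast k_\eps)'(z)| = \bigO(\rho_\eps^{-3})$, uniformly on any compact set staying at positive distance from $\partial\Omega$. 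These, together with a mean value argument for independence of representatives, verify the hypotheses of Thm.~\ref{thm:epsDiff} at every $z \in d_f$, yielding $\jmath(f) \in \ghf(d_f)$; the preservation of weak derivatives $\jmath(\partial^\alpha f) = \partial^\alpha \jmath(f)$ then follows from the classical identity $\partial^\alpha(f\ast k_\eps) = (\partial^\alpha f)\ast k_\eps$.

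For injectivity I would mimic the argument in Thm.~\ref{thm:embeddingdistribution}: assuming $\jmath(f)=\jmath(g)$, set $h := f-g$ and suppose for contradiction that $h\ast k_\eps$ does not tend to $0$ uniformly on some compact $L \Subset \Omega$. Extract $\eps_k \downarrow 0$ and $z_k \in L$ with $|(h\ast k_{\eps_k})(z_k)| \geq c > 0$; fixing any $z^* \in \Omega$ and setting $z_\eps := z_k$ for $\eps \in (\eps_{k+1}, \eps_k]$ and $z_\eps := z^*$ otherwise produces a finite point $[z_\eps] \in d_f$ with $\jmath(h)([z_\eps]) \neq 0$, a contradiction. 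Hence $h\ast k_\eps \to 0$ uniformly on compact subsets of $\Omega$; since also $h\ast k_\eps \to h$ in $\mathcal{D}'(\Omega)$, this forces $h=0$ in $\mathcal{D}'(\Omega)$, and therefore $f=g$ almost everywhere.

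Finally, for $f \in \mathcal{C}^0(\Omega)$, uniform convergence of $f\ast k_\eps$ to $f$ on compact subsets of $\Omega$ gives $\st{\jmath(f)(z_0)} = \lim_{\eps \to 0}(f\ast k_\eps)(z_0) = f(z_0)$ for every $z_0 \in \Omega$; the distributional CRE combined with the theorem of \cite{GrMo} then forces $f \in \bigO(\Omega)$, which is part (i). For (ii), if $\jmath(f)(z_0) \in \CC$ for every standard $z_0 \in \Omega$, then $\jmath(f)(z_0) = \st{\jmath(f)(z_0)} = f(z_0)$ by (i), and holomorphy again follows from (i). I expect the principal obstacle to be the injectivity step: as in Thm.~\ref{thm:embeddingdistribution} one has to assemble a generalized point detecting a pointwise failure of uniform convergence, but now $f$ is merely locally integrable rather than compactly supported, so care is needed to ensure that the assembled point $[z_\eps]$ remains in $d_f$ (staying away from $\partial\Omega$ in the sharp topology) while still witnessing the non-negligibility of $(h\ast k_\eps)(z_\eps)$.
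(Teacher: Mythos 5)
Your proposal is correct and follows essentially the same route as the paper: holomorphy of $f\ast k_{\eps}$ via $\bar\partial(f\ast k_{\eps})=(\bar\partial f)\ast k_{\eps}=0$, moderateness estimates to invoke Thm.~\ref{thm:epsDiff}, injectivity by assembling a generalized point from a failure of uniform convergence as in Thm.~\ref{thm:embeddingdistribution}, and the standard-part statements from uniform convergence on compacta. The only cosmetic difference is in part (i), where you cite the result of \cite{GrMo} directly while the paper rederives holomorphy of $f$ by passing to the limit in Cauchy's integral formula; both are valid.
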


\begin{proof}
We have to use again Thm.~\ref{thm:epsDiff}, but in order to show
that $f_{\eps}:=f*k_{\eps}\in\bigO(\Eball_{\rho_{\eps}^{q}}(z_{\eps}))$
we have to use the ideas of \cite{GrMo} (and \cite{Zal}) and the
assumption that CRE holds for weak derivatives of $f$. For the sake
of completeness, we quickly repeat here the arguments of \cite{GrMo}.

The function $f_{\eps}$ is smooth because $k_{\eps}$ is smooth.
Let $\frac{\partial}{\partial\overline{z}}=\frac{1}{2}\left(\frac{\partial}{\partial x}+i\frac{\partial}{\partial y}\right)$,
so that asserting that $f$ as a distribution satisfies CRE means
that
\begin{align*}
\frac{\partial f_{\eps}}{\partial\overline{z}} & =\frac{\partial}{\partial\overline{z}}\int_{\R^{2}}f(\zeta)\mu_{\eps}(z-\zeta)\,\diff\zeta=\int_{\R^{2}}f(\zeta)\frac{\partial}{\partial\overline{z}}\mu_{\eps}(z-\zeta)\,\diff\zeta\\
 & =-\int_{\R^{2}}f(\zeta)\frac{\partial}{\partial\overline{\zeta}}\mu_{\eps}(z-\zeta)\,\diff\zeta=0,
\end{align*}
because $\mu_{\eps}\in\mathcal{D}(\Omega)$ is a test function. Thus,
$f_{\eps}$ is a $\mathcal{C}^{\infty}$-function satisfying the CRE
and hence is a holomorphic function. We can now proceed exactly as
in Thm.~\ref{thm:embeddingdistribution} in order to apply Thm.~\ref{thm:epsDiff}.
Also the preservation of weak derivatives follows like in Thm.~\ref{thm:embeddingdistribution}
.

If $f\in\mathcal{C}^{0}(\Omega)$, then $f_{\eps}\to f$ in $\Omega$
as $\eps\to0^{+}$. Consider a standard point $z\in\Omega$ and taking
as $\gamma$ the boundary of any $\Eball_{r}(z)\subseteq\Omega$,
by Cauchy's integral formula, we have
\[
f(z)=\lim_{\eps\rightarrow0^{+}}f_{\eps}(z)=\lim_{\eps\rightarrow0^{+}}\frac{1}{2\pi i}\int_{\gamma}\frac{f_{\eps}(\zeta)}{\zeta-z}\,\diff\zeta=\frac{1}{2\pi i}\int_{\gamma}\frac{f(\zeta)}{\zeta-z}\,\diff\zeta,
\]
so that $f\in\bigO(\Omega)$. Finally, $\st{\jmath(f)}(z)=\st{\jmath(f)(z)}=\left[\lim_{\eps\to0}f_{\eps}(z)\right]=f(z)$
and hence, if $\jmath(f)(z)\in\CC$, then $\st{\jmath(f)}(z)=\jmath(f)(z)=f(z)$.
\end{proof}
\begin{rem}
~
\begin{enumerate}
\item Following the definition of \cite{Eva}, the previous Thm.~\ref{thm:emb2}
allows us to include a subset of Sobolev spaces as GHF.
\item It is important to note that $\jmath(f)(z)=[f_{\eps}(z_{\eps})]$
is very different from its standard part, exactly as $\delta|_{\R}=0$
whereas $\delta(0)$ is an infinite number. The embedded GHF $\jmath(f)$
keeps all the smooth non-Archimedean information gained with the regularization
$f_{\eps}=f*k_{\eps}$. This is also confirmed by property \ref{enu:noNonArch}
of Thm.~\ref{thm:emb2}: if $\jmath(f)|_{\Omega}$ does not have
any non-Archimedean information, then the smoothing process is useless
and $\jmath(f)$ is only an extension of $f$ to generalized numbers
of $d_{f}\subseteq\RCcomplexrho$, i.e.~it equals the canonical embedding
\eqref{eq:canonical}.
\item Clearly, Thm.~\ref{thm:embeddingdistribution} allows us to also
embed bump functions or a nowhere analytic smooth function multiplied
by a bump which constantly equals $1$ on a neighborhood of a given
$\Omega$. In the former case, in \cite{NuGi24b} we will show that
any Taylor hyperseries of a GHF \emph{always} has a strictly positive
radius of convergence $r\in\RCrealrho_{>0}$. In case the center of
the hyperseries is a flat point, then the radius $r\approx0$ is infinitesimal:
indeed, a bump function identically equals $0$ (in $\RCcomplexrho$)
infinitely close to flat points. In the latter case, the regularization
process with holomorphic functions allows one to overcome the limitation
given by estimates \ref{enu:epsDerAreModerate} of Thm.~\ref{thm:epsDiff},
which does not hold for nowhere analytic smooth functions.
\end{enumerate}
\end{rem}

Finally, as we will explore in the subsequent paper of this series
(\cite{NuGi24a}), the path integral can be used to define a primitive
of generalized holomorphic functions, which opens up the possibility
of embedding a broader class of distributions, e.g.~a Heaviside like
function.

\section{\protect\label{subsec:Comparison-with-Colombeau}Comparison with
Colombeau theory and conclusions}

In Sec.~\ref{subsec:epsHol}, by comparing Thm.~\ref{thm:epsDiff}
with \cite[Thm.~2]{Oberguggenberger2007}, we already showed that
Colombeau generalized holomorphic functions are all included as GHF.
The family of examples given in the previous section shows that this
inclusion in meaningfully strict. See \cite{DHPV,Delcroix2005} for
the relations about how to include periodic hyperfunctions in Colombeau
type algebras. In this closing section, we want to underscore some
analogies and differences between our construction and Colombeau theory.

First of all, in both theories, Schwartz impossibility theorem about
multiplication of distributions (see e.g.~\cite{Grosser2001} and
references therein) is bypassed because only the pointwise product
of ordinary holomorphic functions is preserved, not that of continuous
functions. Using Thm.~\ref{thm:embeddingdistribution}, in general
we only have $\iota(f)\cdot\iota(g)\approx\iota(f\cdot g)$ if $f$,
$g$ are continuous and compactly supported, where $\approx$ has
been defined in Def.~\ref{def:nonArchNumbs}. This is clearly very
natural if one thinks at the regularization process of any embedding
and hence the related augmented non-Archimedean information.

A direct comparison between generally recognized technical drawbacks
of Colombeau theory and the theory of GHF are as follows:
\begin{enumerate}
\item Colombeau generalized functions are not closed with respect to composition,
and only a sufficient condition is possible (see \cite[Def.~1.2.7]{Grosser2001}),
whereas GHF are closed with respect to composition, see Thm.~\ref{thm:chain};
\item In general, Colombeau generalized functions can be defined only on
finite points of $\sint{\Omega}_{\mathrm{fin}}$, whereas GHF can
be defined on more general domains, see Def.~\ref{def:complexdiff}.
This implies that we can consider solutions of differential equations
e.g.~defined on infinitesimal interval and that cannot be extended
or defined in domains containing infinite numbers. For example, even
in the real case of GSF, consider $y'=-\frac{t}{1+y}\cdot\frac{1}{h}$,
$y(0)=0$, where $h$ is a positive invertible infinitesimal, and
whose real solution is $y(t)=-1+\sqrt{1-\frac{t^{2}}{h}}$, $t\in(-\sqrt{h},\sqrt{h})$.
This limitation of Colombeau theory is essentially due to the bias
of defining a sheaf of generalized functions starting from open sets
$\Omega\subseteq\R^{n}$ in order to have a more direct comparison
with classical distribution theory. In our construction, we started
assuming a non-Archimedean point of view by changing the ring of scalars.
\item Colombeau theory naturally assumes the possibility to take arbitrary
derivatives and requires, by definition, that generalized holomorphic
functions satisfy the CRE. On the contrary, we made an effort of asking
only a first order condition, so that CRE, Goursat, Looman-Menchof
and Montel theorems of Sec.~\ref{subsec:The-Cauchy-Riemann-equations}
can be proved. Similarly, a more flexible criterion of holomorphicity
is also provable, Thm.~\ref{thm:epsDiff}.
\item Colombeau theory of generalized holomorphic functions, see e.g.~\cite{Vernaeve2008},
uses ordinary series $\sum_{n\in\N}\frac{f^{(n)}(z_{0})}{n!}(z-z_{0})^{n}$
and hence, as we already explained in Sec.~\ref{subsec:The-Ring-ofRC},
good convergence result are not possible. The use of hyperseries $\hypersum{\rho}{\sigma}\frac{f^{(n)}(z_{0})}{n!}(z-z_{0})^{n}$,
i.e.~of series extended over \emph{infinite} generalized hypernatural
numbers, in \cite{NuGi24b} solves this type of problems.
\item For the sake of completeness, we only mention here that the notion
of Fourier transform of Colombeau generalized functions presents several
limitations in important theorems, such as the inversion one. On the
contrary, with the notion of hyperfinite Fourier transform, we can
consider the Fourier transform of any GSF, without restriction to
tempered type, and these limitations are removed, see \cite{MTG}.
\end{enumerate}
Concerning the intrinsic embedding of distributions, which is frequently
presented as a drawback of Colombeau theory, first of all we have
to explicitly state that this objection is wrong, because it is well-known
that considering a suitable index set instead of $I=(0,1]$, allows
one to have an intrinsic embedding, see \cite{Grosser2001}.\medskip{}

On the other hand, in the present paper we tried to underscore a different
point of view: at least since the work of Leray \cite{Ler}, smoothing
possible singularities using the dependence from a suitable parameter
$\eps$ is now a natural and frequently used method (see e.g.~\cite{BLL,Bet,MonT,BJ,LPPZ}
for recent works using similar methods). Therefore, in our opinion,
the present theory of GHF has to be considered as the introduction
of a mathematical structure of the form $(\rho,\ghf(-),\mu,k)$, where
$\mu$ is any entire mollifier for the embedding of compactly supported
distributions, and $k$ is a compactly supported function with integral
$1$ for the embedding of locally integrable functions satisfying
the distributional CRE. The free variables $\rho$, $\mu$ and $k$
need to be chosen depending on the regularization problem we have
to solve. Searching for hypothetical ``best or intrinsic'' values
of these variables would imply having less freedom in this method.\medskip{}

\noindent However, it is interesting to note that this freedom is
already necessarily constrained by the following uniqueness result
of the embedding of Thm\@.~\ref{thm:embeddingdistribution}:
\begin{thm}
\label{thm:uniquenessMoll}Let $\mu_{1}$, $\mu_{2}\in\Coo(\R^{2})$
be entire mollifiers and $\frac{\delta_{1}^{(n)}(0)}{n!}=[\delta_{1n,\eps}]$,
$\frac{\delta_{2}^{(n)}(0)}{n!}=[\delta_{2n,\eps}]$ be the coefficients
of the Taylor formula at $0$ of the corresponding Dirac delta GHF,
i.e.~$\delta_{1}(z)=\diff\rho^{-2}\mu_{1}(\diff\rho^{-1}z)$ and
$\delta_{2}(z)=\diff\rho^{-2}\mu_{2}(\diff\rho^{-1}z)$ respectively.
If these coefficients are strongly $\rho$-equivalent, denoted by
$(\delta_{1n,\eps})\simeq_{\rho}(\delta_{2n,\eps})$ as in \cite[Def.~3.(ii)]{Tiwari2023},
i.e. 
\begin{equation}
\forall q,r\in\N\,\forall^{0}\eps\,\forall n\in\N:|\delta_{1n,\eps}-\delta_{2n,\eps}|\leq\rho_{\eps}^{nq+r},\label{eq:strongrhoequivalent}
\end{equation}
then $\mu_{1}=\mu_{2}$. In other words, if the embedding $\iota(-)$
of Thm.~\ref{thm:embeddingdistribution} preserves the Taylor formula
of Dirac delta, then it does not depend on the entire mollifier $\mu$.
\end{thm}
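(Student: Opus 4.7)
The plan is to translate the hypothesis \eqref{eq:strongrhoequivalent} into a statement about the classical Taylor coefficients of $\mu_1$ and $\mu_2$ at $0$, observe that the only way an estimate of that strength can hold with $\eps$-independent constants is if the coefficients agree, and then invoke the identity theorem for entire functions.

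First I would compute the generalized Taylor coefficients explicitly. Since $\delta_i(z) = \diff\rho^{-2}\mu_i(\diff\rho^{-1}z)$ for $i=1,2$, iterating the chain rule at the $\eps$-level and using that $\mu_i$ is entire gives
\[
\frac{\delta_i^{(n)}(0)}{n!} = \left[\rho_\eps^{-2-n}\cdot\frac{\mu_i^{(n)}(0)}{n!}\right],
\]
so a natural representative is $\delta_{in,\eps} = \rho_\eps^{-2-n}\cdot a_{in}$ with $a_{in} := \mu_i^{(n)}(0)/n!\in\CC$ \emph{independent of} $\eps$. Hence the strong $\rho$-equivalence \eqref{eq:strongrhoequivalent} rewrites as
\[
\forall q,r\in\N\,\forall^0\eps\,\forall n\in\N:\quad |a_{1n}-a_{2n}|\le\rho_\eps^{n(q+1)+r+2}.
\]

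Next, I would fix an arbitrary $n\in\N$ and specialise, say, $q=r=0$ to obtain $|a_{1n}-a_{2n}|\le\rho_\eps^{n+2}$ for all sufficiently small $\eps$. Since $a_{1n}-a_{2n}$ does not depend on $\eps$ and $\rho_\eps\to 0^+$, letting $\eps\to 0^+$ forces $a_{1n}=a_{2n}$. This holds for every $n\in\N$, so $\mu_1^{(n)}(0)=\mu_2^{(n)}(0)$ for all $n$.

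Finally, both $\mu_1$ and $\mu_2$ are entire on $\CC$ (by the Paley--Wiener remarks preceding Thm.~\ref{thm:embeddingdistribution}, or directly by assumption), and hence coincide with their Taylor series about $0$ on all of $\CC$. Equality of all Taylor coefficients therefore gives $\mu_1=\mu_2$ pointwise on $\CC$, restricting to $\R^2$ as required. I do not expect any serious obstacle here: the substantive content is the algebraic computation that strips the infinite prefactor $\rho_\eps^{-2-n}$ off the generalized coefficient, and the key structural fact being used is that the $a_{in}$ are \emph{standard} complex numbers, so a single $\rho^{N}$-estimate with $N>0$ already collapses their difference to zero. The only point to double-check when writing the proof in full is the justification that one may select a representative of $\frac{\delta_i^{(n)}(0)}{n!}$ of the displayed form simultaneously for all $n$, which is immediate since the displayed net is $\rho$-moderate for each $n$ and the hypothesis is required to hold for some representatives.
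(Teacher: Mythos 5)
Your proposal is correct and follows essentially the same route as the paper's proof: write $\delta_{in,\eps}=\rho_\eps^{-n-2}\mu_i^{(n)}(0)/n!$, specialise $q$ and $r$ in \eqref{eq:strongrhoequivalent} to get an $\eps$-independent difference bounded by a positive power of $\rho_\eps$, let $\eps\to0^+$, and conclude by the identity theorem for entire functions. The only (immaterial) difference is the choice $q=r=0$ instead of the paper's $q=1$, $r=0$.
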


\noindent This theorem will be fully clear only after the introduction,
in \cite{NuGi24b}, of the notion of equality between hyperseries,
which is given exactly by \eqref{eq:strongrhoequivalent}, (see also
\cite{Tiwari2023} for the case of generalized real analytic functions).
However, it can already be glimpsed looking at \ref{enu:epsDerAreModerate}
of Thm.~\ref{thm:epsDiff}: the upper bound
\[
\exists Q,R\in\R_{>0}\,\forall^{0}\eps\,\forall n\in\N:\ \left|\frac{f_{\eps}^{(n)}(z_{\eps})}{n!}\right|\le\rho_{\eps}^{-nQ-R}
\]
is analogous to the moderateness condition in Def.~\ref{def:RCring},
and hence \eqref{eq:strongrhoequivalent} corresponds to the related
negligibility condition.
\begin{proof}[Proof of Thm.~\ref{thm:uniquenessMoll}]
For all $n\in\N$, we have
\[
\delta_{1n,\eps}=\frac{\delta_{1\eps}^{(n)}(0)}{n!}=\frac{\rho_{\eps}^{-n-2}\mu_{1}^{(n)}(0)}{n!}\;\text{and}\;\delta_{2n,\eps}=\frac{\delta_{2\eps}^{(n)}(0)}{n!}=\frac{\rho_{\eps}^{-n-2}\mu_{2}^{(n)}(0)}{n!}.
\]
Setting $q=1$ and $r=0$ in \eqref{eq:strongrhoequivalent}, for
$\eps$ small and for all $n\in\N$, we have 
\[
\left|\frac{\mu_{1}^{(n)}(0)}{n!}-\frac{\mu_{2}^{(n)}(0)}{n!}\right|\leq\rho_{\eps}^{nq-n+2+r}\leq\rho_{\eps}^{2}.
\]
Taking $\eps\to0$, we can conclude that $\frac{\mu_{1}^{(n)}(0)}{n!}=\frac{\mu_{2}^{(n)}(0)}{n!}$,
for all $n\in\N$. Then $\mu_{1}=\mu_{2}$ since $\mu_{1}$ and $\mu_{2}$
are entire functions.
\end{proof}
\noindent Stating this uniqueness result in different words: if we
need to chose the entire mollifier $\mu$ in order to have specific
smoothing property, this corresponds to chose a particular form of
the Taylor formula of the corresponding Dirac delta. Therefore, the
freedom in choosing $\mu$ corresponds to the freedom of selecting
a matched Dirac delta embedded as a GHF. This is also natural if we
think at the infinite amount of non-Archimedean properties that $\delta(-)\in\ghf(\RCcomplexrho)$
satisfies whilst the classical ``macroscopic'' version $(\phi\in\mathcal{D}(\R^{2})\mapsto\phi(0)\in\R)\in\mathcal{D}'(\R^{2})$
does not.\medskip{}

For the sake of clarity, we close this section showing that the classical
ideas to prove the identity theorem for holomorphic function can also
be repeated for GHF. However, the identity principle does not hold
in our framework (see also \cite[Thm. 39 and 40]{Tiwari2023} for
the case of generalized real analytic functions) exactly because we
are in a non-Archimedean setting: e.g.~every interval of $\RCrealrho$
is not connected in the sharp topology because the set of all the
infinitesimals is a clopen set, see e.g.~\cite{Giordano2013}. Therefore,
repeating the classical proof of the identity theorem leads us to
state that the set of points where two given GHF agree is ``only''
a clopen set.

In \cite{NuGi24b} we will be able to prove the following natural
\begin{lem}
\label{lem:identitythm}Let $f$, $g\in\ghf(U,\RCcomplexrho)$ and
$c\in U$. If for all $n\in\N$ we have $f^{(n)}(c)=g^{(n)}(c)$ then
\[
\exists r\in\RCrealrho_{>0}:f\mid_{B_{r}(c)}=g\mid_{B_{r}(c)}.
\]
\end{lem}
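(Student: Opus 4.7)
Setting $h := f - g$, linearity of $\RCcomplexrho$-differentiation gives $h \in \ghf(U, \RCcomplexrho)$, and the hypothesis becomes $h^{(n)}(c) = 0$ in $\RCcomplexrho$ for every $n \in \N$. Unwinding this, for every representative $[c_\eps] = c$ and every $K \in \N$, we have $|h_\eps^{(n)}(c_\eps)| \le \rho_\eps^K$ for $\eps$ small (the smallness depending on $n$ and $K$). It suffices to produce $r \in \RCrealrho_{>0}$ such that $h(c+w) = 0$ for all $w \in B_r(0)$.

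The key input is Thm.~\ref{thm:functionofderivative} applied to $h$ at $c$. Parts \ref{enu:f_eps_locHol}, \ref{enu:epsDerAreModerate}, \ref{enu:r_epsLocLip} furnish $Q, R \in \R_{>0}$ and defining representatives $h_\eps \in \bigO(\Eball_{\rho_\eps^Q}(c_\eps))$ for $\eps$ small, with uniform-in-$n$ Cauchy estimates
\[
\left|\frac{h_\eps^{(n)}(c_\eps)}{n!}\right| \le \rho_\eps^{-nQ-R}
\]
and the classical Taylor expansion $h_\eps(c_\eps + w) = \sum_{n \ge 0} \frac{h_\eps^{(n)}(c_\eps)}{n!} w^n$ convergent on $\Eball_{\rho_\eps^Q}(0)$. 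I would take $r := \diff\rho^{Q+1}$, an infinitesimal radius strictly smaller than $\diff\rho^Q$, so that for any $w = [w_\eps] \in B_r(0)$ one has $|w_\eps| \le \rho_\eps^{Q+1}$ for $\eps$ small.

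Given a target $M \in \N$, I would split the Taylor series at $n = N := M + R + 1$. The tail is handled geometrically: $\big|\sum_{n \ge N} \tfrac{h_\eps^{(n)}(c_\eps)}{n!} w_\eps^n\big| \le \rho_\eps^{-R} \sum_{n \ge N} \rho_\eps^n \le 2\rho_\eps^{M+1}$ for $\eps$ small. The head has only $N$ terms, and for each fixed $n < N$ the negligibility of $h_\eps^{(n)}(c_\eps)$ lets us choose $\eps_n$ so small that $|h_\eps^{(n)}(c_\eps)/n!| \le \rho_\eps^M$; the minimum of these finitely many thresholds gives a single $\eps_0$ on which the head is bounded by $N\rho_\eps^M$. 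Combining, $|h_\eps(c_\eps + w_\eps)| \le (N+2)\rho_\eps^M$ for $\eps$ sufficiently small; as $M$ was arbitrary, $h(c+w) = 0$ in $\RCcomplexrho$.

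The main obstacle is reconciling two qualitatively different pieces of information: the uniform-in-$n$ moderateness bound $\rho_\eps^{-nQ-R}$, which only yields a geometric tail once $|w_\eps|$ lies a factor of $\rho_\eps$ inside the Euclidean radius of convergence $\rho_\eps^Q$ (hence the choice $r = \diff\rho^{Q+1}$), and the pointwise-in-$n$ negligibility of the derivatives, whose $\eps$-threshold may degenerate with $n$ but is harmless because only the finite head requires it. A conceptually cleaner proof via Taylor hyperseries, deferred to \cite{NuGi24b}, would identify $h(c+w)$ with $\hypersum{\rho}{\sigma} \frac{h^{(n)}(c)}{n!} w^n$ for an appropriate gauge $\sigma$; however, the direct $\eps$-wise split above is self-contained and uses only tools developed in the present paper.
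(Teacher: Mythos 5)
Your proof appears correct, and it is worth pointing out that the paper itself contains no proof of this lemma: it is explicitly deferred to the companion paper \cite{NuGi24b}, where it is meant to follow from the expansion of a GHF into its Taylor hyperseries $\hypersum{\rho}{\sigma}\frac{h^{(n)}(c)}{n!}w^{n}$ for a suitable second gauge $\sigma$. Your argument is therefore a genuinely different and self-contained route: setting $h:=f-g$, you bypass hyperseries entirely and work $\eps$-wise with the ordinary Taylor expansion of the holomorphic representatives $h_{\eps}=f_{\eps}-g_{\eps}$, using only Thm.~\ref{thm:functionofderivative}. The two key moves are sound: (a) shrinking the radius to $r=\diff\rho^{Q+1}$, one power of $\diff\rho$ inside the Euclidean radius $\rho_{\eps}^{Q}$ on which parts \ref{enu:f_eps_locHol}--\ref{enu:r_epsLocLip} of Thm.~\ref{thm:functionofderivative} give both holomorphy and the uniform Cauchy estimate $\left|h_{\eps}^{(n)}(c_{\eps})/n!\right|\le\rho_{\eps}^{-nQ-R}$, so that the tail of the series is dominated by $\rho_{\eps}^{-R}\sum_{n\ge N}\rho_{\eps}^{n}\le2\rho_{\eps}^{N-R}$; and (b) observing that the head has only finitely many terms, so the $n$-dependent $\eps$-thresholds coming from the negligibility of each net $\left(h_{\eps}^{(n)}(c_{\eps})\right)$ can be intersected into a single one. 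This is exactly the right way to reconcile the uniform-in-$n$ moderateness bound with the merely pointwise-in-$n$ negligibility. Two cosmetic points you should fix: since $R\in\R_{>0}$ need not be an integer, take $N:=M+\lceil R\rceil+1$; and you should state explicitly why $h=f-g$ is a GHF with defining net $(f_{\eps}-g_{\eps})$ and why $h^{(n)}(c)=\left[h_{\eps}^{(n)}(c_{\eps})\right]$ (parts \ref{enu:f_eps_locHol} and \ref{enu:derivativefunction2} of Thm.~\ref{thm:functionofderivative}), though both are immediate. What the hyperseries route of \cite{NuGi24b} buys is a radius of validity tied to the convergence set of the hyperseries and a uniform notion of equality of expansions; what your route buys is independence from any auxiliary gauge $\sigma$ and from the hyperseries machinery, at the price of a radius that is only guaranteed to be the infinitesimal $\diff\rho^{Q+1}$.
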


\noindent We can also think at the following theorem as a result that
does not depend on \cite{NuGi24b}, if we take the property stated
in this Lemma \ref{lem:identitythm} as a natural assumption.
\begin{thm}
\label{thm:identitythm}Let $U\subseteq\RCcomplexrho$ be an open
set and $f$, $g\in\ghf(U,\RCcomplexrho)$. Then the set 
\[
\mathcal{O}:=\emph{int}\left\{ z\in U\mid f(z)=g(x)\right\} 
\]
is clopen in the sharp topology.
\end{thm}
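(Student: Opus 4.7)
The plan is to verify the two halves of clopenness in the sharp topology on $U$ separately. Openness of $\mathcal{O}$ is immediate from the very definition as an interior. For closedness (in the subspace topology of $U$) it suffices to show that every $z\in U\cap\overline{\mathcal{O}}$ lies in $\mathcal{O}$, and my approach mimics the classical identity-principle argument: I would deduce that all generalized derivatives of $f-g$ vanish at $z$, and then invoke Lem.~\ref{lem:identitythm} to conclude that $f=g$ on a sharp ball around $z$.

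Fix $z\in U\cap\overline{\mathcal{O}}$ and $n\in\N$. By Thm.~\ref{thm:functionofderivative}.\ref{enu:derivativefunction2} together with linearity of $\RCcomplexrho$-differentiation, the function $f^{(n)}-g^{(n)}$ is itself a GHF on $U$, hence sharply continuous at $z$ by Thm.~\ref{thm:functionofderivative}.\ref{enu:GHF-GSF}. For each $q\in\N$, this continuity yields $r\in\RCrealrho_{>0}$ with $B_r(z)\subseteq U$ such that $|f^{(n)}(w)-g^{(n)}(w)-f^{(n)}(z)+g^{(n)}(z)|<\diff\rho^q$ for every $w\in B_r(z)$. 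Since $z\in\overline{\mathcal{O}}$, one can pick some $c\in\mathcal{O}\cap B_r(z)$; because $\mathcal{O}$ is open and sits inside $\{w\in U\mid f(w)=g(w)\}$, $f$ and $g$ coincide on a sharp neighbourhood of $c$, so all their derivatives at $c$ agree, in particular $f^{(n)}(c)=g^{(n)}(c)$. Taking $w=c$ above gives $|f^{(n)}(z)-g^{(n)}(z)|<\diff\rho^q$.

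Since $q\in\N$ was arbitrary, $f^{(n)}(z)=g^{(n)}(z)$ for every $n\in\N$. Lem.~\ref{lem:identitythm} then furnishes some $r\in\RCrealrho_{>0}$ with $f|_{B_r(z)}=g|_{B_r(z)}$; shrinking $r$ so that $B_r(z)\subseteq U$ (possible since $U$ is sharply open) yields $B_r(z)\subseteq\{w\in U\mid f(w)=g(w)\}$ and hence $z\in\mathcal{O}$, completing the argument. I do not foresee any genuine obstacle: the only step meriting some care is confirming that $f^{(n)}-g^{(n)}$ really is a GHF on all of $U$ (so that its sharp continuity at $z$ is at hand), but this follows cleanly from the algebraic rules for $\RCcomplexrho$-differentiation combined with Thm.~\ref{thm:functionofderivative}.\ref{enu:derivativefunction2} applied inductively.
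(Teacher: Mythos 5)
Your proposal is correct and follows essentially the same route as the paper: deduce from sharp continuity of the derivatives $f^{(n)}-g^{(n)}$ and from their vanishing at nearby points of $\mathcal{O}$ that $f^{(n)}(z)=g^{(n)}(z)$ for all $n$, then invoke Lem.~\ref{lem:identitythm} to place $z$ in $\mathcal{O}$. The only cosmetic difference is that the paper first reduces to $g=0$ by replacing $f$ with $f-g$, whereas you carry both functions along explicitly.
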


\begin{proof}
For simplicity, considering $f-g$, without loss of generality we
can assume $g=0$. We only have to show that the set $\mathcal{O}$
is closed in $U$. Assume that $c$ is in the closure of $\mathcal{O}$
in $U$, i.e.
\begin{equation}
c\in U\:\forall r\in\RCrealrho_{>0}\:\exists\bar{c}\in B_{r}(c)\cap\mathcal{O}.\label{eq:cisintheclosure}
\end{equation}
We have to prove that $c\in\mathcal{O}.$ We first note that for each
$\bar{c}\in\mathcal{O}$, we have $B_{p}(\bar{c})\subseteq\mathcal{O}$
for some $p\in\RCrealrho_{>0}$ and hence
\begin{equation}
f(\bar{z})=0\quad\forall\bar{z}\in B_{p}(\bar{c}).\label{eq:defO}
\end{equation}
Now, fix $n\in\N$ in order to prove that $f^{(n)}(c)=0$. From \eqref{eq:cisintheclosure},
for all $r\in\RCrealrho_{>0}$ we can find $\bar{c}_{r}\in B_{r}(c)\cap\mathcal{O}$
such that $f^{(n)}(\bar{c}_{r})=0$ from \eqref{eq:defO}. From sharp
continuity of $f^{(n)}$, we have $f^{(n)}(c)=\lim_{r\rightarrow0^{+}}f^{(n)}(\bar{c}_{r})=0$
for all $n\in\N$. Therefore, since $f\in\ghf(U,\RCcomplexrho)$ and
$c\in U$, by Lemma \ref{lem:identitythm}, we can hence find $\delta>0$
such that $f(z)=0$ for all $z\in B_{\delta}(c)$, i.e.~$c\in\mathcal{O}$.
\end{proof}
The lacking of a general identity theorem is hence a feature of GHF
because it enables the inclusion of a wide range of interesting generalized
functions, paving also the way for a more general version of the Cauchy-Kowalevski
theorem. Since functions in Sobolev spaces can be approximated using
Taylor series, see \cite{Boj,CaCi,Spe}, and GHF solutions of PDE
equals their Taylor hyperseries, we also hope to find a way to understand
when a GHF solution is actually the embedding of a function in a Sobolev
space.

\section*{Acknowledgements}

Financial support for an Ernst-Mach PhD grant (Reference number =
MPC-2021-00491) issued by the Federal Ministry of Education, Science
and Research (BMBWF) through the Austrian Agency for Education and
Internationalization (OeAD-Gmbh) within the framework ASEA-UNINET
(https://asea-uninet.org/) for S. Nugraheni is gratefully acknowledged.
This research was also funded in whole or in part by the Austrian
Science Fund (FWF) P34113, 10.55776/P33945, 10.55776/P33538. For open
access purposes, the author has applied a CC BY public copyright license
to any author-accepted manuscript version arising from this submission.


\begin{thebibliography}{10}
\bibitem{Aragona2012}J. Aragona, R. Fernandez, and S. Juriaans, Differential
calculus and integration of generalized functions over membranes,\textit{
}\textit{\emph{Monatshefte für Mathematik}}. 166 (2012) 1–18. \url{https://doi.org/10.1007/s00605-010-0275-z}

\bibitem{Aragona2005}J. Aragona, R. Fernandez, and S. O. Juriaans,
A discontinuous colombeau differential calculus, \textit{\emph{Monatshefte
für Mathematik}}. 144(1) (2005) 13–29. \url{https://doi.org/10.1007/s00605-004-0257-0}

\bibitem{Bag}F. Bagarello, Fourier transforms, fractional derivatives,
and a little bit of quantum mechanics, Rocky Mountain J. Math. 50(2)
(2020) 415-428. \url{https://doi.org/10.1216/rmj.2020.50.415}

\bibitem{BJ}F. B. Belgacem and P.-E. Jabin, Compactness for nonlinear
continuity equations, Journal of Functional Analysis. 264(1) (2013)
139–168. \url{https://doi.org/10.1016/j.jfa.2012.10.005}

\bibitem{BLL}C. Bertucci, J.-M. Lasry, and P.-L. Lions, (2024). On
lipschitz solutions of mean field games master equations. Journal
of Functional Analysis. 287(5),110486. \url{https://doi.org/10.1016/j.jfa.2024.110486}

\bibitem{Bet}F. Bethuel, (2023). Asymptotics for vectorial Allen–Cahn
type problems. Journées équations aux dérivées partielles, Talk no.
1, 16 p. \url{https://doi.org/10.5802/jedp.672}

\bibitem{Boj}B. Bojarski, Taylor expansion and Sobolev spaces, Bull.
Georgian Natl. Acad. Sci. 2(2) (2011) 5–10.

\bibitem{Brewster2018}R.A. Brewster and J. D. Franson, 2018. Generalized
delta functions and their use in quantum optics.\emph{ }\textit{\emph{Journal
of Mathematical Physics}}. 59(1), 012102. \url{https://doi.org/10.1063/1.4985938}

\bibitem{BrIsGi}A. Bryzgalov, K. Islami, and P. Giordano, Infinitesimal
and infinite numbers in mathematical physics (preprint). Under referees
evaluation for Nonlinear Dynamics, 2023. \url{https://doi.org/10.48550/arXiv.2401.08554}

\bibitem{CaCi}P. Cavaliere, A. Cianchi, Classical and approximate
Taylor expansions of weakly differentiable functions, Annales Academiæ
Scientiarum Fennicae Mathematica. 39 (2014) 527–544. \url{https://doi.org/10.5186/aasfm.2014.3933}

\bibitem{Colombeau1984b}J. Colombeau, \textit{New generalized functions
and multiplication of distributions}, North-Holland, Netherlands,
1984.

\bibitem{Colombeau1984}J. Colombeau and J. Galé, Holomorphic generalized
functions, \textit{\emph{Journal of Mathematical Analysis and Applications}}.
103(1) (1984) 117–133. \url{https://doi.org/10.1016/0022-247X(84)90162-8}

\bibitem{CKKR}N. Cutland, C. Kessler, E. Kopp, D. Ross, On Cauchy's
notion of infinitesimal, British J. Philos. Sci. 39(3) (1988) 375–378.
\url{https://doi.org/10.1093/bjps/39.3.375}

\bibitem{Dav}M. Davidson, (2023). Classical Charged Particle Models
Derived from Complex Shift Methods. Int J Theor Phys. 62, 154. \url{https://doi.org/10.1007/s10773-023-05411-y}

\bibitem{DHPV}A. Delcroix, M.F. Hasler, S. Pilipovic, V. Valmorin,
Embeddings of ultradistributions and periodic hyperfunctions in Colombeau
type algebras through sequence spaces, Mathematical Proceedings of
the Cambridge Philosophical Society. 137 (3) (2004) 697-708. \url{http://dx.doi.org/10.1017/S0305004104007923}

\bibitem{Delcroix2005}A. Delcroix. Remarks on the embedding of spaces
of distributions into spaces of Colombeau generalized functions. Novi
Sad Journal of Mathematics, 35 (2): 27-40, 2005.

\bibitem{Dir26}P.A.M. Dirac, The physical interpretation of the quantum
dynamics, Proc. R. Soc. Lond. A.\emph{ }113 (1927) 621–641. \url{https://doi.org/10.1098/rspa.1927.0012}

\bibitem{Ehr}P. Ehrlich, The Rise of non-Archimedean Mathematics
and the Roots of a Misconception I: The Emergence of non-Archimedean
Systems of Magnitudes, Arch. Hist. Exact Sci. 60 (2006) 1–121. \url{http://dx.doi.org/10.1007/s00407-005-0102-4}

\bibitem{Eva}L. Evans, Partial differential equations, American Mathematical
Society, Providence, R.I., 2010.

\bibitem{FrPS}V.P. Frolov and J. Pinedo Soto, 2023. Nonlocal Modification
of the Kerr Metric, Symmetry. 15(9), 1771. \url{http://dx.doi.org/10.3390/sym15091771}

\bibitem{Garetto2011}C. Garetto and H. Vernaeve, Hilbert C-modules:
structural properties and applications to variational problems, \textit{\emph{Transactions
of The American Mathematical Society}}. 363(4) (2011) 2047–2090. \url{http://dx.doi.org/10.1090/S0002-9947-2010-05143-8}

\bibitem{Gastao2022}S. Gastão, P. Giordano, A. Bryzgalov, and M.
Lazo, 2022. Calculus of variations and optimal control for generalized
functions. \textit{\emph{Nonlinear Analysis}}. 216, 112718, . \url{https://doi.org/10.1016/j.na.2021.112718}

\bibitem{Giordano2013} P. Giordano and M. Kunzinger, New topologies
on Colombeau generalized numbers and the Fermat Reyes theorem, J.
Math. Anal. Appl.. 399 (2013) 229–238. \url{http://doi.org/10.1016/j.jmaa.2012.10.005}

\bibitem{Giordano2016}P. Giordano and L. Baglini, Asymptotic gauges:
Generalization of Colombeau type algebras, \textit{\emph{Mathematische
Nachrichten}}. 289 (2016) 247-274. \url{https://doi.org/10.1002/mana.201400278}

\bibitem{Giordano2017}P. Giordano and M. Kunzinger, Inverse Function
Theorems for Generalized Smooth Functions, in: Oberguggenberger, M.,
Toft, J., Vindas, J., Wahlberg, P. (eds) Generalized Functions and
Fourier Analysis. Operator Theory: Advances and Applications, vol
260. Birkhäuser, Cham, 2017, pp. 95-114. \url{http://dx.doi.org/10.1007/978-3-319-51911-1_7}

\bibitem{Giordano2018}P. Giordano and M. Kunzinger, A convenient
notion of compact set for generalized functions. \textit{\emph{Proceedings
of the Edinburgh Mathematical Society}}, 61(1) (2018) 57–92. \url{https://doi.org/10.1017/S0013091516000559}

\bibitem{Giordano2015}P. Giordano, M. Kunzinger, H. Vernaeve, Strongly
internal sets and generalized smooth functions. \textit{\emph{Journal
of Mathematical Analysis and Applications}}, 422(1) (2015) 56–71.
\url{https://doi.org/10.1016/j.jmaa.2014.08.036}

\bibitem{Giordano2021}P. Giordano, M. Kunzinger, H. Vernaeve, A Grothendieck
topos of generalized functions I: basic theory, Dissertationes Mathematicae.
592 (2024) 74 pp. \url{http://dx.doi.org/10.4064/dm230920-7-3}

\bibitem{GiLu}P. Giordano, L. Luperi Baglini, H. Vernaeve, A Grothendieck
topos of generalized functions III: normal PDE (preprint). See \url{https://www.mat.univie.ac.at/~giordap7/ToposIII.pdf}.

\bibitem{GrMo}J.D. Gray and S.A. Morris, When is a Function that
Satisfies the Cauchy-Riemann Equations Analytic? The American Mathematical
Monthly. 85(4) (1978) 246-256. \url{https://doi.org/10.1080/00029890.1978.11994567}

\bibitem{Grosser2001}M. Grosser, M. Kunzinger, M. Oberguggenberger,
and R. Steinbauer, \textit{\emph{Geometric theory of generalized functions
with applications to general relativity}}\textit{, }\textit{\emph{Mathematics
And Its Applications}}\textit{ }(vol. 537), Kluwer Academic Publishers,
2001.

\bibitem{Hor90}L. Hörmander, \textit{\emph{The analysis of linear
partial differential operators}}\textit{, }vol. I, 2nd Ed., Springer-Verlag,
1990.

\bibitem{Kai}G. Kaiser, 2003. Physical wavelets and their sources:
Real physics in complex spacetime. J. Phys. A Mathematical Gen. 36(30),
R291. \url{https://doi.org/10.1088/0305-4470/36/30/201}

\bibitem{Kat-Tal12}M.G. Katz, D. Tall, A Cauchy-Dirac Delta Function,
Found Sci. 18 (2013) 107–123. \url{https://doi.org/10.1007/s10699-012-9289-4}

\bibitem{KeGi24}D. Kebiche, P. Giordano, Universal properties of
spaces of generalized functions (preprint). \url{https://doi.org/10.48550/arXiv.2404.15730}

\bibitem{Koblitz1996}N. Koblitz. p-adic Numbers, p-adic Analysis,
and Zeta-Functions, Graduate Texts in Mathematics (Book 58). Springer,
2nd Ed., 1996.

\bibitem{Lau89}D. Laugwitz, Definite values of infinite sums: aspects
of the foundations of infinitesimal analysis around 1820, Arch. Hist.
Exact Sci. 39(3) (1989) 195–245. \url{https://doi.org/10.1007/BF00329867}

\bibitem{LLG}A. Lecke, L. Luperi Baglini, P. Giordano, The classical
theory of calculus of variations for generalized functions, Advances
in Nonlinear Analysis. 8(1) (2019) 779-808. \url{https://doi.org/10.1515/anona-2017-0150}

\bibitem{Ler}J. Leray, Sur le mouvement d’un liquide visqueux emplissant
l’espace, Acta Math. 63 (1934) 193–248. \url{https://doi.org/10.1007/BF02547354}

\bibitem{Lin}I.V. Lindell, Delta function expansions, complex delta
functions and the steepest descent method, Am. J. Phys. 61 (1993)
438-42. \url{https://doi.org/10.1119/1.17238}

\bibitem{LPPZ}P. Luo, K. Pan, S. Peng, and Y. Zhou, 2023. Local uniqueness
of concentrated solutions and some applications on nonlinear Schrödinger
equations with very degenerate potentials. Journal of Functional Analysis.
284(12), 109921. \url{https://doi.org/10.1016/j.jfa.2023.109921}

\bibitem{Lu-Gi16}L. Luperi Baglini, P. Giordano, D. Kebiche, H. Vernaeve,
A Grothendieck topos of generalized functions II: ODE (preprint).
See \url{https://www.mat.univie.ac.at/~giordap7/ToposII.pdf}.

\bibitem{Marti1999}J.A. Marti, $(C,\eps,P)$-Sheaf Structures and
Applications, in: Oberguggenberger, M., Grosser, M., Kunzinger, M.,
\& Hormann, G. (1st Eds.), \textit{\emph{Nonlinear Theory of Generalized
Functions}}. Routledge, 1999, pp. 175-186. \url{http://dx.doi.org/10.1201/9780203745458-16}

\bibitem{MonT}M. Montenegro and E. V. Teixeira, Gradient estimates
for viscosity solutions of singular fully nonlinear elliptic equations,
Journal of Functional Analysis. 259(2) (2010) 428–452. \url{https://doi.org/10.1016/j.jfa.2010.03.021}

\bibitem{Mukhammadiev2021}A. Mukhammadiev, D. Tiwari, and G. Apaaboah,
Supremum, infimum and hyperlimits in the non-Archimedean ring of Colombeau
generalized numbers, Monatshefte für Mathematik. 196 (2021) 163–190.
\url{https://doi.org/10.1007/s00605-021-01590-0}

\bibitem{MTG}A. Mukhammadiev, D. Tiwari, P. Giordano, P., A Fourier
transform for all generalized functions. Accepted in Dissertationes
Mathematicae (2023).

\bibitem{NuGi24a}Nugraheni, S., Giordano, P., Path integration for
generalized holomorphic functions (preprint), 2024. See \url{http://ugm.id/GHF2}

\bibitem{NuGi24b}Nugraheni, S., Giordano, P., Hyper power series
for generalized holomorphic functions (preprint), 2024. See \url{http://ugm.id/GHF3}

\bibitem{Oberguggenberger2007}M. Oberguggenberger, S. Pilipović and
V. Valmorin, Global Representatives of Colombeau Holomorphic Generalized
Functions, \textit{\emph{Monatshefte für Mathematik.}} 151 (2007)
67–74. \url{https://doi.org/10.1007/s00605-006-0416-6}

\bibitem{Oberguggenberger2008}M. Oberguggenberger and H. Vernaeve,
Internal sets and internal functions in Colombeau theory, \textit{\emph{Journal
of Mathematical Analysis and Applications}}. 341(1) (2008) 649–659.
\url{https://doi.org/10.1016/j.jmaa.2007.10.030}

\bibitem{Juriaans2020}W.M. Rodrigues, A.R.G. Garcia, S.O. Juriaans,
J.C. Silva. Sets of Uniqueness for holomorphic Nets (preprint), 2020.
\url{http://dx.doi.org/10.13140/RG.2.2.36425.98405}

\bibitem{Sma}V.A. Smagin, 2014. Complex delta function and its information
application. Automatic Control and Computer Sciences. 48(1), 1016.
\url{https://doi.org/10.3103/S0146411614010064}

\bibitem{Spe}D. Spector, $L^{p}$-Taylor approximations characterize
the Sobolev space $W^{1,p}$, C.R. Acad. Sci. Paris, Ser. I. 353 (2015)
327–332. \url{https://doi.org/10.1016/j.crma.2015.01.010}

\bibitem{Tiwari2022}D. Tiwari and P. Giordano, Hyperseries in the
non-Archimedean ring of Colombeau generalized numbers, \textit{\emph{Monatshefte
für Mathematik}}, 197 (2022) 193–223. \url{https://doi.org/10.1007/s00605-021-01647-0}

\bibitem{Tiwari2023}D. Tiwari, A. Mukhammadiev and P. Giordano, Hyper-power
series and generalized real analytic functions,\textit{ }\textit{\emph{Monatshefte
für Mathematik}}. 203 (2024) 475–508. \url{https://doi.org/10.1007/s00605-023-01849-8}

\bibitem{Tre1967}F. Treves,  Topological Vector Spaces, Distributions
and Kernels, Academic Press, United Kingdom, 1967.

\bibitem{Vernaeve2008}H. Vernaeve, Generalized analytic functions
on generalized domains (preprint), 2008. \url{https://doi.org/10.48550/arXiv.0811.1521}

\bibitem{Win}S. Winitzki, The Dirac delta function cannot be applied
to complex arguments. See \url{https://www.academia.edu/32968278}

\bibitem{Zal}L. Zalcman, Real Proofs of Complex Theorems (and Vice
Versa), The American Mathematical Monthly. 81(2) (1974) 115-137. \url{https://doi.org/10.1080/00029890.1974.11993518}

\end{thebibliography}
\end{document}